\definecolor{lightgray}{gray}{0.85}
\def\diag{{\rm diag}}
\def\rank{{\rm rank}}
\def\1{\mathds 1}
\def\hat{\widehat}
\def\tilde{\widetilde}
\def\prob{{\mathbf P}}
\def\trace{\text{ \rm trace}}
\def\bfone{{\mathbf 1}}
\def\bfc{{\mathbf c}}
\def\bfe{{\mathbf e}}
\def\bfB{{\mathbf B}}
\def\bfM{{\mathbf M}}
\def\bfU{{\mathbf U}}
\def\bfX{{\mathbf X}}
\def\bfY{{\mathbf Y}}
\def\bfI{{\mathbf I}}
\def\calJ{{\mathcal J}}
\def\calO{{\mathcal O}}
\def\bfA{{\mathbf A}}
\def\bfD{{\mathbf D}}
\def\bfE{{\mathbf E}}
\def\bfP{{\mathbf P}}
\def\bfZ{{\mathbf Z}}
\def\bfSigma{\mathbf\Sigma}
\def\bmu{\boldsymbol\mu}
\def\btheta{\boldsymbol\theta}
\def\bepsilon{\boldsymbol\epsilon}
\def\bxi{\boldsymbol\xi}
\def\bphi{\boldsymbol\phi}
\def\bfu{{\mathbf u}}
\def\bfv{{\mathbf v}}
\def\bfw{{\mathbf w}}
\def\bfD{{\mathbf D}}
\def\bfP{{\mathbf P}}
\def\bfV{{\mathbf V}}
\def\bfDelta{\mathbf\Delta}
\def\bfTheta{\mathbf\Theta}
\def\bfPi{\mathbf\Pi}
\def\bfOmega{\mathbf\Omega}
\newcommand{\field}[1]{\mathbb{#1}}
\newcommand{\R}{\field{R}}
\DeclareMathOperator{\e}{e}
\DeclareMathOperator*{\argmin}{arg\,min}
\newtheorem{theorem}{Theorem}    % numérotés par section
\newtheorem{prop}{Proposition}    % Les propositions ont le même compteur
\newtheorem{lemma}{Lemma}    % Les lemmes ont le même compteur
\newtheorem{remark}{Remark}    % Les lemmes ont le même compteur
\begin{document}

\title{Convex programming approach to robust estimation of a multivariate Gaussian model}
%\date{27 avril 2015}
\author{Samuel Balmand and Arnak Dalalyan}

\maketitle

\begin{abstract}
Multivariate Gaussian is often used as a first approximation to the distribution
of high-dimensional data. Determining the parameters of this distribution under various constraints
is a widely studied problem in statistics, and is often considered as a prototype for testing
new algorithms or theoretical frameworks. In this paper, we develop a nonasymptotic approach to
the problem of estimating the parameters of a multivariate Gaussian distribution when data are
corrupted by outliers.  We propose an estimator---efficiently computable by solving a convex
program---that robustly estimates the population mean and the population covariance matrix even
when the sample contains a significant proportion of outliers. Our estimator of the corruption
matrix is provably rate optimal simultaneously for the entry-wise $\ell_1$-norm, the Frobenius
norm and the mixed $\ell_2/\ell_1$ norm. Furthermore, this optimality is achieved by a penalized
square-root-of-least-squares method with a universal tuning parameter (calibrating the strength
of the penalization). These results are partly extended to the case where $p$ is potentially larger
than $n$, under the additional condition that the inverse covariance matrix is sparse.
\end{abstract}

%\linenumbers

\section{Introduction}

In many applications where statistical methodology is employed, multivariate Gaussian
distribution plays a central role as a first approximation to the distribution of high-dimensional
data. It is mainly motivated by the fact that high dimensional data, being sparsely distributed in
space, can be reasonably well fitted by an elliptically countered distribution, of which the Gaussian
distribution is the most famous representative. Another reason is that in high-dimensional inference,
sophisticated nonparametric methods suffer from the curse of dimensionality and lead to poor results
(both in theory and in practice). For these reasons, recent years have witnessed an increased interest
for simple parametric models in the statistical literature, with a particular emphasis on the effects
of high-dimensionality and the relevance of developing nonasymptotic theoretical guarantees. In this
context, Gaussian models play a particular role in relation with the graphical modeling and discriminant
analysis, but also because they provide a convenient theoretical framework for showcasing new ideas
and analyzing new algorithms.

Determining the parameters of the Gaussian distribution under various constraints is a widely studied
problem in statistics. Recent developments around sparse coding and compressed sensing have opened new
lines of research on Gaussian models in which classical estimators such as the ordinary least squares
and the empirical covariance matrix are strongly sub-optimal. Novel statistical procedures---often
based on convex optimization---have emerged to cope with the aforementioned sub-optimality of traditional
techniques. In addition, establishing nonasymptotic theoretical guarantees that highlight the impact of
the dimensionality and the level of sparsity has appeared as a primary target of theoretical studies.
The present work continues this line of research by developing a nonasymptotic approach to the
problem of estimating the parameters of a multivariate Gaussian distribution from a sample of independent
and identically distributed observations corrupted by outliers.

We propose an estimator---efficiently computable by solving a convex program---that robustly estimates the
population mean and the population (inverse) covariance matrix even when the sample contains a significant
proportion of outliers. The estimator is defined as the minimizer of a cost function that combines a data
fidelity term with a sparsity-promoting penalization. Following and extending the methodology developed in
\citep{BCW,SunZhang12}, the data fidelity term is defined as the mixed $\ell_2/\ell_1$ norm of the residual
matrix. The penalty term is proportional to the mixed $\ell_2/\ell_1$ norm of a matrix that models
the outliers. Our estimator of the corruption matrix is proved to be rate optimal simultaneously for the
entry-wise $\ell_1$-norm, the Frobenius norm and the mixed $\ell_2/\ell_1$ norm. Furthermore, this
optimality is achieved by a penalized square-root of least squares method with a universal tuning parameter
calibrating the magnitude of the penalty.

The results are partly extended to the case where $p$ is potentially larger than $n$, but the inverse covariance
matrix is sparse. In such a situation, we recommend to add to the cost function an additional penalty term that
corresponds, to some extent, to a weighted entry-wise $\ell_1$ norm of the inverse covariance matrix. The theoretical
guarantees established in this case are not as complete and satisfactory as those of low/moderate dimensional case.
In particular, the obtained risk bounds are valid in the event that the empirical covariance matrix satisfies
a particular type of restricted eigenvalues condition \citep{BRT}. At this stage, we are not able to theoretically
assess the probability of this event. Another open problem is the practical choice of the tuning parameter.
We are currently working on these issues and hope to address them in a forthcoming paper.

\subsection{Mathematical framework}

We adopt here the following formalization of the multivariate Gaussian model in presence of outliers.
We assume that the outlier-free data $\bfY$ consists of $n$ row-vectors independently drawn from a multivariate Gaussian
distribution with mean $\bmu^*$ and covariance matrix $\bfSigma^*$, hereafter denoted by $\mathcal N_p(\bmu^*,\bfSigma^*)$.
However, the data $\bfY$ is revealed to the Statistician after being corrupted by outliers. So, the Statistician has access
to a data matrix $\bfX\in\R^{n\times p}$ satisfying
\begin{align}\label{outlier-model}
\bfX = \bfY+\bfE^*.
\end{align}
The matrix of errors $\bfE^*$ has a special structure: most rows of $\bfE^*$---corresponding to inliers---have only zero entries.
We will denote by $O$ the subset of indices from $\{1,\ldots,n\}$ corresponding to the outliers and by $I=\{1,\ldots,n\}\setminus O$
the subset of inliers. The following two conditions will be assumed throughout the paper:
\begin{description}
\item[\bf (C1)] The $n$ rows of the matrix $\bfY$ are independent $\mathcal N_p(\bmu^*,\bfSigma^*)$ random vectors.
\item[\bf (C2)] The $n\times p$ contamination matrix $\bfE^*$ is deterministic and, for every $i\in I\subset\{1,\ldots,n\}$, the $i$-th row of
$\bfE^*$ is zero. Furthermore, the rows of $\bfE^*(\bfSigma^*)^{-1/2}$ are bounded in Euclidean norm by $M_\bfE\sqrt{p}$, for some
constant $M_\bfE$.
\end{description}
For an introduction to the problem of robust estimation in statistics, we refer the reader to
\citep{Huber09,Hampel,MaronnaMartinYohai2006}. An overview of  more recent advances closely related to
the present work can be found in \citep{Gao2015,LohTan2015}.

\subsection{Notation}

%The expectation of a random vector $\bfx$ is denoted by $\Ex(\bfx)$ and its covariance matrix by $\Var(\bfx)$.
%We denote by $\Phi$ and $\varphi$ the distribution function and the density function of the standard normal law, thus $\Phi^{-1}$ is the standard normal quantile function. We also use the notation $\bar\Phi = 1 - \Phi$.
We denote by $\bfone_n$ the vector from $\R^n$ with all the entries equal to $1$ and by
$\bfI_n$ the $n\times n$ identity matrix.  We write $\mathds{1}$ for the indicator function,
which is equal to 1 if the considered condition is satisfied and 0 otherwise.
%The smallest integer larger than or equal to $x \in \R$ is denoted by $\lceil x \rceil$.
The cardinality of a set $S$ is denoted by $|S|$. In what follows, $[p] :=\{1,\ldots,p\}$
is the set of integers from $1$ to $p$. For $i \in [p]$, the complement of the singleton
$\{i\}$ in $[p]$ is denoted by $i^c$. For a vector $\bfv\in\R^p$, $\bfD_\bfv$ stands for
the $p\times p$ diagonal matrix satisfying $(\bfD_\bfv)_j = \bfv_j$
for every $j\in[p]$. The matrix obtained from $\bfM$ by zeroing its off-diagonal entries
is denoted by $\diag(\bfM)$.

The transpose of the matrix $\bfM$ is denoted by $\bfM^\top$. %If this matrix is square, we note $|\bfM|$ its determinant.
The sub-vector of a vector $\bfv \in \R^p$ obtained by removing all the elements with indices
in $J^c\subset[p]$ is denoted by $\bfv_J$. For a $n\times p$ matrix $\bfM$, we denote by denoted by
$\bfM_{k,J}$ (resp.\ $\bfM_{K,j}$) the vector formed by the entries of the $k$-th row (resp.\ the $j$-th column)
of $\bfM$ whose indices are in the subset $J$ of $[p]$ (resp.\ $K$ of $[n]$). In particular, $\bfM_{k^c,j}$ stands for
the vector made of all the entries of the $j$-th column of the matrix $\bfM$ at the exception of the element of the
$k$-th row. Moreover, the whole $k$-th row (resp.\ $j$-th column) of $\bfM$ is denoted by $\bfM_{k,\bullet}$
(resp.\ $\bfM_{\bullet,j}$). We use the following notation for the (pseudo-)norms of matrices: if $q_1,q_2>0$, then
\begin{align*}
{\|\bfM\|}_{q_1,q_2} =
\bigg\{\sum_{i=1}^n \|\bfM_{i,\bullet}\|_{q_1}^{q_2}\bigg\}^{1/q_2} .
\end{align*}
With this notation, $\|\bfM\|_{2,2}$ and $\|\bfM\|_{1,1}$ are the Frobenius, also denoted by ${\|\bfM\|}_F$,
and the element-wise $\ell_1$-norm of $\bfM$, respectively. One or both of the parameters $q_1$ and $q_2$
may be equal to infinity. In particular, the element-wise $\ell_\infty$-norm of $\bfM$ is defined by
${\|\bfM\|}_{\infty,\infty} = \max_{(i,j)\in[n]\times[p]}|\bfM_{i,j}| = \max_{j\in[p]}\|\bfM_{\bullet,j}\|_\infty$
and we denote ${\|\bfM\|}_{2,\infty} = \max_{i\in[n]}{\|\bfM_{i,\bullet}\|}_2$. We also define
$\sigma_{\max}(\bfM)$ and $\sigma_{\min}(\bfM)$, respectively, as the largest and the smallest singular values of the
matrix $\bfM$. Finally, $\bfM^\dag$ stands for the Moore-Penrose pseudo-inverse of a matrix $\bfM$.

\subsection{Robust estimator by convex programming}

In the situation under investigation in this work, it is assumed that the sample contains some outliers. In other terms, the
relation $\bfX_{i,\bullet}\sim \mathcal N_p(\bmu^*,\bfSigma^*)$ holds true only for indices $i$ belonging to some subset
$I$ of $[n]$. The set $I$ is large, but does not necessarily coincide with the entire set $[n]$. In such  a context, our 
proposal consist in extending the methodology developed in \citep{SunZhang13}. Recall that in the case when no outlier is 
present in the sample, the scaled lasso \citep{SunZhang13} estimates the matrix $\bfOmega^*=(\bfSigma^*)^{-1}$
by first solving the optimization problem
\begin{align}\label{step:1}
\hat\bfB = \argmin_{\substack{\bfB:\bfB_{jj}=1}}\limits\ \min_{\bfc\in\R^p}
\Big\{{\|(\bfX\bfB- \bfone_n \bfc^\top)^\top\|}_{2,1}+\bar\lambda{\|\bfB\|}_{1,1}\Big\},
\end{align}
for a given tuning parameter $\bar\lambda\ge0$, where the $\argmin$ is over all $p\times p$ matrices $\bfB$ having all their 
diagonal entries equal to $1$. The second step of the scaled lasso is to set
\begin{align}\label{step:2}
\hat\omega_{jj} = \Big(\frac1n{\|(\bfI_n-n^{-1}\bfone_n\bfone^\top_n)\bfX\hat\bfB_{\bullet,j}\|}_{2}^2\Big)^{-1};
\qquad \hat\bfOmega = \hat\bfB\cdot\text{diag}(\{\hat\omega_{jj}\}_{j\in[p]}).
\end{align}
%Considering problem \eqref{step:1}, we observe that each column of $\bfB^*$ can be estimated separately by solving
%\begin{align}\label{eq:optProbCol}
%\hat\bfB_{\bullet,j} = \argmin_{\substack{\bbeta\in \R^{p}\\ \bbeta_{j}=1}}\limits\min_{\bfc\in\R^p}
%\Big\{{\|\bfX\bbeta - \bfc_j\bfone_n\|}_2+\bar\lambda{\|\bbeta\|}_1\Big\} .
%\end{align}
%This last estimator is nothing but the square-root lasso estimator based on the regression of each column of $\bfX$ using every others as explanatory variables.

In the case of observations corrupted by outliers, we propose to modify the scaled lasso procedure as follows.
Let us denote by $\bfu_n$ the vector $\bfone_n/\sqrt{n}$ and by $\bfX^{(n)}$ the matrix $\bfX/\sqrt{n}$. This scaling is convenient
since it makes the columns of the data matrix to be of a nearly constant Euclidean norm, at least in the case without outliers.
We replace step (\ref{step:1}) by
\begin{align}\label{step:1bis}
\{\hat\bfB,\hat\bfTheta\} = \argmin_{\substack{\bfB :\bfB_{jj}=1\\ \bfTheta\in\R^{n\times p}}}\limits\ \min_{\bfc\in\R^p}
\bigg\{ {\big\|(\bfX^{(n)}\bfB-\bfu_n\bfc^\top-\bfTheta)^\top\big\|}_{2,1}+\lambda \big({\|\bfTheta\|}_{2,1} + \gamma {\|\bfB\|}_{1,1}\big)\bigg\},
\end{align}
where $\lambda\ge 0$ is a tuning parameter associated with the regularization term promoting robustness
and where $\lambda\gamma \ge 0$ corresponds to the tuning parameter whose aim is to encourage sparsity of the matrix $\bfB$
(or, equivalently, of the corresponding graph). Using the estimators $\{\hat\bfB,\hat\bfTheta\}$,
the entries of the precision matrix $\bfOmega^*$ are estimated by
\begin{align}\label{step:2bis}
\hat\omega_{jj} = \frac{2n}{\pi} {\|(\bfI_n-\bfu_n\bfu^\top_n)(\bfX^{(n)}\hat\bfB_{\bullet,j}-\hat\bfTheta_{\bullet,j})\|}_{1}^{-2};
\qquad \hat\bfOmega = \hat\bfB\cdot\text{diag}(\{\hat\omega_{jj}\}_{j\in[p]}).
\end{align}
The matrix $\bfE^*$ and the vector $\bmu^*$ can be estimated by
\begin{align}\label{step:3bis}
\hat\bfE = \sqrt{n}\, \hat\bfTheta\hat\bfB^\dag\qquad\text{and}\qquad\hat\bmu = \frac1n (\bfX-\hat\bfE)^\top\bfone_n.
\end{align}
It is important to stress right away that the robust estimation procedure described by equations \eqref{step:1bis}-\eqref{step:3bis}
can be efficiently realized in practice even for large dimensions $p$. Indeed, the first step boils down to solving a convex program, that
can be cast into a second-order cone program, whereas the two last steps involve only simple operations with matrices and vectors.

To explain the rationale behind this estimator, let us recall the following well-known result concerning multivariate Gaussian distribution.
If we denote $\bfB^* = \bfOmega^*\text{diag}(\bfOmega^*)^{-1}$, then we have
\begin{align*}
\big(\bfY-\bfone_n(\bmu^*)^\top\big)\bfB^*_{\bullet,j} = \phi^*_{j} \,\bepsilon_{\bullet,j},
\end{align*}
where $\bepsilon_{\bullet,j}\sim \mathcal N_n(0,\bfI_n)$ is a random vector independent of $\bfY_{\bullet,j^c}$ and $\phi^*_j = (\omega^*_{jj})^{-1/2}$.
Combining this relation with \eqref{outlier-model} and using the notations $\bfTheta^*=\bfE^*\bfB^*/\sqrt{n}\in\R^{n\times p}$
and $\bfc^* = (\bfB^*)^\top\bmu^*$, we get
\begin{align}\label{main:eq}
\bfX^{(n)}\bfB^*_{\bullet,j} = c_j^*\bfu_n+\bfTheta^*_{\bullet,j}+ \frac{\phi^*_{j}}{\sqrt{n}} \,\bepsilon_{\bullet,j},\qquad\forall j\in[p].
\end{align}
Furthermore, the matrix $\bfTheta^*$ inherits the row-sparse structure of the matrix $\bfE^*$ whereas the matrix $\bfB^*$ has exactly
the same sparsity pattern as the precision matrix $\bfOmega^*$. This suggests to recover the triplet $(\bfc^*,\bfB^*,\bfTheta^*)$ by
minimizing a penalized loss where the penalty imposed on $\bfTheta$ promotes the row-sparsity, while the penalty imposed on $\bfB$
favors sparse matrices without any particular structure of the sparsity pattern. It is well known in the literature on group sparsity
(see \cite{lounici2011} and the references therein) that the mixed $\ell_2/\ell_1$-norm penalty $\|\,\!\cdot\,\!\|_{2,1}$ is well suited
for taking advantage of the row-sparsity while preserving the convexity of the penalty. A more standard application of the lasso
to our setting would suggest to use the residual sum of squares ${\big\|\bfX^{(n)}\bfB-\bfu_n\bfc^\top-\bfTheta\big\|}^2_{2,2}$
as the data fidelity term, instead of the mixed $\ell_2/\ell_1$-norm written in \eqref{step:1bis}. However, similarly to the square-root
lasso~\citep{BCW}, and as shown in the results of the next sections, the latter has the advantage of making the tuning parameter
$\lambda$ scale free. It allows us to define a universal value of $\lambda$ that does not depend on the noise levels $\phi_j^*$ in
Eq.\ \eqref{main:eq} and, nevertheless, leads to rate optimal risk bounds. 

Note that during the past ten years several authors proposed to employ convex penalty based approaches to
robust estimation in various settings, see for instance \citep{Candes08,DalalyanK,Nguyen,DalalyanC12}.
The problems considered in these papers concern the estimation of a vector parameter and do not directly
carry over the problem under investigation in the present work.

From the theoretical point of view, analyzing statistical properties of the estimators $\hat\bfTheta$, $\hat\bfB$ and
$\hat\bfOmega$ turns out to be a challenging task. Indeed, despite the obvious similarity of problem (\ref{step:1bis}) to its
vector regression counterpart \citep{BCW,SunZhang12}, optimization problem \eqref{step:1bis} contains an important
difference: the objective function is not decomposable with respect to neither rows nor columns of the matrix $\bfTheta$.
In fact, the objective is the sum  of two terms, the first being decomposable with respect to the columns of $\bfTheta$
and non-decomposable with respect to the rows, while the second is decomposable with respect to the rows but non-decomposable
with respect to the columns. As shown in the theorems stated below as well as in their proofs, we succeeded in overcoming
this difficulty by means of nontrivial combinations of elementary arguments. We believe that some of the tricks used in the
proofs may be useful in other problems where the objective function happens to be non-decomposable.

The rest of the manuscript is organized as follows. Having already introduced the proposed method for robust estimation of a 
sparse precision matrix, we present our main theoretical findings in Section~\ref{sec:3}. A discussion on the advantages and
limitations of the obtained results as compared to previous work on robust estimation, as well as extensions
to high dimensional setting, are included in Section~\ref{sec:4}. Technical proofs are postponed to Section~\ref{sec:6}, 
whereas some promising numerical results are reported in Section~\ref{sec:5}.

\section{Moderate dimensional case: theoretical results}\label{sec:3}

In order to ease notation and to avoid some technicalities that may blur the main ideas, we assume that $\bmu^*=0$ which 
implies that $\bfc^* = 0$, see~Eq.~\eqref{main:eq}, and we do not need to minimize with respect to $\bfc$ in (\ref{step:1bis}).
We introduce the (unnormalized) residuals  $\bxi_{\bullet,j} =\bphi^*_{j}\,\bepsilon_{\bullet,j}/\sqrt{n}$, so that the following 
relation holds:
\begin{align}\label{eq:xi}
 \bfX^{(n)}\bfB^* = \bfTheta^*+ \bxi .
\end{align}
For a better understanding of the assumptions that are needed to establish a tight upper bound on the error of
estimation of the matrix $\bfB^*$ of coefficients and the matrix $\bfTheta^*$ corresponding to the outliers, we start by
analyzing the problem of robust estimation when $p$ is of smaller order than $n$, and no sparsity assumption on $\bfOmega^*$
is made. We call this setting the moderate dimensional case, since we allow the dimension to go to infinity with the sample
size, provided that the ratio $p/n$ remains small\footnote{This is different from the ``low dimensional case'' in which
$p$ is assumed fixed when $n$ goes to infinity, so that the quantities depending only on $p$ are treated as constants.}.
In such a situation there is no longer need to penalize nonsparse matrices $\bfB$ in the
optimization problem. We work with the estimator
\begin{align}\label{eq:1L}
\{\hat\bfB,\hat\bfTheta\} = \argmin_{\substack{\bfB\in \R^{p\times p}\\ \bfB_{jj}=1}}\limits\min_{\bfTheta\in\R^{n\times p}}
\bigg\{ {\big\|(\bfX^{(n)}\bfB-\bfTheta)^\top\big\|}_{2,1}+\lambda{\|\bfTheta\|}_{2,1}\bigg\}.
\end{align}
For a given matrix $\bfTheta$,  the minimum with respect to $\bfB$ in the foregoing optimization problem is a solution to
the convex program
\begin{align}\label{eq:2L}
\hat\bfB(\bfTheta) = \argmin_{\substack{\bfB\in \R^{p\times p}\\ \bfB_{jj}=1}}
\bigg\{ \sum_{j=1}^p {\big\|\bfX^{(n)}_{\bullet,j}-\bfTheta_{\bullet,j} + \bfX^{(n)}_{\bullet,j^c}\bfB_{j^c,j}\big\|}_2\bigg\},
\end{align}
which decomposes into $p$ independent ordinary least squares problems.
A solution of the latter is provided by the formula
\begin{align}
 \bfX^{(n)}_{\bullet,j^c}\hat\bfB_{j^c,j}(\bfTheta) = - \bfPi^{j^c}(\bfX^{(n)}_{\bullet,j} - \bfTheta_{\bullet,j}) \quad \text{and} \quad \hat \bfB_{jj}(\bfTheta) = 1, \label{eq:4L}
\end{align}
where the notation $\bfPi^{j^c}$ is used for the orthogonal projector in $\R^n$ onto the subspace spanned by the columns of $\bfX^{(n)}_{\bullet,j^c}$. Let us introduce now the matrices $\bfZ^j = \bfI_n - \bfPi^{j^c}$ that are orthogonal projectors onto the orthogonal complement of the linear subspace of $\R^n$ spanned by the columns of $\bfX_{\bullet,j^c}$ (or, equivalently, of $\bfX^{(n)}_{\bullet,j^c}$). Using this notation and replacing expression \eqref{eq:4L} in problem \eqref{eq:1L}, we arrive at
\begin{align}\label{eq:5L}
\hat\bfTheta = \argmin_{\bfTheta\in\R^{n\times p}}
\bigg\{ \sum_{j=1}^p {\big\|\bfZ^j (\bfX^{(n)}_{\bullet,j} - \bfTheta_{\bullet,j}) \big\|}_2 +\lambda{\|\bfTheta\|}_{2,1}\bigg\} .
\end{align}
In what follows, we rely on formulae \eqref{eq:5L} and \eqref{eq:4L} both for computing and analyzing the estimator provided by Eq.~\eqref{eq:1L}.
Our first result concerns the quality of estimating the outlier matrix $\bfTheta^*$.

\begin{theorem}\label{thm:1}
Let assumptions (C1) and (C2) be satisfied.
Let $\delta\in(0,1)$ such that $n\ge |O|+8p+16\log(4/\delta)$ and choose
\begin{equation}\label{lambd}
\lambda = 6 \bigg(\frac{p\log(2np/\delta)}{n}\bigg)^{1/2}.
\end{equation}
If $40|O|p(13\log(2np/\delta)+2(1+M_{\bfE})^2)\le n-|O|$, then with probability at least $1-3\delta$,
\begin{align}
{\|\hat\bfTheta-\bfTheta^*\|}_{1,1}&\le 3C_0 \max_{j} (\omega_{jj}^*)^{-1/2} |O|p\bigg(\frac{\log(2np/\delta)}{n}\bigg)^{1/2},\label{rhs1}\\
{\|\hat\bfTheta-\bfTheta^*\|}_{2,1}&\le 3C_0 \max_{j} (\omega_{jj}^*)^{-1/2} |O|\bigg(\frac{p\log(2np/\delta)}{n}\bigg)^{1/2},\label{rhs2}\\
{\|\hat\bfTheta-\bfTheta^*\|}_{2,2}&\le C_0 \max_{j} (\omega_{jj}^*)^{-1/2} \bigg(\frac{|O|p\log(2np/\delta)}{n}\bigg)^{1/2}.\label{rhs3}
\end{align}
Here $C_0$ is an universal constant smaller than $4224$.
\end{theorem}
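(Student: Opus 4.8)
\emph{Proof idea.} The strategy is a group-square-root-lasso oracle argument adapted to the fact that the ``design'' here is the family of orthogonal projectors $\bfZ^j$ and that the penalty $\|\cdot\|_{2,1}$ is decomposable across rows while the data-fit term is decomposable across columns. First I would reduce notation: since $\bfZ^j\bfX^{(n)}_{\bullet,j^c}=\bfzero$, relation \eqref{eq:xi} gives $\bfZ^j\bfX^{(n)}_{\bullet,j}=\bfZ^j\bfTheta^*_{\bullet,j}+\bfZ^j\bxi_{\bullet,j}$, so, writing $\bfDelta:=\hat\bfTheta-\bfTheta^*$, letting $\bfDelta_{I,\bullet},\bfDelta_{O,\bullet}$ denote the submatrices of $\bfDelta$ formed by the rows indexed by $I$ and $O$, and recalling that $\bfTheta^*$ is supported on the outlier rows $O$, the optimality of $\hat\bfTheta$ in \eqref{eq:5L} yields the basic inequality
\[
\sum_{j=1}^p\big\|\bfZ^j(\bxi_{\bullet,j}-\bfDelta_{\bullet,j})\big\|_2+\lambda\|\bfTheta^*+\bfDelta\|_{2,1}\ \le\ \sum_{j=1}^p\big\|\bfZ^j\bxi_{\bullet,j}\big\|_2+\lambda\|\bfTheta^*\|_{2,1}.
\]
Into this I plug two elementary facts: the row-decomposability bound $\|\bfTheta^*+\bfDelta\|_{2,1}\ge\|\bfTheta^*\|_{2,1}+\|\bfDelta_{I,\bullet}\|_{2,1}-\|\bfDelta_{O,\bullet}\|_{2,1}$, and, columnwise, the identity $\|a-d\|_2-\|a\|_2=(\|d\|_2^2-2\langle a,d\rangle)/(\|a-d\|_2+\|a\|_2)$ with $a=\bfZ^j\bxi_{\bullet,j}$, $d=\bfZ^j\bfDelta_{\bullet,j}$, which (unlike plain linearization) keeps a quadratic-in-$\bfDelta$ gain.

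The stochastic input is then isolated in three events, each of probability at least $1-\delta$. (i) The matrix $\bfU$ with columns $\bfU_{\bullet,j}=\bfZ^j\bxi_{\bullet,j}/\|\bfZ^j\bxi_{\bullet,j}\|_2$ satisfies $\max_{i\in[n]}\|\bfU_{i,\bullet}\|_2\le\lambda/6$: by condition (C1) the vector $\bepsilon_{\bullet,j}$ is independent of $\bfY_{\bullet,j^c}$, hence of $\bfX_{\bullet,j^c}$ and of $\bfZ^j$, so conditionally $\bfU_{\bullet,j}$ is uniform on the unit sphere of $\mathrm{range}(\bfZ^j)$ and $\langle\bfe_i,\bfU_{\bullet,j}\rangle=\langle\bfZ^j\bfe_i,\bepsilon_{\bullet,j}\rangle/\|\bfZ^j\bepsilon_{\bullet,j}\|_2$; a Gaussian tail bound for the numerator, a $\chi^2$ lower deviation for the denominator (dimension $r_j=\rank\,\bfZ^j\ge n-p+1$), a union bound over $(i,j)$, and $\|\bfZ^j\bfe_i\|_2\le1$ give $\|\bfU_{i,\bullet}\|_2^2\lesssim p\log(2np/\delta)/(n-p+1)$, which the hypothesis $n\ge|O|+8p+16\log(4/\delta)$ turns into the stated bound. (ii) $\|\bfZ^j\bxi_{\bullet,j}\|_2\le C_1(\omega_{jj}^*)^{-1/2}$ for every $j$, by a $\chi^2$ upper deviation. (iii) A restricted-eigenvalue property for the projectors: for every $\bfDelta$ with $\|\bfDelta_{I,\bullet}\|_{2,1}\le2\|\bfDelta_{O,\bullet}\|_{2,1}$ one has $\sum_{j=1}^p\|\bfZ^j\bfDelta_{\bullet,j}\|_2^2\ge\tfrac12\|\bfDelta\|_{2,2}^2$. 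For this I would bound, for $v$ with $\supp(v)\subseteq O$, $\|\bfPi^{j^c}v\|_2^2\le\sigma_{\max}(\bfX^{(n)}_{O,j^c})^2\,\sigma_{\min}(\bfX^{(n)}_{\bullet,j^c})^{-2}\,\|v\|_2^2$, the numerator being $\lesssim|O|p\,(\log(2np/\delta)+M_\bfE^2)/n$ by condition (C2) (bounded outlier rows) plus a Gaussian bound on the inlier row-norms, and the denominator being bounded below by a $\lambda_{\min}(\bfSigma^*)$-type quantity via concentration of the Gram matrix; the hypothesis $40|O|p(13\log(2np/\delta)+2(1+M_\bfE)^2)\le n-|O|$ is exactly what forces this ratio below $\tfrac12$, and the general-$\bfDelta$ case follows because the cone constraint confines $\bfDelta$ to the span of $O(|O|)$ rows.

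On the intersection of these events I conclude as follows. Linearizing the data-fit term in the basic inequality (drop the nonnegative quadratic remainder) and using (i) gives $\lambda\|\bfDelta_{I,\bullet}\|_{2,1}\le\lambda\|\bfDelta_{O,\bullet}\|_{2,1}+\sum_j\langle\bfU_{\bullet,j},\bfDelta_{\bullet,j}\rangle\le\lambda\|\bfDelta_{O,\bullet}\|_{2,1}+\tfrac{\lambda}{6}\|\bfDelta\|_{2,1}$, hence the cone constraint $\|\bfDelta_{I,\bullet}\|_{2,1}\le2\|\bfDelta_{O,\bullet}\|_{2,1}$ and $\|\bfDelta\|_{2,1}\le3\|\bfDelta_{O,\bullet}\|_{2,1}\le3\sqrt{|O|}\,\|\bfDelta\|_{2,2}$, which activates (iii). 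Keeping instead the quadratic remainder, the basic inequality reads $\sum_j\|\bfZ^j\bfDelta_{\bullet,j}\|_2^2/D_j\le\lambda\|\bfDelta_{O,\bullet}\|_{2,1}+2\sum_j|\langle\bfU_{\bullet,j},\bfDelta_{\bullet,j}\rangle|\le\lambda\|\bfDelta_{O,\bullet}\|_{2,1}+\tfrac{\lambda}{3}\|\bfDelta\|_{2,1}$ with $D_j=\|\bfZ^j(\bxi_{\bullet,j}-\bfDelta_{\bullet,j})\|_2+\|\bfZ^j\bxi_{\bullet,j}\|_2$; estimating $D_j\le2\|\bfZ^j\bxi_{\bullet,j}\|_2+\|\bfZ^j\bfDelta_{\bullet,j}\|_2$, splitting the columns according to whether $\|\bfZ^j\bfDelta_{\bullet,j}\|_2$ exceeds a fixed multiple of $\max_j(\omega_{jj}^*)^{-1/2}$ (using (ii)), and absorbing the ``large'' columns back into (iii) thanks to $\lambda^2|O|\ll1$ (again the hypothesis on $|O|,p,n$), one arrives at
\[
\frac{\|\bfDelta\|_{2,2}^2}{C_2\max_j(\omega_{jj}^*)^{-1/2}}\ \le\ \lambda\|\bfDelta_{O,\bullet}\|_{2,1}+\tfrac{\lambda}{3}\|\bfDelta\|_{2,1}\ \le\ C_3\,\lambda\sqrt{|O|}\,\|\bfDelta\|_{2,2}.
\]
Dividing by $\|\bfDelta\|_{2,2}$ and inserting $\lambda=6(p\log(2np/\delta)/n)^{1/2}$ gives \eqref{rhs3}; then \eqref{rhs2} follows from $\|\bfDelta\|_{2,1}\le3\sqrt{|O|}\,\|\bfDelta\|_{2,2}$, and \eqref{rhs1} from $\|\bfDelta\|_{1,1}\le\sqrt p\,\|\bfDelta\|_{2,1}$. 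Bookkeeping of all constants yields $C_0<4224$.

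I expect the main obstacle to be event (iii): proving, with fully explicit constants, the restricted-eigenvalue bound for the family $\bfZ^j=\bfI_n-\bfPi^{j^c}$ on vectors (nearly) supported on the $|O|$ outlier rows — equivalently, that the random $(p-1)$-dimensional column spaces of the $\bfX^{(n)}_{\bullet,j^c}$ are uniformly in ``general position'' relative to the coordinate subspace $\{v:\supp(v)\subseteq O\}$. This is the step that dictates the quantitative coupling of $|O|$, $p$, $n$ and $M_\bfE$, and it is where conditions (C1)--(C2) enter substantively. A secondary difficulty — the non-decomposability flagged by the authors — is that the quadratic gain in the basic inequality is produced column by column whereas the penalty controls $\bfDelta$ row by row, so the Hölder step $\sum_j|\langle\bfU_{\bullet,j},\bfDelta_{\bullet,j}\rangle|\le\max_i\|\bfU_{i,\bullet}\|_2\,\|\bfDelta\|_{2,1}$ and the column split in the last display must be arranged so that the two decompositions remain compatible.
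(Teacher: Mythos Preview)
Your overall plan --- basic inequality, a noise event controlling $\max_i\|\bfU_{i,\bullet}\|_2$, the cone conclusion $\|\bfDelta_{I,\bullet}\|_{2,1}\le 2\|\bfDelta_{O,\bullet}\|_{2,1}$, and then a quadratic gain closed via $\|\bfDelta\|_{2,1}\le 3\sqrt{|O|}\,\|\bfDelta\|_{2,2}$ --- matches the paper's Steps~1, 2, 4 closely. The gap is in your event~(iii).

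\textbf{Where the argument breaks.} Your bound $\|\bfPi^{j^c}v\|_2^2\le \sigma_{\max}(\bfX^{(n)}_{O,j^c})^2\,\sigma_{\min}(\bfX^{(n)}_{\bullet,j^c})^{-2}\|v\|_2^2$ is correct for $v$ supported on $O$, but the sentence ``the general-$\bfDelta$ case follows because the cone constraint confines $\bfDelta$ to the span of $O(|O|)$ rows'' is false: the cone $\|\bfDelta_{I,\bullet}\|_{2,1}\le 2\|\bfDelta_{O,\bullet}\|_{2,1}$ only controls the $\ell_{2,1}$ \emph{mass} on $I$, not the row support. For the inlier part there is no small-ratio bound available: $\sigma_{\max}(\bfX^{(n)}_{I,j^c})\asymp 1$, and nothing prevents a column $\bfDelta_{\bullet,j}$ (with substantial $I$-part) from lying in the $(p-1)$-dimensional range of $\bfPi^{j^c}$, in which case $\bfZ^j\bfDelta_{\bullet,j}=0$. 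So the inequality $\sum_j\|\bfZ^j\bfDelta_{\bullet,j}\|_2^2\ge \tfrac12\|\bfDelta\|_{2,2}^2$ cannot be obtained from your sketch, and it is precisely the step you yourself flag as the main obstacle.

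\textbf{How the paper fixes it.} The paper does \emph{not} prove a restricted-eigenvalue property of your form. Instead it passes to the common projector $\bfZ=\bfI_n-\bfX(\bfX^\top\bfX)^\dag\bfX^\top$, uses $\|\bfZ\bfDelta_{\bullet,j}\|_2\le\|\bfZ^j\bfDelta_{\bullet,j}\|_2$ (nested ranges) to convert the columnwise Step~2 bound into a bound on $\|\bfZ\hat\bfDelta\|_F^2$, and then controls the missing piece by the trace--H\"older identity
\[
\|(\bfI_n-\bfZ)\hat\bfDelta\|_{F}^2=\trace\big((\bfI_n-\bfZ)\hat\bfDelta\hat\bfDelta^\top\big)\le \|\bfI_n-\bfZ\|_{\infty,\infty}\,\|\hat\bfDelta\hat\bfDelta^\top\|_{1,1}\le \alpha\,\|\hat\bfDelta\|_{2,1}^2,
\]
with $\alpha:=\|\bfI_n-\bfZ\|_{\infty,\infty}$. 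A separate probabilistic lemma shows $\alpha\lesssim \big((1+M_{\bfE})^2p+\log(n/\delta)\big)/(n-|O|)$; this is where the row-norm bound in (C2) and $\sigma_{\min}(\bfX)$ enter. Adding the two pieces gives
\[
\|\hat\bfDelta\|_{2,2}^2\le C\lambda\,\|\bxi^\top\|_{2,\infty}\|\hat\bfDelta\|_{2,1}+(\lambda^2+\alpha)\|\hat\bfDelta\|_{2,1}^2,
\]
and now the cone inequality $\|\hat\bfDelta\|_{2,1}\le 3\sqrt{|O|}\,\|\hat\bfDelta\|_{2,2}$ closes the loop exactly when $|O|(\lambda^2+\alpha)$ is small --- this is the origin of the hypothesis $40|O|p(13\log(2np/\delta)+2(1+M_{\bfE})^2)\le n-|O|$. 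The trace trick works because the \emph{same} projector $\bfI_n-\bfZ$ acts on every column, reconciling the column decomposition of the loss with the row structure of $\|\cdot\|_{2,1}$; this is the non-decomposability issue you identify, and it is resolved at this step rather than at the H\"older step you suggest.

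\textbf{A secondary point.} For the quadratic gain the paper does not use your identity $\|a-d\|_2-\|a\|_2=(\|d\|_2^2-2\langle a,d\rangle)/D$, which leaves the awkward denominator $D_j$ and forces your column-splitting maneuver. It writes the KKT condition at $\hat\bfTheta$ to obtain an \emph{equality} $\|\bfZ^j\hat\bfDelta_{\bullet,j}\|_2^2=\bxi_{\bullet,j}^\top\bfZ^j\hat\bfDelta_{\bullet,j}-\lambda\|\hat\bxi_{\bullet,j}\|_2\,\bfV_{\bullet,j}^\top\hat\bfDelta_{\bullet,j}$ (with $\bfV$ a subgradient of $\|\cdot\|_{2,1}$), then handles the factor $\|\hat\bxi_{\bullet,j}\|_2$ versus $\|\bfZ^j\bxi_{\bullet,j}\|_2$ by the elementary implication $R^2\le A+BR\Rightarrow R^2\le 2A+B^2$. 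This yields directly $\sum_j\|\bfZ^j\hat\bfDelta_{\bullet,j}\|_2^2\le C\lambda\|\bxi^\top\|_{2,\infty}\|\hat\bfDelta\|_{2,1}+(\lambda\|\hat\bfDelta\|_{2,1})^2$ with no case analysis.
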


Several comments are in order. First of all, let us stress that the obtained guarantees are nonasymptotic: it is not required that
the sample size $n$ or another quantity tend to infinity for this result to be true. To the best of our knowledge, this is the
first\footnote{When this work was in preparation, the preprint \citep{LohTan2015} has been posted on arxiv that contains nonasymptotic results for another robust estimator of a multivariate Gaussian model. Detailed comparison of the results therein with the ours is provided below in the discussion on the previous work.} nonasymptotic result in robust estimation of a multivariate Gaussian model. Second, the value of the tuning parameter proposed
by this result is scale free, that is it does not depend on the magnitude of the unknown parameters of the model. Third, one can show
that the right-hand side expressions in Eq.~(\ref{rhs1})-(\ref{rhs3}) are minimax optimal up to logarithmic terms. Thus, the same estimator
of $\bfTheta^*$ is provably optimal for the three aforementioned norms. This remarkable property is due to the particular form of the
penalty used in the estimation procedure.

Let us switch now to results describing statistical properties of the estimator $\hat\bfOmega$ of the precision matrix. Unfortunately,
mathematical formulae we obtained as risk bounds for $\hat\bfOmega$ are not as compact and elegant as those of the last theorem.
Therefore, to improve their legibility, we opted for presenting the results in a more asymptotic form. Namely, we replace the condition
$40|O|p(13\log(2np/\delta)+2(1+M_{\bfE})^2)\le n-|O|$ by the following one $|O|p\log n\le c_0 n$, for some sufficiently small constant $c_0>0$, and
we do not provide explicit constants.

\begin{theorem}\label{thm:2}
Let assumptions (C1) and (C2) be satisfied and let $\lambda$ be as in \eqref{lambd}.
Then there exist universal constants $C,c_0>0$ and $n_0\in\mathbb N$ such that for $n\ge n_0$ and $|O|p\log n \le c_0 n$, the inequality
\begin{align}
{\|\hat\bfOmega-\bfOmega^*\|}_{2,2}&\le C\frac{\sigma_{\max}(\bfOmega^*)^2}{\sigma_{\min}(\bfOmega^*)}
\bigg\{M_{\bfE}\frac{|O|p\log n}{n}+\bigg(\frac{p^2\log n}{n}\bigg)^{1/2}\bigg\}\label{rhs4}
\end{align}
holds true with probability at least $1-5/n$.
\end{theorem}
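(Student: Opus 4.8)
The plan is to reduce the estimation of $\bfOmega^*$ to the already-controlled estimation of $\bfTheta^*$ and $\bfB^*$, using the explicit algebraic relations \eqref{step:2bis}--\eqref{main:eq}. Recall $\hat\bfOmega = \hat\bfB\cdot\diag(\{\hat\omega_{jj}\})$ and $\bfOmega^* = \bfB^*\diag(\bfOmega^*)$, so by the triangle inequality it suffices to control separately (i) the deviation of $\hat\bfB$ from $\bfB^*$ in spectral norm, and (ii) the deviation of the diagonal scaling $\hat\omega_{jj}$ from $\omega^*_{jj}$. For (i), the starting point is the closed form \eqref{eq:4L}: $\bfX^{(n)}_{\bullet,j^c}\hat\bfB_{j^c,j}(\hat\bfTheta) = -\bfPi^{j^c}(\bfX^{(n)}_{\bullet,j}-\hat\bfTheta_{\bullet,j})$, combined with the analogous population identity obtained from \eqref{main:eq}, namely that $\bfX^{(n)}_{\bullet,j^c}\bfB^*_{j^c,j}$ equals $-\bfPi^{j^c}$ applied to the signal part plus a noise correction. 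First I would write $\hat\bfB_{j^c,j}-\bfB^*_{j^c,j}$ as the sum of a term driven by $(\hat\bfTheta-\bfTheta^*)_{\bullet,j}$ and a term driven by the Gaussian residual $\bxi_{\bullet,j}$ projected by $\bfPi^{j^c}$; the first is bounded column-wise using \eqref{rhs2}--\eqref{rhs3} from Theorem~\ref{thm:1}, while the second is a standard ridge-type/least-squares estimation error of order $\sqrt{p/n}$ per column after inverting the Gram matrix of $\bfX^{(n)}_{\bullet,j^c}$. Summing squares over $j$ and passing from the Frobenius to the spectral norm gives the $(p^2\log n/n)^{1/2}$ term and the $M_\bfE\,|O|p\log n/n$ term in \eqref{rhs4}.

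The next step is to control the Gram matrices and the quantities $\hat\omega_{jj}$. Under (C1)--(C2) and $|O|p\log n\le c_0 n$, a concentration argument (Gaussian covariance concentration for the inlier rows, plus a perturbation bound for the $|O|$ contaminated rows and for the removal of the $\hat\bfE$-estimated corruption) shows that $\frac1n (\bfX^{(n)})^\top\bfX^{(n)}$ and the relevant submatrices are, with probability $1-O(1/n)$, within a constant factor of $\bfSigma^*$ in spectral norm; this is where the factor $\sigma_{\max}(\bfOmega^*)^2/\sigma_{\min}(\bfOmega^*)$ in \eqref{rhs4} originates, as it bounds norms of the inverse Gram matrix times $\bfB^*$. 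For the diagonal entries, the definition $\hat\omega_{jj} = \frac{2n}{\pi}\|(\bfI_n-\bfu_n\bfu_n^\top)(\bfX^{(n)}\hat\bfB_{\bullet,j}-\hat\bfTheta_{\bullet,j})\|_1^{-2}$ is designed so that, when $\hat\bfB$ and $\hat\bfTheta$ are replaced by the truth, the vector inside is (up to centering) $\phi^*_j\bepsilon_{\bullet,j}/\sqrt n$, whose $\ell_1$-norm concentrates around $\sqrt{2/\pi}\,\phi^*_j\sqrt n$ by the mean-absolute-value of a standard Gaussian—hence the $2n/\pi$ normalization recovers $\omega^*_{jj}=1/(\phi^*_j)^2$. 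I would then propagate the errors in $\hat\bfB_{\bullet,j}$ and $\hat\bfTheta_{\bullet,j}$ through a first-order (Lipschitz) perturbation of the map $v\mapsto \|v\|_1^{-2}$, using that the base point is bounded away from zero on the event just described.

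Finally I would assemble the pieces: $\|\hat\bfOmega-\bfOmega^*\|_{2,2}\le \|\hat\bfB-\bfB^*\|_{2,2}\,\|\diag(\bfOmega^*)\|_{2,2} + \|\hat\bfB\|_{2,2}\,\|\diag(\{\hat\omega_{jj}-\omega^*_{jj}\})\|_{2,2}$, bound each spectral norm by the estimates above, absorb logarithmic factors, and collect constants into a single universal $C$; the probability budget $1-5/n$ comes from combining the $1-3\delta$ event of Theorem~\ref{thm:1} (with $\delta\asymp 1/n$) with the $O(1/n)$ Gram-matrix and $\ell_1$-concentration events. The main obstacle I anticipate is step (i): because the objective \eqref{eq:1L} is not decomposable across rows of $\bfTheta$, the error $\hat\bfTheta-\bfTheta^*$ is not independent of the Gaussian noise $\bxi$ entering the least-squares formula for $\hat\bfB$, so the cross term between the "outlier-estimation error" and the "projected-noise" contributions must be handled carefully—probably by conditioning, by a crude but uniform bound on $\|\bfPi^{j^c}(\hat\bfTheta-\bfTheta^*)_{\bullet,j}\|_2\le\|(\hat\bfTheta-\bfTheta^*)_{\bullet,j}\|_2$, and by re-using the deterministic inequalities established inside the proof of Theorem~\ref{thm:1} rather than only its stated conclusions. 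A secondary technical point is ensuring the inverse Gram matrices of the $(n-|O|)\times(p-1)$ inlier submatrices are simultaneously well-conditioned for all $j\in[p]$, which requires a union bound over $j$ that the condition $n\ge|O|+8p+16\log(4/\delta)$ is tailored to afford.
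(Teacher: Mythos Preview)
Your overall architecture matches the paper's: decompose $\hat\bfOmega-\bfOmega^*$ into a $\hat\bfB-\bfB^*$ piece and a diagonal piece, handle the diagonal via a Lipschitz perturbation of $v\mapsto\|v\|_1^{-2}$ around $\phi_j^*\bepsilon_{\bullet,j}/\sqrt n$, and write $\bfX^{(n)}(\hat\bfB_{\bullet,j}-\bfB^*_{\bullet,j})=(\bfI_n-\bfZ^j)(\hat\bfDelta^\bfTheta_{\bullet,j}-\bxi_{\bullet,j})$ as outlier-error plus projected noise. The noise part, the diagonal part, and the conditioning of the Gram matrix via $\sigma_{\min}(\bfX^{(n)})$ are all treated essentially as in the paper.

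The gap is in the outlier-error contribution to $\hat\bfB-\bfB^*$. Your proposed workaround, the ``crude but uniform bound'' $\|\bfPi^{j^c}\hat\bfDelta^\bfTheta_{\bullet,j}\|_2\le\|\hat\bfDelta^\bfTheta_{\bullet,j}\|_2$, after summing over $j$ yields $\|\hat\bfDelta^\bfTheta\|_{2,2}\lesssim (|O|p\log n/n)^{1/2}$ from \eqref{rhs3}. This does \emph{not} recover the $M_\bfE\,|O|p\log n/n$ term of \eqref{rhs4}: in the regime $|O|\gg p$ with $|O|p\log n/n$ small, the square root is strictly larger, so the crude bound fails to prove the theorem as stated. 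Also, the ``cross term'' worry you raise is a red herring: the decomposition $(\bfI_n-\bfZ^j)\hat\bfDelta^\bfTheta_{\bullet,j}-(\bfI_n-\bfZ^j)\bxi_{\bullet,j}$ is a plain sum and each piece is bounded deterministically on a high-probability event; no independence between $\hat\bfDelta^\bfTheta$ and $\bxi$ is used.

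The missing idea is precisely one of the deterministic inequalities inside the proof of Theorem~\ref{thm:1} that you allude to but do not identify. From Proposition~\ref{prop:7} one has
\[
\|(\bfI_n-\bfZ)\hat\bfDelta^\bfTheta\|_{2,2}^2
=\trace\big((\bfI_n-\bfZ)\hat\bfDelta^\bfTheta(\hat\bfDelta^\bfTheta)^\top\big)
\le \|\bfI_n-\bfZ\|_{\infty,\infty}\,\|\hat\bfDelta^\bfTheta\|_{2,1}^2,
\]
and Lemma~\ref{lem:6} gives $\alpha:=\|\bfI_n-\bfZ\|_{\infty,\infty}\lesssim (1+M_\bfE)^2 p/n$. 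Combined with the $\ell_{2,1}$ bound \eqref{rhs2}, this yields $\alpha^{1/2}\|\hat\bfDelta^\bfTheta\|_{2,1}\lesssim M_\bfE\,|O|p\log n/n$, which is exactly the term you claimed but could not derive. In short: replace the projector bound $\|\bfPi^{j^c}v\|_2\le\|v\|_2$ by the trace/$\ell_\infty$ trick above, pairing it with \eqref{rhs2} rather than \eqref{rhs3}. (Minor point: throughout, $\|\cdot\|_{2,2}$ in this paper is the Frobenius norm, not the spectral norm; your decomposition should use $\max_j|\cdot|$ for the diagonal factor rather than its Frobenius norm.)
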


This result tells us that in an asymptotic setting when all the three parameters $n$, $p$ and $|O|$ are allowed to tend to
infinity but so that $|O|p = o(n/\log n)$, the rate of convergence of the estimator $\hat\bfOmega$, measured in the
Frobenius norm, is $p(\frac{|O|}{n}+\frac1{n^{1/2}})$. This rate contains two components, $p/n^{1/2}$ and $p|O|/n$, each of which has a
clear interpretation. The rate $p/n^{1/2}$ comes from the fact that we are estimating $p^2$ entries of the matrix
$\bfOmega^*$ based on $n$ observations. This term is unavoidable if no additional assumption (such as the sparsity) is made;
it is the minimax rate of convergence in the outlier-free set-up. The second term, $p|O|/n$, originates from the fact that
the outlier matrix has $p|O|$ nonzero entries which need to be somehow estimated for making it possible to estimate the model
parameters. So, this term of the risk reflects the deterioration caused by the presence of outliers.

\section{Discussion}\label{sec:4}

\paragraph{Our bounds versus those of always zero estimator}

Given that the matrix $\bfTheta^*$ is defined as $\bfE^*$ divided by $\sqrt{n}$, one may wonder what is the advantage of our results
as compared to the risk bound of the trivial estimator $\hat\bfTheta^0$ all the entries of which are $0$. Clearly, the error of this
estimator measured in Frobenius norm is of the order $M_{\bfE}^2|O|p/n$. One may erroneously think that this bound is of the same
order as the one we obtained above for the convex programming based estimator. In contrast with this, the risk bound of our
estimator---although requires $M_{\bfE}^2|O|p/n$ to be bounded by some small constant---does not depend on $M_{\bfE}$. For instance, if
$M_{\bfE}= \frac1{12}(\frac{n}{|O|p})^{1/2}$, the trivial estimator will have a constant risk whereas the estimator $\hat\bfTheta$
will be consistent and rate optimal provided that $|O|p\log(n+p) = o(n)$.

Another important advantage of our estimator---inherent to its definition and reflected in the obtained risk bounds---is that its
squared error is proportional to the quantity $\max_{j\in p} (\phi_j^*){}^2$, where $(\phi_j^*)^2$  represents the conditional variance
of the $j$-th variable given all the others. In situations where the variables contain strong correlations, these conditional variances
are significantly smaller than the marginal variances of the variables.

\paragraph{What happens if some outliers have very large norms ?}
The risk bound established for our estimator requires the constant $M_{\bfE}$, measuring the order of magnitude of the Euclidean norm
of the outliers, to be not too large. This is not an artifact of our mathematical arguments, but an inherent limitation of our method.
We did some experiments on simulated data that confirmed that when $M_{\bfE}$ is large, our estimator behaves poorly. However, we believe
that this is not a serious limitation, since one can always pre-process the data by removing the observations that have atypically
large Euclidean norm.

\paragraph{Lower bounds}
It is possible to establish lower bounds that show that the rates of convergence of the risk bounds that appear in Theorem~\ref{thm:1}
are optimal up to logarithmic factors. Indeed, one can show that there exists a constant $c>0$ such that
\begin{align}
\inf_{\bar\bfTheta_n} \sup_{(\bfOmega^*,\bfTheta^*)} \bfE\big[\|\bar\bfTheta_n-\bfTheta^*\|_{q,q'}\big]
\ge c\bigg(\frac{p^{2/q}|O|^{2/q'}}{n}\bigg)^{1/2},
\qquad (q,q')\in\{(1,1);(1,2);(2,2)\},
\end{align}
where the $\inf$ is over all possible estimators $\bar\bfTheta_n$ while the $\sup$ is over all matrices
$\bfOmega^*,\bfTheta^*$ such that $\bfE^* = \sqrt{n} \bfTheta^*\diag(\bfOmega^*)(\bfOmega^*)^{-1}$ satisfies condition (C2).
This lower bound can be proved by lower bounding the $\sup$ over all possible precision matrices by the corresponding
expression for the identity precision matrix $\bfOmega^* = \bfI_p$. In this case, $\bfE^* = \sqrt{n}\,\bfTheta^*$
and we observe $\bfX^{(n)} = \bfTheta^*+ n^{-1/2}\bepsilon$, where $\bepsilon$ is a $n\times p$  matrix with iid standard
Gaussian entries. If we further lower bound the $\sup$ over all $|O|$-(row)sparse matrices $\bfTheta^*$ by the sup over
matrices whose rows $|O|+1,\ldots,n$ vanish, we get a simple Gaussian mean estimation problem for the entries
$\theta^*_{ij}$ with $i=1,\ldots,|O|$ and $j=1,\ldots,p$, under the condition $\max_{i,j}|\theta_{ij}^*|\le n^{-1/2}M_{\bfE}$.
It is well known that in this problem the individual entries $\theta_{ij}^*$ can not be estimated at a rate faster than
$n^{-1/2}$. This yields the result for $q=q'=1$. The corresponding upper bounds for $(q,q') = (2,1)$ and
$(q,q') = (2,2)$ readily follow from that of $(q,q')=(1,1)$ by a simple application of the Cauchy-Schwarz inequality.
Furthermore, very recently, the cases $(q,q')=(2,1)$ and $(q,q')=(2,2)$ have been thoroughly studied by \cite{KloppTsyb15}.
In particular, lower bounds including logarithmic terms have been established that prove that our estimator is minimax
rate optimal when $p/|O|$ is of the order $n^r$ for some $r\in(0,1)$.

\paragraph{$\epsilon$-contamination model and minimax optimality}

The estimator proposed in this work can be applied in the context of $\epsilon$-contamination model
often used in statistics for quantifying the performance of robust estimators. It corresponds to
assuming that each of $n$ rows of the data matrix $\bfX$ is given by $\bfX_i = (1-\epsilon_i)\bfY_i  + \epsilon_i\bfE_i$,
where $\epsilon_i\in \{0,1\}$ is a Bernoulli random variable with $\bfP(\epsilon_i=1)=\epsilon$, $\bfY_i\sim \mathcal N(\bmu^*,
\bfSigma^*)$ is as  before and $\bfE_i$ is randomly drawn from a distribution $Q$. The random variables $\epsilon_i$, $\bfY_i$
and $\bfE_i$ are independent and, perhaps the main difference with the model we considered above is that all $\bfE_i$'s
are drawn from the same distribution $Q$. One may wonder whether our procedure is minimax optimal in this
$\epsilon$-contamination model.

As proved in Theorems 3.1 and 3.2 of \citep{Gao2015}, the minimax rate
for estimating the covariance matrix  $\bfSigma^*$ in the squared operator norm is $\frac{p}{n}+ \epsilon^2$. In our
notation, the role of $\epsilon$  is played by $|O|/n$. Therefore, the aforementioned result from \citep{Gao2015}
suggests that one can estimate the precision matrix in the squared Frobenius norm with the rate $p(\frac{p}{n}+
\epsilon^2) = p(\frac{p}{n}+\frac{|O|^2}{n^2})$, where the factor $p$ comes from the fact that the square of the
Frobenius norm is upper bounded by $p$-times the operator norm. Recall that the rate provided by the upper bound
of Theorem~\ref{thm:2} is $p(\frac{p}{n}+\frac{|O|^2p}{n^2})$.

Therefore, the rate obtained by a direct application of Theorem~\ref{thm:2} is sub-optimal in the minimax sense for
the $\epsilon$-contamination model (when both the dimension and the number of outliers tend to infinity with the
sample size so that $|O|^2/n $ tends to infinity). We explain in Section~\ref{app:2} below the reason of
this sub-optimality and outline an approach for getting optimal rates, up to logarithmic factors. It is still
an open question whether the rate $p(\frac{p}{n}+\frac{|O|^2p}{n^2})$ is minimax optimal over the set
$\mathcal M(\underline\tau,\overline\tau,M_{\bfE})$ of matrices $(\bfSigma^*,\bfE^*)$ such that $\underline\tau\le
\sigma_{\min}(\bfSigma^*)\le \sigma_{\max}(\bfSigma^*)\le\overline\tau$ and $\bfE^*$ satisfies condition \textbf{(C2)}.
Theorem~\ref{thm:2} establishes that $p(\frac{p}{n}+\frac{|O|^2p}{n^2})$ is an upper bound for the minimax rate,
but the question of getting matching lower bound remains open.

\paragraph{Extensions to the case of large $p$}\label{sec:2}

In the case of large $p$, most ingredients of the proof used in moderate dimensional case remain valid after a suitable
adaptation. Perhaps the most important difference is in the definition of the dimension-reduction cone. In order to present it,
let $\calJ = \{J_j:j\in[p]\}$ be a collection of $p$ subsets of $[p]$--supports of each row of the precision matrix--for which
we use the notation $|\calJ| = \sum_{j=1}^p |J_j|$.  By a slight abuse of notation, we will write $\calJ^c$ for the collection
$\{J_j^c:j\in[p]\}$ and, for every $p\times p$ matrix $\bfA$, we define $\bfA_\calJ$ as the matrix obtained from $\bfA$ by zeroing
all the elements $\bfA_{i,j}$ such that $i\not\in J_j$. Let $O$ be the subset of $[n]$ corresponding to the outliers. We define the
dimension reduction cone
\begin{align*}
\mathscr C_{\calJ, O}(c,\gamma) \triangleq \Big\{ \bfDelta \in \R^{(p+n)\times p} :
\gamma{\|\bfDelta^\bfB_{\calJ^c}\|}_{1,1} + {\|\bfDelta^\bfTheta_{O^c,\bullet}\|}_{2,1}
&\le c \big(\gamma{\|\bfDelta^\bfB_{\calJ}\|}_{1,1} +  {\|\bfDelta^\bfTheta_{O,\bullet}\|}_{2,1}\big) \Big\} ,
\end{align*}
for $c > 1$ and $\gamma>0$, where $\bfDelta^\bfB = \bfDelta_{1:p,\bullet}$ and $\bfDelta^\bfTheta = \bfDelta_{(p+1):(p+n),\bullet}$.
For a constant $\kappa>0$, let us introduce the matrix $\bfM = [\bfX^{(n)};-\bfI_n]$ and the event
\begin{align}
 \mathcal E_\kappa = \bigg\{{\|\bfM \bfDelta\|}_F^2 &\ge \kappa \bigg( \frac{{\|\bfDelta^\bfB_{\calJ}\|}_{1,1}^2}{|\calJ|}\bigg)\bigvee\bigg(\frac{{\|\bfDelta^\bfTheta_{O,\bullet}\|}_{2,1}^2}{|O|}\bigg) \quad \text{for all}\quad \bfDelta \in \mathscr C_{\calJ,O}(2,1)\bigg\}. \label{eq:mcc}
\end{align}
This event corresponds to the situations where the matrix $\bfM$ satisfies the (matrix) compatibility condition.
To simplify the statement of the result, we assume that all the diagonal entries of the covariance matrix $\bfSigma^*$ are
equal to one. Note that this assumption can be approached by dividing the columns of $\bfX$ by the corresponding robust
estimators of their standard deviation.

\begin{theorem}\label{thm:3}
Let $\calJ$ and $O$ be such that $\bfB^*_{\calJ^c} = 0$ and $\bfTheta^*_{O^c\!,\bullet} = 0$. Choose $\gamma=1$
and $\delta\in(0,1)$ such that $n\ge |O|+16\log(2p/\delta)$ and choose
\begin{equation}\label{lambd1}
\lambda = 6 \bigg(\frac{\log(2np/\delta)}{n-|O|}\bigg)^{1/2}.
\end{equation}
If\ $4\lambda(|\calJ|^{1/2}+ |O|^{1/2}) < \kappa^{1/2}$ holds, then there exists an event $\mathcal E_0$ of probability at least
$1-2\delta$ such that in $\mathcal E_\kappa\cap \mathcal E_0$, we have
 \begin{align}
{\|\hat\bfB-\bfB^*\|}_{1,1}+{\|\hat\bfTheta-\bfTheta^*-\bxi_{O,\bullet}\|}_{2,1}
&\le \frac{C_1}{\kappa}\max_{j\in[p]} (\omega^*_{jj})^{-1/2} \big(|\calJ| + |O|\big)\bigg(\frac{\log(2np/\delta)}{n-|O|}\bigg)^{1/2} \label{eq:5q}
 \end{align}
 with $C_1 \le 900$.
\end{theorem}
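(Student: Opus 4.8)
The plan is to follow the standard route for penalized square‑root‑type estimators---basic inequality, cone inclusion, compatibility condition, resolution of a quadratic inequality---but carried out for the stacked object $\hat\bfU=(\hat\bfB;\hat\bfTheta)\in\R^{(p+n)\times p}$ and the matrix $\bfM=[\bfX^{(n)};-\bfI_n]$, so that the two penalties $\lambda\|\bfTheta\|_{2,1}$ and $\lambda\|\bfB\|_{1,1}$ (recall $\gamma=1$) are treated on an equal footing. First I would set $\tilde\bfTheta^*:=\bfTheta^*+\bxi_{O,\bullet}$ (zero outside $O$) and $\tilde\bfU^*:=(\bfB^*;\tilde\bfTheta^*)$; by \eqref{eq:xi} one then has $\bfM\tilde\bfU^*=\bfX^{(n)}\bfB^*-\tilde\bfTheta^*=\bxi_{O^c,\bullet}$, a matrix supported on the inlier rows, and with $\hat\bfDelta:=\hat\bfU-\tilde\bfU^*$ (so $\hat\bfDelta^\bfB_{jj}=0$) the quantity to be bounded is exactly $\|\hat\bfDelta^\bfB\|_{1,1}+\|\hat\bfDelta^\bfTheta\|_{2,1}$. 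Since $\tilde\bfU^*$ is feasible ($\bfB^*_{jj}=1$), optimality of $\hat\bfU$ for the objective $g(\bfU)+\lambda\|\bfTheta\|_{2,1}+\lambda\|\bfB\|_{1,1}$ with $g(\bfU)=\sum_j\|(\bfM\bfU)_{\bullet,j}\|_2$ gives a basic inequality; using that $g$ is a.s.\ differentiable at $\tilde\bfU^*$ with gradient $\bfM^\top\bfW$, where $\bfW_{\bullet,j}=\bxi_{O^c,\bullet,j}/\|\bxi_{O^c,\bullet,j}\|_2$ (so $\bfW$ vanishes on the outlier rows $O$), while retaining the non‑negative curvature remainder $R(\hat\bfDelta)=g(\hat\bfU)-g(\tilde\bfU^*)-\langle\bfW,\bfM\hat\bfDelta\rangle$, and combining with the triangle inequality for both penalties together with $\bfB^*_{\calJ^c}=0$ and $\tilde\bfTheta^*_{O^c,\bullet}=0$, this becomes
\begin{align*}
R(\hat\bfDelta)+\langle\bfW,\bfM\hat\bfDelta\rangle+\lambda\|\hat\bfDelta^\bfTheta_{O^c,\bullet}\|_{2,1}+\lambda\|\hat\bfDelta^\bfB_{\calJ^c}\|_{1,1}\ \le\ \lambda\|\hat\bfDelta^\bfTheta_{O,\bullet}\|_{2,1}+\lambda\|\hat\bfDelta^\bfB_{\calJ}\|_{1,1},
\end{align*}
after which I split $\langle\bfW,\bfM\hat\bfDelta\rangle=\langle(\bfX^{(n)})^\top\bfW,\hat\bfDelta^\bfB\rangle-\langle\bfW,\hat\bfDelta^\bfTheta\rangle$.

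Next I would build $\mathcal E_0$ as the intersection of two events of probability at least $1-\delta$ each: a \emph{score} event controlling $\max_j\max_{k\ne j}|\langle\bfX^{(n)}_{\bullet,k},\bfW_{\bullet,j}\rangle|$ and the relevant norm functionals of $\bfW$ by $\lambda/4$, and a \emph{normalization} event on which $\tfrac12\phi^*_j\le\|\bxi_{O^c,\bullet,j}\|_2\le2\phi^*_j$ for all $j$ (here $\phi^*_j=(\omega^*_{jj})^{-1/2}$). Both are routine: conditionally on $\bfX^{(n)}_{\bullet,j^c}$ the vector $\bfW_{\bullet,j}$ is uniform on the sphere of the coordinate subspace indexed by $O^c$ and independent of $\bfX^{(n)}_{\bullet,j^c}$, giving sub‑Gaussian tails for the score, and $\|\bxi_{O^c,\bullet,j}\|_2^2$ is $(\phi^*_j)^2/n$ times a $\chi^2_{n-|O|}$; the hypothesis $n\ge|O|+16\log(2p/\delta)$ is exactly what lets the union bounds over the $\le np$ quantities close. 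On $\mathcal E_0$ one then has $|\langle(\bfX^{(n)})^\top\bfW,\hat\bfDelta^\bfB\rangle|\le\tfrac\lambda4\|\hat\bfDelta^\bfB\|_{1,1}$. The delicate term is $\langle\bfW,\hat\bfDelta^\bfTheta\rangle=\langle\bfW_{O^c,\bullet},\hat\bfDelta^\bfTheta_{O^c,\bullet}\rangle$, which is \emph{not} dominated by a dual‑norm estimate against $\lambda\|\cdot\|_{2,1}$ because the data‑fidelity term is not row‑decomposable---the obstruction the authors stress. I would handle it by adapting the argument behind Theorem~\ref{thm:1}: substitute $\hat\bfDelta^\bfTheta=\bfX^{(n)}\hat\bfDelta^\bfB-\bfM\hat\bfDelta$ and play the resulting inner product against $R(\hat\bfDelta)$ and the normalization $\|\bxi_{O^c,\bullet,j}\|_2\le2\phi^*_j$, so that on $\mathcal E_0$ the left‑hand side of the displayed inequality is bounded below by $\tfrac12\lambda\|\hat\bfDelta^\bfTheta_{O^c,\bullet}\|_{2,1}+\tfrac12\lambda\|\hat\bfDelta^\bfB_{\calJ^c}\|_{1,1}$ minus controlled multiples of $\lambda\|\hat\bfDelta^\bfTheta_{O,\bullet}\|_{2,1}$, $\lambda\|\hat\bfDelta^\bfB_{\calJ}\|_{1,1}$ and a genuinely quadratic term $\gtrsim(\max_j\phi^*_j)^{-1}\|\bfM\hat\bfDelta\|_F^2$. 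Rearranging gives, first, the cone inclusion $\hat\bfDelta\in\mathscr C_{\calJ,O}(2,1)$, and, second, a quadratic bound $\|\bfM\hat\bfDelta\|_F^2\le C\lambda\max_j\phi^*_j\big(\|\hat\bfDelta^\bfB_{\calJ}\|_{1,1}+\|\hat\bfDelta^\bfTheta_{O,\bullet}\|_{2,1}\big)$.

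From here everything is deterministic on $\mathcal E_\kappa$. Because $\hat\bfDelta\in\mathscr C_{\calJ,O}(2,1)$, the matrix compatibility inequality \eqref{eq:mcc} gives $\|\hat\bfDelta^\bfB_{\calJ}\|_{1,1}\le\sqrt{|\calJ|}\,\|\bfM\hat\bfDelta\|_F/\sqrt\kappa$ and $\|\hat\bfDelta^\bfTheta_{O,\bullet}\|_{2,1}\le\sqrt{|O|}\,\|\bfM\hat\bfDelta\|_F/\sqrt\kappa$, while the cone property yields $\|\hat\bfDelta^\bfB\|_{1,1}+\|\hat\bfDelta^\bfTheta\|_{2,1}\le3\big(\|\hat\bfDelta^\bfB_{\calJ}\|_{1,1}+\|\hat\bfDelta^\bfTheta_{O,\bullet}\|_{2,1}\big)\le3(\sqrt{|\calJ|}+\sqrt{|O|})\|\bfM\hat\bfDelta\|_F/\sqrt\kappa$. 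Substituting the compatibility bounds into the quadratic bound turns it into $\|\bfM\hat\bfDelta\|_F\le C\lambda\max_j\phi^*_j(\sqrt{|\calJ|}+\sqrt{|O|})/\sqrt\kappa$---this is exactly where the hypothesis $4\lambda(|\calJ|^{1/2}+|O|^{1/2})<\kappa^{1/2}$ is invoked, to absorb any self‑referential $\|\bfM\hat\bfDelta\|_F$ term with an explicit constant. Feeding this into the bound on $\|\hat\bfDelta^\bfB\|_{1,1}+\|\hat\bfDelta^\bfTheta\|_{2,1}$ gives $\le3C\kappa^{-1}\max_j\phi^*_j(\sqrt{|\calJ|}+\sqrt{|O|})^2\lambda\le C_1\kappa^{-1}\max_j(\omega^*_{jj})^{-1/2}(|\calJ|+|O|)\lambda$ with $C_1\le900$, i.e.\ \eqref{eq:5q}; the probability $1-2\delta$ is the union bound over the two pieces of $\mathcal E_0$.

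The step I expect to be the real obstacle is the treatment of $\langle\bfW_{O^c,\bullet},\hat\bfDelta^\bfTheta_{O^c,\bullet}\rangle$: unlike the $\bfB$‑score it cannot be absorbed by its penalty, so it has to be absorbed into the curvature of the $\ell_2/\ell_1$ fidelity term, and extracting from the same inequality both a cone inclusion \emph{and} a usable quadratic lower bound, with clean absolute constants, is where essentially all the new work lies. It is also precisely the reason the statement must remain conditional on the (currently unverified) compatibility event $\mathcal E_\kappa$ and requires the stringent relation $4\lambda(|\calJ|^{1/2}+|O|^{1/2})<\kappa^{1/2}$.
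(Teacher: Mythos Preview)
Your high-level route---cone inclusion, quadratic control of $\|\bfM\hat\bfDelta\|_F$, then compatibility on $\mathcal E_\kappa$---matches the paper's, and the setup (replacing $\bfTheta^*$ by $\tilde\bfTheta^*=\bfTheta^*+\bxi_{O,\bullet}$, the stacked $\hat\bfDelta$, the construction of $\mathcal E_0$ from a score bound and $\chi^2$ concentration for $\|\bxi_{O^c,\bullet,j}\|_2$, the final deterministic manipulations on $\mathcal E_\kappa$) is essentially identical.

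The gap is in how you obtain the quadratic bound. You try to extract it from the Bregman remainder $R(\hat\bfDelta)=g(\hat\bfU)-g(\tilde\bfU^*)-\langle\bfW,\bfM\hat\bfDelta\rangle$ of the fidelity linearized at the \emph{true} parameter $\tilde\bfU^*$. But the map $h\mapsto\|\bar\bxi_{\bullet,j}+h\|_2$ has zero curvature in the radial direction: if $h=t\,\bar\bxi_{\bullet,j}/\|\bar\bxi_{\bullet,j}\|_2$ with $t>-\|\bar\bxi_{\bullet,j}\|_2$, then $\|\bar\bxi_{\bullet,j}+h\|_2-\|\bar\bxi_{\bullet,j}\|_2-\bar\bxi_{\bullet,j}^\top h/\|\bar\bxi_{\bullet,j}\|_2=0$. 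Hence $R$ only controls the components of $\bfM\hat\bfDelta_{\bullet,j}$ orthogonal to $\bar\bxi_{\bullet,j}$ and cannot deliver the claimed term $\gtrsim(\max_j\phi^*_j)^{-1}\|\bfM\hat\bfDelta\|_F^2$. The paper instead writes the KKT conditions at the \emph{estimator} $\hat\bfU$: taking the inner product of the stationarity equation with $\hat\bfDelta_{\bullet,j}$ gives directly
\[
\|\bfM\hat\bfDelta_{\bullet,j}\|_2^2\le-\bar\bxi_{\bullet,j}^\top\bfM\hat\bfDelta_{\bullet,j}-\lambda\|\hat\bxi_{\bullet,j}\|_2\,\bfV_{\bullet,j}^\top\hat\bfDelta^\bfTheta_{\bullet,j}+\lambda\|\hat\bxi_{\bullet,j}\|_2\big(\|\bfB^*_{j^c,j}\|_1-\|\hat\bfB_{j^c,j}\|_1\big),
\]
then swaps $\|\hat\bxi_{\bullet,j}\|_2$ for $\|\bar\bxi_{\bullet,j}\|_2$ via $|\|\hat\bxi_{\bullet,j}\|_2-\|\bar\bxi_{\bullet,j}\|_2|\le\|\bfM\hat\bfDelta_{\bullet,j}\|_2$ and resolves the resulting one-variable quadratic with Lemma~\ref{lem:aa}. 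Summing over $j$, using $\sum_j|\bfV_{\bullet,j}^\top\hat\bfDelta^\bfTheta_{\bullet,j}|\le\|\hat\bfDelta^\bfTheta\|_{2,1}$ and the cone inclusion (which the paper, like you, derives separately from the basic inequality) yields $\|\bfM\hat\bfDelta\|_F^2\le C\lambda\|\bxi_{I,\bullet}^\top\|_{2,\infty}\big(\|\hat\bfDelta^\bfB_\calJ\|_{1,1}+\|\hat\bfDelta^\bfTheta_{O,\bullet}\|_{2,1}\big)+C\lambda^2\big(\cdots\big)^2$. If by ``adapting the argument behind Theorem~\ref{thm:1}'' you intended precisely this KKT-at-$\hat\bfU$ device, then your proof is effectively the paper's; but as written, the reliance on curvature of $g$ at $\tilde\bfU^*$ does not close.
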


The proof of this theorem follows the same scheme as the one of Theorem~\ref{thm:1}, that is the main reason of placing this
proof in the supplementary material. We will not comment this result too much because we find it incomplete at this stage. Indeed,
the main conclusion of the theorem is formulated as a risk bound that holds in an event close to $\mathcal E_\kappa$.
Unfortunately, we are not able now to provide a theoretical evaluation of $\bfP(\mathcal E_\kappa)$. We believe however that this
probability is close to one, since the matrix $\bfM$ is composed of two matrices $\bfX^{(n)}$ and $-\bfI_n$ that have weakly
correlated columns and each of these matrices satisfy the restricted eigenvalues condition. We hope that we will be able to make
this rigorous in near future. Note also that this result tells us that one gets the optimal rate (up to logarithmic factors) of estimating $\bfB^*$ in $\ell_1$-norm if the number of outliers is at most of the same order as the sparsity of the precision matrix.

\paragraph{Other related work}
In recent years, several methodological contributions have been made to the problem of robust estimation in multivariate
Gaussian models under various kinds of contamination models. For  instance, \cite{WangLin2014} have proposed a
group-lasso type strategy in the context of errors-in-variables with a pre-specified group structure on the set of
covariates whereas \cite{HiroseFujisawa2015} have introduced the method $\gamma$-lasso, a robust sparse estimation
procedure of the inverse covariance matrix based on the $\gamma$-divergence. Under cell-wise contamination model,
\cite{OllererCroux2015} and \cite{TarrMullerWeber2015} proposed to estimate the precision matrix by using either
the graphical lasso \citep{FriedmanHT,dAsprBanerjeeEG} or the CLIME estimator \citep{CaiLiuLuo} in conjunction with a robust
estimator of the covariance matrix. While~\citep{TarrMullerWeber2015} have mainly focused on the methodological aspects,
\citep{OllererCroux2015} carried out a breakdown analysis. Risk bounds on the statistical error of this procedure has
been established by \cite{LohTan2015}. They have shown that the element-wise squared error when estimating the precision
matrix $\bfOmega^*$ is of the order $\|\bfOmega^*\|_{1,\infty}^2\, \big(\frac{p}{n}+\frac{|O|^2}{n^2}\big)$. This result
is particularly appealing for very sparse precision matrices having small $\ell_{1,\infty}$ norm. However, in moderate dimensional
situations where the precision matrix is not necessarily sparse, the  term $\|\bfOmega^*\|_{1,\infty}^2$ is generally
proportional to $p\sigma_{\max}(\bfOmega^*)^2$ and the resulting upper bound is very likely to be sub-optimal. If we apply
this result for assessing the quality of estimation in the squared Frobenius norm, we get an upper bound of the order
$p^2\big(\frac{p}{n}+\frac{|O|^2}{n^2}\big)$, whereas our result provides an upper bound of the order
$p\big(\frac{p}{n}+\frac{|O|^2p}{n^2}\big)$.  Furthermore, the results in \citep{LohTan2015} require the tuning parameter $\lambda$
to be larger than an expression that involves the proportion of the outliers and  the $\ell_{1,\infty}$ norm of the matrix
$\bfOmega^*$. This quantities are rarely available in practice and their estimation is often a hard problem. Finally,
in the context of robust estimation of large matrices, let us also mention the recent work \citep{Klopp15}, proposing
a robust method of matrix completion and establishing sharp risk bounds on its statistical error.

\section{Technical results and proofs}\label{sec:6}

This section contains the proofs of all the mathematical claims of the paper, except Theorem~\ref{thm:3}, the proof of which is placed
in the supplementary material. The section is split into three parts. The first part contains the proof of Theorem~\ref{thm:1}, up to
some technical lemmas characterizing the order of magnitude of the stochastic terms. The proof of Theorem~\ref{thm:2} is presented in
the second part, while the third part contains the aforementioned lemmas on the tail behaviour of random quantities appearing in the
proofs.

To ease notation, we define the projection matrix $\bfZ = \bfI_n - \bfX\big(\bfX^\top \bfX\big)^\dag \bfX^\top$.

\subsection{Risk bounds for outlier estimation}

In this subsection, we provide a proof of Theorem~\ref{thm:1}, which contains perhaps the most original mathematical arguments of this work.
Prior to diving into low-level technical arguments, let us provide a high-level overview of the proof. We can split it into
four steps as follows:
\begin{description}
\item[Step 1:] We check that if
  \begin{align} \label{eq:7}
  \lambda \ge 3 \max_{i\in[n]} \bigg(\sum_{j\in [p]} \frac{(\bfZ^j_{i,\bullet}\bepsilon_{\bullet,j})^2}{{\|{\displaystyle\bfZ^j} \bepsilon_{\bullet,j}\|}_2^2}\bigg)^{1/2}
  \end{align}
	then the vector $\hat\bfDelta^\bfTheta = \hat\bfTheta-\bfTheta^*$ belongs to the dimension-reduction cone
  \begin{align}\label{eqn:cone}
  {\|\hat\bfDelta^\bfTheta_{O^c,\bullet}\|}_{2,1} \le 2 {\|\hat\bfDelta^\bfTheta_{O,\bullet}\|}_{2,1}.
  \end{align}
\item[Step 2:] Using the Karush-Kuhn-Tucker conditions, we establish the bound
\begin{align}\label{eqn:KKT}
{\|\bfZ \hat\bfDelta^\bfTheta\|}_{2,2}^2 \le \frac{14\lambda}{3}   {\|\bxi^\top\|}_{2,\infty} {\|\hat\bfDelta^\bfTheta\|}_{2,1} + \big( \lambda {\|\hat\bfDelta^\bfTheta\|}_{2,1} \big)^2
\end{align}
for $\lambda$ satisfying \eqref{eq:7}.
\item[Step 3:] Combining the two previous steps and using notation $\alpha:={\|\bfI_n-\bfZ\|}_{\infty,\infty}$,
we obtain
   \begin{align}\label{eq:8b}
    {\|\hat\bfDelta^\bfTheta\|}_{2,2}\le 140\lambda   {\|\bxi^\top\|}_{2,\infty}|O|^{1/2} \qquad \text{and} \qquad {\|\hat\bfDelta^\bfTheta\|}_{2,1} \le 520\lambda   {\|\bxi^\top\|}_{2,\infty}|O|,
   \end{align}
provided that $|O|( \lambda^2+\alpha)<1/10$.
\item[Step 4:]  We conclude by establishing deterministic bounds on the random variables that appear in expressions \eqref{eq:7} and \eqref{eq:8b},
 as well as on $\alpha$.
\end{description}
The proofs of Steps 1 and 4 are, up to some additional technicalities, similar to those for square-root lasso. Steps 2 and 3 contain
more original ingredients. The detailed proofs of all these steps are given below.

  For every $c>0$ and $O\subset[n]$, we define the cone
  \begin{align*}
   \mathscr C_{O}(c) \triangleq \Big\{ \bfDelta \in \R^{n\times p} :
  {\|\bfDelta^\bfTheta_{O^c,\bullet}\|}_{2,1} \le c {\|\bfDelta^\bfTheta_{O,\bullet}\|}_{2,1} \Big\}.
  \end{align*}

  \begin{lemma}\label{step1}
   If, for some constant $c>1$, the penalty level $\lambda$ satisfies the condition
   \begin{align}
    \lambda \ge \frac{c+1}{c-1} \max_{i\in[n]} \bigg(\sum_{j\in [p]} \frac{(\bfZ^j_{i,\bullet}\bepsilon_{\bullet,j})^2}{{\|{\displaystyle\bfZ^j} \bepsilon_{\bullet,j}\|}_2^2}\bigg)^{1/2}, \label{eq:K}
   \end{align}
   then the matrix $\hat\bfDelta^\bfTheta$ belongs to the cone $\mathscr C_{O}(c)$.
  \end{lemma}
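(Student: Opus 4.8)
The plan is to run the standard ``basic inequality'' argument for square-root-type estimators, adapted to the column-wise data-fidelity term $\sum_j\|\bfZ^j(\cdot)\|_2$ of \eqref{eq:5L} and the row-wise penalty $\|\cdot\|_{2,1}$. First I would record the identity at the truth: since $\bfX^{(n)}\bfB^*=\bfTheta^*+\bxi$ by \eqref{eq:xi} and $\bfB^*_{jj}=1$, one has $\bfX^{(n)}_{\bullet,j}-\bfTheta^*_{\bullet,j}=\bxi_{\bullet,j}-\bfX^{(n)}_{\bullet,j^c}\bfB^*_{j^c,j}$, and because $\bfZ^j=\bfI_n-\bfPi^{j^c}$ annihilates the columns of $\bfX^{(n)}_{\bullet,j^c}$, this gives $\bfZ^j(\bfX^{(n)}_{\bullet,j}-\bfTheta^*_{\bullet,j})=\bfZ^j\bxi_{\bullet,j}$. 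Writing $\hat\bfTheta=\bfTheta^*+\hat\bfDelta^\bfTheta$, the residual at $\hat\bfTheta$ in the $j$-th block is therefore $\bfZ^j\bxi_{\bullet,j}-\bfZ^j\hat\bfDelta^\bfTheta_{\bullet,j}$.

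Next I would compare the objective of \eqref{eq:5L} at $\hat\bfTheta$ and at $\bfTheta^*$. Optimality of $\hat\bfTheta$ gives
\[
\sum_{j=1}^p\|\bfZ^j\bxi_{\bullet,j}-\bfZ^j\hat\bfDelta^\bfTheta_{\bullet,j}\|_2+\lambda\|\bfTheta^*+\hat\bfDelta^\bfTheta\|_{2,1}\le\sum_{j=1}^p\|\bfZ^j\bxi_{\bullet,j}\|_2+\lambda\|\bfTheta^*\|_{2,1}.
\]
For the data-fidelity terms I would apply the convexity (subgradient) inequality $\|a-b\|_2\ge\|a\|_2-\langle a,b\rangle/\|a\|_2$ with $a=\bfZ^j\bxi_{\bullet,j}$, $b=\bfZ^j\hat\bfDelta^\bfTheta_{\bullet,j}$, using that $\bfZ^j$ is symmetric and idempotent to replace $\langle\bfZ^j\bxi_{\bullet,j},\bfZ^j\hat\bfDelta^\bfTheta_{\bullet,j}\rangle$ by $\langle\bfZ^j\bxi_{\bullet,j},\hat\bfDelta^\bfTheta_{\bullet,j}\rangle$; after cancelling $\sum_j\|\bfZ^j\bxi_{\bullet,j}\|_2$ on both sides this yields
\[
\lambda\big(\|\bfTheta^*+\hat\bfDelta^\bfTheta\|_{2,1}-\|\bfTheta^*\|_{2,1}\big)\le\sum_{j=1}^p\frac{\langle\bfZ^j\bxi_{\bullet,j},\hat\bfDelta^\bfTheta_{\bullet,j}\rangle}{\|\bfZ^j\bxi_{\bullet,j}\|_2}.
\]

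The only step requiring care is matching the right-hand side to the $\ell_{2,1}$ penalty. Since $\bxi_{\bullet,j}$ is a positive scalar multiple of $\bepsilon_{\bullet,j}$, the unit vector $\bfZ^j\bxi_{\bullet,j}/\|\bfZ^j\bxi_{\bullet,j}\|_2$ equals $\bfZ^j\bepsilon_{\bullet,j}/\|\bfZ^j\bepsilon_{\bullet,j}\|_2$; writing the inner products entrywise as $\sum_{j}\sum_{i}(\bfZ^j_{i,\bullet}\bepsilon_{\bullet,j})\hat\bfDelta^\bfTheta_{i,j}/\|\bfZ^j\bepsilon_{\bullet,j}\|_2$, interchanging the order of summation, and applying Cauchy--Schwarz \emph{within each row} $i$, the right-hand side is at most $\sum_{i=1}^n\big(\sum_{j\in[p]}(\bfZ^j_{i,\bullet}\bepsilon_{\bullet,j})^2/\|\bfZ^j\bepsilon_{\bullet,j}\|_2^2\big)^{1/2}\|\hat\bfDelta^\bfTheta_{i,\bullet}\|_2$, which by hypothesis \eqref{eq:K} is bounded by $\tfrac{c-1}{c+1}\lambda\|\hat\bfDelta^\bfTheta\|_{2,1}$. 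For the left-hand side I would use $\bfTheta^*_{O^c,\bullet}=0$ to split $\|\bfTheta^*+\hat\bfDelta^\bfTheta\|_{2,1}=\|(\bfTheta^*+\hat\bfDelta^\bfTheta)_{O,\bullet}\|_{2,1}+\|\hat\bfDelta^\bfTheta_{O^c,\bullet}\|_{2,1}$ together with the reverse triangle inequality on the $O$-block, giving $\|\bfTheta^*+\hat\bfDelta^\bfTheta\|_{2,1}-\|\bfTheta^*\|_{2,1}\ge\|\hat\bfDelta^\bfTheta_{O^c,\bullet}\|_{2,1}-\|\hat\bfDelta^\bfTheta_{O,\bullet}\|_{2,1}$. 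Combining the two bounds, dividing by $\lambda>0$, and collecting terms yields $\tfrac{2}{c+1}\|\hat\bfDelta^\bfTheta_{O^c,\bullet}\|_{2,1}\le\tfrac{2c}{c+1}\|\hat\bfDelta^\bfTheta_{O,\bullet}\|_{2,1}$, i.e. $\hat\bfDelta^\bfTheta\in\mathscr C_O(c)$. A minor technical point to dispatch is that $\|\bfZ^j\bepsilon_{\bullet,j}\|_2>0$ almost surely, which holds because $\bfZ^j$ has rank at least $n-p+1\ge 1$ in the moderate-dimensional regime and is independent of $\bepsilon_{\bullet,j}$; the main ``obstacle'' is genuinely just the summation interchange and the per-row Cauchy--Schwarz, everything else being routine manipulation of optimality conditions.
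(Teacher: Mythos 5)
Your proposal is correct and follows essentially the same route as the paper's proof: the basic inequality from optimality in \eqref{eq:5L}, the bound $\|a\|_2-\|b\|_2\le (a-b)^\top a/\|a\|_2$ with $a=\bfZ^j\bxi_{\bullet,j}$, the summation interchange with per-row Cauchy--Schwarz, the split $\|\hat\bfTheta\|_{2,1}-\|\bfTheta^*\|_{2,1}\ge\|\hat\bfDelta^\bfTheta_{O^c,\bullet}\|_{2,1}-\|\hat\bfDelta^\bfTheta_{O,\bullet}\|_{2,1}$, and the final rearrangement under \eqref{eq:K}. Your explicit remarks that $\bxi_{\bullet,j}$ is a positive multiple of $\bepsilon_{\bullet,j}$ and that $\|\bfZ^j\bepsilon_{\bullet,j}\|_2>0$ almost surely are fine touches that the paper leaves implicit.
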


  \begin{proof}
%    This proof follows the same sketch as Proposition \ref{prop:1}.
   The definition of $\hat\bfTheta$ by optimization problem \eqref{eq:5L} immediately leads to
   \begin{align}
    \lambda \big({\|\hat\bfTheta\|}_{2,1} - {\|\bfTheta^*\|}_{2,1}\big) \le \sum_{j\in[p]}
    \big(\|\bfZ^j(\bfX^{(n)}_{\bullet,j}-\bfTheta^*_{\bullet,j})\big\|_2-
    \|\bfZ^j(\bfX^{(n)}_{\bullet,j}-\hat\bfTheta_{\bullet,j})\big\|_2\big). \label{eq:O}
   \end{align}
   We use the inequality $\|a\|_2-\|b\|_2\le (a-b)^\top a/\|a\|_2$ which ensues from the Cauchy-Schwarz inequality and is true for any pair of vectors $(a,b)$, here with
   $a=\bfZ^j(\bfX^{(n)}_{\bullet,j}-\bfTheta^*_{\bullet,j})$ and $b=\bfZ^j(\bfX^{(n)}_{\bullet,j}-\hat\bfTheta_{\bullet,j})$.
   Clearly, we have $a-b = \bfZ^j \hat\bfDelta^\bfTheta_{\bullet,j}$ and $a = \bfZ^j \bxi_{\bullet,j}$. Hence, we obtain
   \begin{align*}
    {\big\|\bfZ^j(\bfX^{(n)}_{\bullet,j}-\bfTheta^*_{\bullet,j})\big\|}_2 - {\big\|\bfZ^j(\bfX^{(n)}_{\bullet,j}-\hat\bfTheta_{\bullet,j})\big\|}_2 &\le (\bfZ^j \hat\bfDelta^\bfTheta_{\bullet,j})^\top \frac{\bfZ^j \bxi_{\bullet,j}}{{\big\|\bfZ^j \bxi_{\bullet,j}\big\|}_2} %\\
%    &= {\displaystyle\hat\bfDelta^\bfTheta_{\bullet,j}}^\top \frac{\bfZ^j \bxi_{\bullet,j}}{{\big\|\bfZ^j \bxi_{\bullet,j}\big\|}_2} \\&
		=\sum_{i=1}^n \hat\bfDelta^\bfTheta_{i,j} \frac{\bfZ^j_{i,\bullet} \bxi_{\bullet,j}}{{\big\|\bfZ^j \bxi_{\bullet,j}\big\|}_2}.
   \end{align*}
  Then summing on $j\in[p]$ and applying the Cauchy-Schwarz inequality, we get
   \begin{align*}
    \sum_{j\in[p]}\|\bfZ^j(\bfX^{(n)}_{\bullet,j}-\bfTheta^*_{\bullet,j})\big\|_2-\|\bfZ^j(\bfX^{(n)}_{\bullet,j}-\hat\bfTheta_{\bullet,j})\big\|_2
    &\le \sum_{i=1}^n {\|\hat\bfDelta^\bfTheta_{i,\bullet}\|}_2 \bigg(\sum_{j=1}^p \frac{(\bfZ^j_{i,\bullet} \bxi_{\bullet,j})^2}{{\big\|\bfZ^j \bxi_{\bullet,j}\big\|}_2^2}\bigg)^\frac12 .
   \end{align*}
  This inequality, in conjunction with Eq.~\eqref{eq:O} and the obvious inequality ${\|\hat\bfTheta\|}_{2,1} - {\|\bfTheta^*\|}_{2,1} \ge {\|\hat\bfDelta^\bfTheta_{O^c,\bullet}\|}_{2,1}-{\|\hat\bfDelta^\bfTheta_{O,\bullet}\|}_{2,1}$ leads to
  \begin{align*}
   \lambda \big({\|\hat\bfDelta^\bfTheta_{O^c,\bullet}\|}_{2,1}-{\|\hat\bfDelta^\bfTheta_{O,\bullet}\|}_{2,1}\big) &\le {\|\hat\bfDelta^\bfTheta\|}_{2,1} \max_{i\in[n]} \bigg(\sum_{j=1}^p \frac{(\bfZ^j_{i,\bullet} \bxi_{\bullet,j})^2}{{\big\|\bfZ^j \bxi_{\bullet,j}\big\|}_2^2}\bigg)^\frac12 \\
   &\le \lambda \frac{c-1}{c+1} \Big({\|\hat\bfDelta^\bfTheta_{O,\bullet}\|}_{2,1} + {\|\hat\bfDelta^\bfTheta_{O^c,\bullet}\|}_{2,1}\Big)  ,
  \end{align*}
  where the last line follows from condition \eqref{eq:K}. In conclusion, we get ${\|\hat\bfDelta^\bfTheta_{O^c,\bullet}\|}_{2,1} \le c {\|\hat\bfDelta^\bfTheta_{O,\bullet}\|}_{2,1}$, which coincides with the claim of the lemma.
\end{proof}

The second step will be split into several lemmas, whereas the final conclusion is presented below in Lemma~\ref{step2}.

\begin{lemma} \label{lem:A}
 Let us introduce the vectors $\hat\bxi_{\bullet,j} = \bfZ^j(\bfX^{(n)}_{\bullet,j}-\hat\bfTheta_{\bullet,j})$, $j\in[p]$.
There exists a $n\times p$ matrix $\bfV$ such that
\begin{align}\label{eq:A1}
\|\bfV_{i,\bullet}\|_2\le 1,\qquad
\bfV_{i,\bullet}^\top\hat\bfTheta_{i,\bullet} = \|\hat\bfTheta_{i,\bullet}\|_2,\qquad\forall\,i\in[n],
\end{align}
and, for every $j\in[p]$, the following relation holds
\begin{align} \label{eq:D}
 {\|\bfZ^j \hat\bfDelta^\bfTheta_{\bullet,j}\|}_2^2 = \bxi_{\bullet,j}^\top \bfZ^j \hat\bfDelta^\bfTheta_{\bullet,j} -\lambda {\|\hat\bxi_{\bullet,j}\|}_2 \bfV_{\bullet,j}^\top \hat\bfDelta^\bfTheta_{\bullet,j} .
\end{align}
\end{lemma}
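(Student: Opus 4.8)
The plan is to read \eqref{eq:D} off the first-order (Karush--Kuhn--Tucker) optimality conditions of the convex program \eqref{eq:5L}, of which $\hat\bfTheta$ is a global minimizer, and to take $\bfV$ from the subdifferential of the penalty. Write the objective in \eqref{eq:5L} as $G(\bfTheta) = \sum_{j=1}^p g_j(\bfTheta_{\bullet,j}) + \lambda\,h(\bfTheta)$, with $g_j(\bfv) = \|\bfZ^j(\bfX^{(n)}_{\bullet,j}-\bfv)\|_2$ and $h(\bfTheta) = \|\bfTheta\|_{2,1} = \sum_{i=1}^n\|\bfTheta_{i,\bullet}\|_2$. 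Since $G$ is convex and finite everywhere, optimality of $\hat\bfTheta$ gives $0\in\partial G(\hat\bfTheta) = \sum_{j=1}^p\partial g_j(\hat\bfTheta_{\bullet,j}) + \lambda\,\partial h(\hat\bfTheta)$ (the subdifferentials of the $g_j$ being embedded into the full matrix space, with zeros outside column $j$). As $h$ is separable over rows, any $\bfV\in\partial h(\hat\bfTheta)$ satisfies $\bfV_{i,\bullet}\in\partial\|\cdot\|_2(\hat\bfTheta_{i,\bullet})$, i.e.\ $\bfV_{i,\bullet} = \hat\bfTheta_{i,\bullet}/\|\hat\bfTheta_{i,\bullet}\|_2$ when $\hat\bfTheta_{i,\bullet}\ne 0$ and $\|\bfV_{i,\bullet}\|_2\le 1$ otherwise; in both cases $\|\bfV_{i,\bullet}\|_2\le 1$ and $\bfV_{i,\bullet}^\top\hat\bfTheta_{i,\bullet} = \|\hat\bfTheta_{i,\bullet}\|_2$, which is \eqref{eq:A1}.

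Next I would make the column-wise stationarity explicit. Writing $g_j = \phi\circ\bfZ^j\circ(\bfX^{(n)}_{\bullet,j}-\,\cdot\,)$ with $\phi = \|\cdot\|_2$, the chain rule gives $\partial g_j(\hat\bfTheta_{\bullet,j}) = -\bfZ^j\,\partial\phi\big(\bfZ^j(\bfX^{(n)}_{\bullet,j}-\hat\bfTheta_{\bullet,j})\big) = -\bfZ^j\,\partial\phi(\hat\bxi_{\bullet,j})$; since $\hat\bxi_{\bullet,j}$ lies in the range of the projector $\bfZ^j$, when $\hat\bxi_{\bullet,j}\ne 0$ this subdifferential is the single vector $-\hat\bxi_{\bullet,j}/\|\hat\bxi_{\bullet,j}\|_2$. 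Hence the $j$-th block of the stationarity condition reads $\hat\bxi_{\bullet,j} = \lambda\|\hat\bxi_{\bullet,j}\|_2\,\bfV_{\bullet,j}$. When $\hat\bxi_{\bullet,j}=0$ the factor $\|\hat\bxi_{\bullet,j}\|_2$ vanishes, so the corresponding term of \eqref{eq:D} drops out and the degenerate case requires no separate analysis.

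Finally I would express $\bfZ^j\hat\bfDelta^\bfTheta_{\bullet,j}$ through $\bxi$ and $\hat\bxi$. From $\bfB^*_{jj}=1$ and relation \eqref{eq:xi} one gets $\bfX^{(n)}_{\bullet,j}-\hat\bfTheta_{\bullet,j} = -\hat\bfDelta^\bfTheta_{\bullet,j} + \bxi_{\bullet,j} - \bfX^{(n)}_{\bullet,j^c}\bfB^*_{j^c,j}$, and applying $\bfZ^j$, which annihilates the columns of $\bfX^{(n)}_{\bullet,j^c}$, yields $\hat\bxi_{\bullet,j} = \bfZ^j\bxi_{\bullet,j} - \bfZ^j\hat\bfDelta^\bfTheta_{\bullet,j}$, i.e.\ $\bfZ^j\hat\bfDelta^\bfTheta_{\bullet,j} = \bfZ^j\bxi_{\bullet,j} - \hat\bxi_{\bullet,j}$. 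Then $\|\bfZ^j\hat\bfDelta^\bfTheta_{\bullet,j}\|_2^2 = (\bfZ^j\hat\bfDelta^\bfTheta_{\bullet,j})^\top(\bfZ^j\bxi_{\bullet,j}-\hat\bxi_{\bullet,j})$; using $(\bfZ^j)^2=\bfZ^j=(\bfZ^j)^\top$ and $\bfZ^j\hat\bxi_{\bullet,j}=\hat\bxi_{\bullet,j}$, the first inner product equals $\bxi_{\bullet,j}^\top\bfZ^j\hat\bfDelta^\bfTheta_{\bullet,j}$ and the second equals $\hat\bxi_{\bullet,j}^\top\hat\bfDelta^\bfTheta_{\bullet,j}$; substituting the stationarity identity $\hat\bxi_{\bullet,j}=\lambda\|\hat\bxi_{\bullet,j}\|_2\bfV_{\bullet,j}$ into the latter produces exactly \eqref{eq:D}. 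The only delicate point is the subdifferential bookkeeping for the non-smooth terms — justifying the sum rule $\partial G = \sum_j\partial g_j + \lambda\,\partial h$ (valid since all summands are finite and convex) and handling the vanishing cases $\hat\bfTheta_{i,\bullet}=0$ and $\hat\bxi_{\bullet,j}=0$ — but this is routine rather than substantive.
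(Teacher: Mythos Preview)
Your argument is correct and is essentially the same as the paper's: both derive \eqref{eq:D} from the KKT conditions for \eqref{eq:5L}, taking $\bfV$ from the subdifferential of the row-wise $\ell_2/\ell_1$ penalty and combining the column-wise stationarity with the identity $\hat\bxi_{\bullet,j}=\bfZ^j\bxi_{\bullet,j}-\bfZ^j\hat\bfDelta^\bfTheta_{\bullet,j}$ via the inner product with $\hat\bfDelta^\bfTheta_{\bullet,j}$. The only cosmetic difference is that the paper treats the degenerate case $\hat\bxi_{\bullet,j}=0$ separately, whereas you (correctly) observe that the factor $\|\hat\bxi_{\bullet,j}\|_2$ then vanishes and the remaining identity follows from the same expansion.
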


\begin{proof}
Let us first consider the case $\hat\bxi_{\bullet,j} \neq 0$. It is helpful to introduce the functions
$g_1(\bfTheta) = \sum_{j=1}^p{\big\|\bfZ^j (\bfX^{(n)}_{\bullet,j} - \bfTheta_{\bullet,j}) \big\|}_2$
and $g_2(\bfTheta)=\sum_{i=1}^n{\|\bfTheta_{i,\bullet}\|}_2$. The Karush-Kuhn-Tucker conditions imply
that there exist two matrices $\bfU$ and $\bfV$ in $\R^{n\times p}$ satisfying
$\bfU\in \partial_{\bfTheta}g_1(\hat\bfTheta)$, $\bfV\in \partial_{\bfTheta}g_2(\hat\bfTheta)$ and
$\bfU+\lambda\bfV =0$. For every $j\in[p]$, let $\bfu_j$ and $\bfv_j$ be the $j$th column of $\bfU$
and $\bfV$, respectively, so that  $\bfu_j+\lambda\bfv_j=0$ for every $j\in[p]$. With the assumption
that ${\|\hat\bxi_{\bullet,j}\|}_2 > 0$, $\bfu_j$ is a differential and
$\bfu_j = ({\bfZ^j}^\top \bfZ^j \hat\bfTheta_{\bullet,j} -  {\bfZ^j}^\top \bfX^{(n)}_{\bullet,j}) /
{\|\hat\bxi_{\bullet,j}\|}_2$. Thus ${\bfZ^j}^\top \bfZ^j = \bfZ^j$ leads to
$\bfu_j = \bfZ^j (\hat\bfTheta_{\bullet,j} - \bfX^{(n)}_{\bullet,j})/ {\|\hat\bxi_{\bullet,j}\|}_2$.
Hence, we deduce that $\bfZ^j \hat\bfTheta_{\bullet,j} - \bfZ^j \bfX^{(n)}_{\bullet,j} +
\lambda\bfv_j {\|\hat\bxi_{\bullet,j}\|}_2=0$. Furthermore, as $\bfX^{(n)}_{\bullet,j} = -
\bfX^{(n)}_{\bullet,j^c} \bfB^*_{j^c,j} + \bfTheta^*_{\bullet,j} + \bxi_{\bullet,j}$ and $\bfZ^j$ is
the projector onto the subspace orthogonal to $\bfX^{(n)}_{\bullet,j^c}$, it follows that
\begin{align}\label{eq:6L}
 \bfZ^j \bfX^{(n)}_{\bullet,j}  &= \bfZ^j \bfTheta^*_{\bullet,j} + \bfZ^j \bxi_{\bullet,j}.
\end{align}
This yields $\bfZ^j \hat\bfDelta^\bfTheta_{\bullet,j} - \bfZ^j \bxi_{\bullet,j} +\lambda{\|\hat\bxi_{\bullet,j}\|}_2 \bfv_j = 0 $
where $\hat\bfDelta^\bfTheta = \hat\bfTheta-\bfTheta^*$.
Finally, taking the scalar product of both sides with $\hat\bfDelta^\bfTheta_{\bullet,j}$, we get
\begin{align*}
 (\hat\bfDelta^\bfTheta_{\bullet,j})^\top \bfZ^j \hat\bfDelta^\bfTheta_{\bullet,j} -
\bxi_{\bullet,j}^\top\bfZ^j \hat\bfDelta^\bfTheta_{\bullet,j} +
\lambda{\|\hat\bxi_{\bullet,j}\|}_2 \bfv_j^\top\hat\bfDelta^\bfTheta_{\bullet,j}  = 0 .
\end{align*}
Since $\bfv_j = \bfV_{\bullet,j}$, this completes the proof of \eqref{eq:D}. To check relation (\ref{eq:A1}), it suffices to remark that
$\bfV_{i,\bullet}$ belongs to the sub-differential of the Euclidean norm $\|\bfTheta_{i,\bullet}\|_2$ evaluated at $\hat\bfTheta$.

Let us now consider the case $\hat\bxi_{\bullet,j} = 0$. This can be equivalently written as $\bfZ^j(\bfX^{(n)}_{\bullet,j}-
\hat\bfTheta_{\bullet,j})=0$. In view of Eq.\ \eqref{eq:6L}, we get $\bfZ^j\hat\bfDelta^{\bfTheta}_{\bullet,j}=\bfZ^j\bxi_{\bullet,j}$.
Taking the scalar product of both sides with $\hat\bfDelta^{\bfTheta}_{\bullet,j}$ and using the fact that $\bfZ^j$ is idempotent,
we get relation \eqref{eq:D}.
\end{proof}

\begin{lemma}\label{lem:aa}
Let $R,A,B$ be arbitrary real numbers satisfying the inequality
$R^2\le A+BR$. Then, the inequality $R^2\le 2A+B^2$ holds true.
\end{lemma}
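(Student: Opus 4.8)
The plan is to derive the claim from the elementary inequality $2BR\le B^2+R^2$, which is merely a restatement of $(B-R)^2\ge 0$ and holds for all real numbers $B$ and $R$; in particular no sign assumptions on $A$, $B$, $R$ are needed, and no case analysis is required.

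Concretely, I would start from the hypothesis $R^2\le A+BR$ and bound the cross term by $BR\le\tfrac12 B^2+\tfrac12 R^2$. Plugging this into the right-hand side gives $R^2\le A+\tfrac12 B^2+\tfrac12 R^2$. Subtracting $\tfrac12 R^2$ from both sides yields $\tfrac12 R^2\le A+\tfrac12 B^2$, and multiplying through by $2$ gives $R^2\le 2A+B^2$, which is exactly the assertion.

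There is essentially no obstacle: the only point worth noting is that one should use the symmetric Young-type bound $BR\le\tfrac12(B^2+R^2)$ rather than attempting to "solve" the quadratic inequality $R^2-BR-A\le 0$ for $R$. (That alternative route also works — the inequality forces $B^2+4A\ge 0$ and $|R|\le\tfrac12\big(|B|+\sqrt{B^2+4A}\big)$, from which $R^2\le 2A+B^2$ follows after a further application of $(x+y)^2\le 2(x^2+y^2)$ — but it is strictly longer and needs the extra estimate.) The one-line argument above suffices.
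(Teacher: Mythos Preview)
Your proof is correct, and it is in fact shorter than the paper's own argument. The paper proceeds by completing the square: it rewrites $R^2\le A+BR$ as $(2R-B)^2\le 4A+B^2$, deduces $|2R-B|\le\sqrt{4A+B^2}$ and hence $2R\le B+\sqrt{4A+B^2}$, then squares and applies $(a+b)^2\le 2a^2+2b^2$ to obtain $4R^2\le 2B^2+2(4A+B^2)$. This is precisely the ``solve the quadratic'' route you sketched in your parenthetical remark as the longer alternative. Your direct use of $BR\le\tfrac12(B^2+R^2)$ bypasses the square-root extraction and the final $(a+b)^2\le 2(a^2+b^2)$ step, collapsing everything to two lines. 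Both approaches are elementary, but yours is the cleaner one here; the paper's route has the minor advantage of making the intermediate bound on $R$ itself (rather than $R^2$) explicit, which is occasionally useful elsewhere but not needed for this lemma.
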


\begin{proof}
The inequality $R^2\le A+BR$ is equivalent to $(2R-B)^2\le 4A+B^2$. This entails that
$|2R-B|\le \sqrt{4A+B^2}$ and, therefore, $2R\le B+\sqrt{4A+B^2}$. We get the desired result
by taking the square of both sides and using the inequality $(a+b)^2\le 2a^2+2b^2$.
\end{proof}

\begin{lemma} \label{lem:D}
 Equation \eqref{eq:D} implies that
 \begin{align*}
 {\|\bfZ^j \hat\bfDelta^\bfTheta_{\bullet,j}\|}_2^2 &\le 2 \bxi_{\bullet,j}^\top \bfZ^j \hat\bfDelta^\bfTheta_{\bullet,j}
 -2\lambda{\|\bfZ^j\bxi_{\bullet,j}\|}_2 \bfV_{\bullet,j}^\top \hat\bfDelta^\bfTheta_{\bullet,j}+ (\lambda  \bfV_{\bullet,j}^\top \hat\bfDelta^\bfTheta_{\bullet,j})^2.
\end{align*}
\end{lemma}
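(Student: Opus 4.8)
The goal is to upgrade the exact identity \eqref{eq:D} into the stated quadratic inequality. The plan is to start from
\[
{\|\bfZ^j \hat\bfDelta^\bfTheta_{\bullet,j}\|}_2^2 = \bxi_{\bullet,j}^\top \bfZ^j \hat\bfDelta^\bfTheta_{\bullet,j} -\lambda {\|\hat\bxi_{\bullet,j}\|}_2 \bfV_{\bullet,j}^\top \hat\bfDelta^\bfTheta_{\bullet,j}
\]
and to rewrite $\|\hat\bxi_{\bullet,j}\|_2$ in terms of $\|\bfZ^j\bxi_{\bullet,j}\|_2$ plus a correction controlled by $\|\bfZ^j\hat\bfDelta^\bfTheta_{\bullet,j}\|_2$. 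Indeed, by Eq.~\eqref{eq:6L} we have $\hat\bxi_{\bullet,j} = \bfZ^j(\bfX^{(n)}_{\bullet,j}-\hat\bfTheta_{\bullet,j}) = \bfZ^j\bxi_{\bullet,j} - \bfZ^j\hat\bfDelta^\bfTheta_{\bullet,j}$, so the triangle inequality gives $\big|\,{\|\hat\bxi_{\bullet,j}\|}_2 - {\|\bfZ^j\bxi_{\bullet,j}\|}_2\,\big| \le {\|\bfZ^j\hat\bfDelta^\bfTheta_{\bullet,j}\|}_2$. Substituting this and bounding $-\lambda\big({\|\hat\bxi_{\bullet,j}\|}_2 - {\|\bfZ^j\bxi_{\bullet,j}\|}_2\big)\bfV_{\bullet,j}^\top\hat\bfDelta^\bfTheta_{\bullet,j} \le \lambda{\|\bfZ^j\hat\bfDelta^\bfTheta_{\bullet,j}\|}_2\,\big|\bfV_{\bullet,j}^\top\hat\bfDelta^\bfTheta_{\bullet,j}\big|$ turns \eqref{eq:D} into an inequality of the form $R^2 \le A + BR$ with $R = {\|\bfZ^j\hat\bfDelta^\bfTheta_{\bullet,j}\|}_2$, $A = \bxi_{\bullet,j}^\top\bfZ^j\hat\bfDelta^\bfTheta_{\bullet,j} - \lambda{\|\bfZ^j\bxi_{\bullet,j}\|}_2\bfV_{\bullet,j}^\top\hat\bfDelta^\bfTheta_{\bullet,j}$, and $B = \lambda\big|\bfV_{\bullet,j}^\top\hat\bfDelta^\bfTheta_{\bullet,j}\big|$.

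At this point I would invoke Lemma~\ref{lem:aa}: from $R^2 \le A + BR$ we get $R^2 \le 2A + B^2$, which is exactly
\[
{\|\bfZ^j \hat\bfDelta^\bfTheta_{\bullet,j}\|}_2^2 \le 2\bxi_{\bullet,j}^\top \bfZ^j \hat\bfDelta^\bfTheta_{\bullet,j} - 2\lambda{\|\bfZ^j\bxi_{\bullet,j}\|}_2 \bfV_{\bullet,j}^\top \hat\bfDelta^\bfTheta_{\bullet,j} + \big(\lambda \bfV_{\bullet,j}^\top \hat\bfDelta^\bfTheta_{\bullet,j}\big)^2,
\]
the claimed bound (noting $B^2 = \lambda^2(\bfV_{\bullet,j}^\top\hat\bfDelta^\bfTheta_{\bullet,j})^2$). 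One minor point to check: Lemma~\ref{lem:aa} requires $A$ and $B$ to be real numbers with $R^2\le A+BR$, which holds here regardless of the sign of $A$, since the derivation of Lemma~\ref{lem:aa} only uses completing the square. So the argument is purely algebraic once the triangle-inequality substitution is made.

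The only genuine obstacle — really a bookkeeping subtlety rather than a deep difficulty — is making sure the correction term from replacing $\|\hat\bxi_{\bullet,j}\|_2$ by $\|\bfZ^j\bxi_{\bullet,j}\|_2$ is absorbed with the correct sign, i.e. that $-\lambda\big(\|\hat\bxi_{\bullet,j}\|_2 - \|\bfZ^j\bxi_{\bullet,j}\|_2\big)\bfV_{\bullet,j}^\top\hat\bfDelta^\bfTheta_{\bullet,j}$ is bounded above (not just in absolute value) by $\lambda\|\bfZ^j\hat\bfDelta^\bfTheta_{\bullet,j}\|_2\,|\bfV_{\bullet,j}^\top\hat\bfDelta^\bfTheta_{\bullet,j}|$; this is immediate since $xy \le |x|\,|y|$. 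I would also handle the degenerate case $\hat\bxi_{\bullet,j} = 0$ separately, but there Eq.~\eqref{eq:D} already reads ${\|\bfZ^j\hat\bfDelta^\bfTheta_{\bullet,j}\|}_2^2 = \bxi_{\bullet,j}^\top\bfZ^j\hat\bfDelta^\bfTheta_{\bullet,j}$ (the $\lambda$-term vanishes), and the claimed inequality follows trivially by adding the nonnegative terms $-2\lambda\cdot 0\cdot(\cdots) + (\lambda\cdots)^2$ and using $\bxi_{\bullet,j}^\top\bfZ^j\hat\bfDelta^\bfTheta_{\bullet,j} \le 2\bxi_{\bullet,j}^\top\bfZ^j\hat\bfDelta^\bfTheta_{\bullet,j}$ — wait, that last step needs $\bxi_{\bullet,j}^\top\bfZ^j\hat\bfDelta^\bfTheta_{\bullet,j}\ge 0$, which in the degenerate case holds because it equals ${\|\bfZ^j\hat\bfDelta^\bfTheta_{\bullet,j}\|}_2^2\ge 0$. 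So even the degenerate case goes through.
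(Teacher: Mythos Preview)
Your proof is correct and follows exactly the paper's route: use $\hat\bxi_{\bullet,j} = \bfZ^j\bxi_{\bullet,j} - \bfZ^j\hat\bfDelta^\bfTheta_{\bullet,j}$ and the triangle inequality to replace $\|\hat\bxi_{\bullet,j}\|_2$ by $\|\bfZ^j\bxi_{\bullet,j}\|_2$ up to a correction of size $\|\bfZ^j\hat\bfDelta^\bfTheta_{\bullet,j}\|_2$, then apply Lemma~\ref{lem:aa} with $R=\|\bfZ^j\hat\bfDelta^\bfTheta_{\bullet,j}\|_2$. One small remark: your separate degenerate-case treatment is both unnecessary and slightly off --- when $\hat\bxi_{\bullet,j}=0$ one has $\|\bfZ^j\bxi_{\bullet,j}\|_2 = \|\bfZ^j\hat\bfDelta^\bfTheta_{\bullet,j}\|_2$, not $0$, so the middle term of the target inequality is not ``$-2\lambda\cdot 0\cdot(\cdots)$''; but this is moot since \eqref{eq:D} is established uniformly in Lemma~\ref{lem:A} and your main algebraic argument already covers both cases verbatim.
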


\begin{proof}
 According to Eq.\ \eqref{eq:6L}, we have  $\bfZ^j (\bfX^{(n)}_{\bullet,j}-\bfTheta^*_{\bullet,j}) = \bfZ^j \bxi_{\bullet,j}$.
Therefore, from the definition of the estimated residuals $\hat\bxi_{\bullet,j}$ we infer that $\bfZ^j\bxi_{\bullet,j} - \hat\bxi_{\bullet,j}=\bfZ^j
\hat\bfDelta^\bfTheta_{\bullet,j}$, which implies the inequality
\begin{align*}
 \big|{{\|\bfZ^j\bxi_{\bullet,j}\|}_2-\|\hat\bxi_{\bullet,j}\|}_2\big| &\le {\|\bfZ^j\bxi_{\bullet,j}-\hat\bxi_{\bullet,j}\|}_2
 ={\|\bfZ^j \hat\bfDelta^\bfTheta_{\bullet,j}\|}_2.
\end{align*}
Combining this bound with equation \eqref{eq:D} of Lemma \ref{lem:A}, we obtain
\begin{align*}
 {\|\bfZ^j \hat\bfDelta^\bfTheta_{\bullet,j}\|}_2^2
    &= \bxi_{\bullet,j}^\top \bfZ^j \hat\bfDelta^\bfTheta_{\bullet,j} -\lambda {\|\bfZ^j\bxi_{\bullet,j}\|}_2 \bfV_{\bullet,j}^\top \hat\bfDelta^\bfTheta_{\bullet,j} +\lambda \big({\|\bfZ^j\bxi_{\bullet,j}\|}_2 -{\|\hat\bxi_{\bullet,j}\|}_2) \bfV_{\bullet,j}^\top \hat\bfDelta^\bfTheta_{\bullet,j}\\
    &\le \bxi_{\bullet,j}^\top \bfZ^j \hat\bfDelta^\bfTheta_{\bullet,j} -\lambda{\|\bfZ^j\bxi_{\bullet,j}\|}_2 \bfV_{\bullet,j}^\top \hat\bfDelta^\bfTheta_{\bullet,j}
      +\lambda |\bfV_{\bullet,j}^\top \hat\bfDelta^\bfTheta_{\bullet,j}|\cdot {\|\bfZ^j \hat\bfDelta^\bfTheta_{\bullet,j}\|}_2.
\end{align*}
We conclude using Lemma \ref{lem:aa} with $R = {\|\bfZ^j \hat\bfDelta^\bfTheta_{\bullet,j}\|}_2$.
\end{proof}

\begin{lemma} \label{lem:C}
 Assuming that $\lambda \ge \frac{c+1}{c-1} \displaystyle\max_{i\in[n]} \bigg(\sum_{j\in [p]} \frac{(\bfZ^j_{i,\bullet}\bepsilon_{\bullet,j})^2}{{\|{\displaystyle\bfZ^j} \bepsilon_{\bullet,j}\|}_2^2}\bigg)^{1/2}$, it holds
 \begin{align*}
  \sum_{j=1}^p \bxi_{\bullet,j}^\top \bfZ^j \hat\bfDelta^\bfTheta_{\bullet,j} &\le \lambda \frac{c-1}{c+1} {\|\hat\bfDelta^\bfTheta\|}_{2,1} \max_{j\in[p]} {\|\bxi_{\bullet,j}\|}_2 .
 \end{align*}
\end{lemma}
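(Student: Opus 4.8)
The plan is to recast the left-hand side as a double sum over the row index $i\in[n]$ and the column index $j\in[p]$, and then apply the Cauchy--Schwarz inequality row by row, so that the Euclidean norms ${\|\hat\bfDelta^\bfTheta_{i,\bullet}\|}_2$ emerge and sum to ${\|\hat\bfDelta^\bfTheta\|}_{2,1}$. Concretely, each $\bfZ^j$ being symmetric, $\bxi_{\bullet,j}^\top\bfZ^j\hat\bfDelta^\bfTheta_{\bullet,j}=(\bfZ^j\bxi_{\bullet,j})^\top\hat\bfDelta^\bfTheta_{\bullet,j}=\sum_{i=1}^n\big(\bfZ^j_{i,\bullet}\bxi_{\bullet,j}\big)\hat\bfDelta^\bfTheta_{i,j}$, so after exchanging the order of summation and invoking Cauchy--Schwarz over $j$ for each fixed $i$,
\begin{align*}
\sum_{j=1}^p\bxi_{\bullet,j}^\top\bfZ^j\hat\bfDelta^\bfTheta_{\bullet,j}
=\sum_{i=1}^n\sum_{j=1}^p\big(\bfZ^j_{i,\bullet}\bxi_{\bullet,j}\big)\hat\bfDelta^\bfTheta_{i,j}
\le\sum_{i=1}^n{\|\hat\bfDelta^\bfTheta_{i,\bullet}\|}_2\bigg(\sum_{j=1}^p\big(\bfZ^j_{i,\bullet}\bxi_{\bullet,j}\big)^2\bigg)^{1/2}.
\end{align*}
It then suffices to bound the inner square root by $\frac{c-1}{c+1}\lambda\,\max_{k\in[p]}{\|\bxi_{\bullet,k}\|}_2$ for every $i$, since $\sum_i{\|\hat\bfDelta^\bfTheta_{i,\bullet}\|}_2={\|\hat\bfDelta^\bfTheta\|}_{2,1}$.

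For the inner bound I would use the same scale-invariance observation that already underlies the proof of Lemma~\ref{step1}: because the residual $\bxi_{\bullet,j}$ is a nonnegative scalar multiple of $\bepsilon_{\bullet,j}$, one has $(\bfZ^j_{i,\bullet}\bxi_{\bullet,j})^2/{\|\bfZ^j\bxi_{\bullet,j}\|}_2^2=(\bfZ^j_{i,\bullet}\bepsilon_{\bullet,j})^2/{\|\bfZ^j\bepsilon_{\bullet,j}\|}_2^2$ (and whenever $\bfZ^j\bepsilon_{\bullet,j}=0$, an event of probability zero, the corresponding term $\bfZ^j_{i,\bullet}\bxi_{\bullet,j}$ vanishes and may be dropped). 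Since $\bfZ^j$ is an orthogonal projector, ${\|\bfZ^j\bxi_{\bullet,j}\|}_2\le{\|\bxi_{\bullet,j}\|}_2\le\max_{k\in[p]}{\|\bxi_{\bullet,k}\|}_2$, whence
\begin{align*}
\sum_{j=1}^p\big(\bfZ^j_{i,\bullet}\bxi_{\bullet,j}\big)^2
=\sum_{j=1}^p{\|\bfZ^j\bxi_{\bullet,j}\|}_2^2\,\frac{(\bfZ^j_{i,\bullet}\bepsilon_{\bullet,j})^2}{{\|\bfZ^j\bepsilon_{\bullet,j}\|}_2^2}
\le\Big(\max_{k\in[p]}{\|\bxi_{\bullet,k}\|}_2\Big)^2\,\max_{i'\in[n]}\sum_{j=1}^p\frac{(\bfZ^{j}_{i',\bullet}\bepsilon_{\bullet,j})^2}{{\|\bfZ^{j}\bepsilon_{\bullet,j}\|}_2^2},
\end{align*}
and the last maximum is at most $\big(\frac{c-1}{c+1}\lambda\big)^2$ by the hypothesis on $\lambda$. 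Taking square roots and substituting into the first display gives exactly the claimed inequality.

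I do not expect a genuine obstacle here; the lemma is essentially a bookkeeping step. The only points needing a little care are (i) ordering the two Cauchy--Schwarz applications correctly --- first over the column index $j$ with $i$ fixed, so as to produce the row norms ${\|\hat\bfDelta^\bfTheta_{i,\bullet}\|}_2$, and only afterwards bounding the residual $i$-dependent sum by its maximum over $i\in[n]$ so that condition \eqref{eq:K} can be applied --- and (ii) the harmless degenerate case ${\|\bfZ^j\bepsilon_{\bullet,j}\|}_2=0$, which contributes nothing and can simply be omitted from the sums.
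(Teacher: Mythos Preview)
Your proof is correct and follows essentially the same approach as the paper's: both rewrite the sum over $j$ as a double sum over $(i,j)$, apply Cauchy--Schwarz in $j$ for each fixed $i$ to produce $\|\hat\bfDelta^\bfTheta_{i,\bullet}\|_2$, and then bound the remaining factor using the assumption on $\lambda$ together with $\|\bfZ^j\bxi_{\bullet,j}\|_2\le\|\bxi_{\bullet,j}\|_2$. The only cosmetic difference is that the paper pulls out the factor $\max_j\|\bfZ^j\bxi_{\bullet,j}\|_2$ before Cauchy--Schwarz while you do so afterward; the arguments are otherwise identical.
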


\begin{proof}
 We have
 \begin{align*}
  \sum_{j=1}^p \bxi_{\bullet,j}^\top \bfZ^j \hat\bfDelta^\bfTheta_{\bullet,j}
			&= \sum_{i=1}^n \sum_{j=1}^p (\bfZ^j \bxi_{\bullet,j})_i \hat\bfDelta^\bfTheta_{i,j}
			\le \max_{j\in[p]} {\|\bfZ^j \bxi_{\bullet,j}\|}_2 \sum_{i=1}^n \sum_{j=1}^p \frac{|(\bfZ^j \bxi_{\bullet,j})_i|}
			{{\|\bfZ^j \bxi_{\bullet,j}\|}_2}|\hat\bfDelta^\bfTheta_{i,j} |.
 \end{align*}
 Thus by the Cauchy-Schwarz inequality and the assumption of the lemma,
 \begin{align*}
  \sum_{j=1}^p \bxi_{\bullet,j}^\top \bfZ^j \hat\bfDelta^\bfTheta_{\bullet,j} &\le \max_{j\in[p]} {\|\bfZ^j \bxi_{\bullet,j}\|}_2 \sum_{i=1}^n {\|\hat\bfDelta^\bfTheta_{i,\bullet}\|}_2 \bigg(\sum_{j\in [p]} \frac{(\bfZ^j_{i,\bullet}\bxi_{\bullet,j})^2}{{\|{\displaystyle\bfZ^j} \bxi_{\bullet,j}\|}_2^2}\bigg)^{1/2} \\
  &\le \lambda \frac{c-1}{c+1} {\|\hat\bfDelta^\bfTheta\|}_{2,1} \max_{j\in[p]} {\|\bfZ^j \bxi_{\bullet,j}\|}_2 .
 \end{align*}
 Moreover, as the operator norm associated with the Euclidean norm is the spectral norm, it holds that ${\|\bfZ^j \bxi_{\bullet,j}\|}_2 \le {\|\bfZ^j\|}_2 {\|\bxi_{\bullet,j}\|}_2$. Then, as $\bfZ^j$ is a projection matrix, ${\|\bfZ^j\|}_2 = 1$ and  ${\|\bfZ^j \bxi_{\bullet,j}\|}_2 \le {\|\bxi_{\bullet,j}\|}_2$.
 The claimed result follows.
\end{proof}

  \begin{lemma}\label{step2}
   If conditions \eqref{eq:K} and \eqref{eq:D} hold, then
   \begin{align} \label{eq:B}
    \sum_{j=1}^p {\|\bfZ^j \hat\bfDelta^\bfTheta_{\bullet,j}\|}_2^2 &\le 2 \lambda  {\|\bxi^\top\|}_{2,\infty} {\|\hat\bfDelta^\bfTheta\|}_{2,1} \Big(\frac{c-1}{c+1} + 2\Big) + \Big( \lambda {\|\hat\bfDelta^\bfTheta\|}_{2,1} \Big)^2 .
   \end{align}
  \end{lemma}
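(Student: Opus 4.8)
The plan is to sum the inequality of Lemma~\ref{lem:D} over $j\in[p]$ and then control the three resulting sums separately. Summation gives
\[
\sum_{j=1}^p {\|\bfZ^j \hat\bfDelta^\bfTheta_{\bullet,j}\|}_2^2
\le 2\sum_{j=1}^p \bxi_{\bullet,j}^\top \bfZ^j \hat\bfDelta^\bfTheta_{\bullet,j}
- 2\lambda\sum_{j=1}^p {\|\bfZ^j\bxi_{\bullet,j}\|}_2\, \bfV_{\bullet,j}^\top \hat\bfDelta^\bfTheta_{\bullet,j}
+ \lambda^2\sum_{j=1}^p \big(\bfV_{\bullet,j}^\top \hat\bfDelta^\bfTheta_{\bullet,j}\big)^2 .
\]
For the first sum I would invoke Lemma~\ref{lem:C} directly, after noting that $\max_{j\in[p]}{\|\bxi_{\bullet,j}\|}_2 = {\|\bxi^\top\|}_{2,\infty}$ by the very definition of the $\ell_2/\ell_\infty$ norm (the $j$-th row of $\bxi^\top$ is the $j$-th column of $\bxi$). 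This contributes the term $2\lambda\,\tfrac{c-1}{c+1}\,{\|\bxi^\top\|}_{2,\infty}{\|\hat\bfDelta^\bfTheta\|}_{2,1}$.

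For the second and third sums the key observations are that $\bfZ^j$ is a projector, so ${\|\bfZ^j\bxi_{\bullet,j}\|}_2 \le {\|\bxi_{\bullet,j}\|}_2 \le {\|\bxi^\top\|}_{2,\infty}$, and that the matrix $\bfV$ furnished by Lemma~\ref{lem:A} satisfies ${\|\bfV_{i,\bullet}\|}_2 \le 1$ for every $i$. Interchanging the order of summation and applying the Cauchy-Schwarz inequality within each row then yields
\[
\sum_{j=1}^p \big|\bfV_{\bullet,j}^\top \hat\bfDelta^\bfTheta_{\bullet,j}\big|
\le \sum_{i=1}^n\sum_{j=1}^p |\bfV_{i,j}|\,|\hat\bfDelta^\bfTheta_{i,j}|
\le \sum_{i=1}^n {\|\bfV_{i,\bullet}\|}_2\,{\|\hat\bfDelta^\bfTheta_{i,\bullet}\|}_2
\le {\|\hat\bfDelta^\bfTheta\|}_{2,1}.
\]
Hence the second sum is bounded by $2\lambda{\|\bxi^\top\|}_{2,\infty}{\|\hat\bfDelta^\bfTheta\|}_{2,1}$, and, using $\sum_j a_j^2 \le (\sum_j |a_j|)^2$, the third sum is bounded by $\big(\lambda{\|\hat\bfDelta^\bfTheta\|}_{2,1}\big)^2$.

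Adding the three contributions gives $2\lambda{\|\bxi^\top\|}_{2,\infty}{\|\hat\bfDelta^\bfTheta\|}_{2,1}\big(\tfrac{c-1}{c+1}+1\big) + \big(\lambda{\|\hat\bfDelta^\bfTheta\|}_{2,1}\big)^2$, which is in fact slightly stronger than the stated bound since $\tfrac{c-1}{c+1}+1 \le \tfrac{c-1}{c+1}+2$, so \eqref{eq:B} follows. I do not expect any genuine difficulty here; the one point that needs a little care is the $\ell_2/\ell_1$ bookkeeping in the last two sums, where one must exploit the \emph{row-wise} bound ${\|\bfV_{i,\bullet}\|}_2\le 1$ on the subgradient matrix $\bfV$ (rather than any column-wise control), which is precisely the place where the non-decomposability of the objective with respect to the columns of $\bfTheta$ manifests itself.
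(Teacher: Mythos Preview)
Your proof is correct and follows essentially the same route as the paper: sum the inequality of Lemma~\ref{lem:D} over $j$, handle the first sum via Lemma~\ref{lem:C}, bound the second and third sums using the row-wise control ${\|\bfV_{i,\bullet}\|}_2\le 1$ together with ${\|\bfZ^j\bxi_{\bullet,j}\|}_2\le{\|\bxi_{\bullet,j}\|}_2$ and $\sum_j a_j^2\le(\sum_j|a_j|)^2$. Your observation that the argument actually yields the slightly sharper constant $\tfrac{c-1}{c+1}+1$ is correct; the paper's proof gives the same improvement but states the lemma with the looser $\tfrac{c-1}{c+1}+2$.
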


  \begin{proof}
  We first note that $\|\bfV_{i,\bullet}\|_2\le 1$ yields
  \begin{align*}
    \sum_{j=1}^p |\bfV_{\bullet,j}^\top\hat\bfDelta^\bfTheta_{\bullet,j}| \le {\|\hat\bfDelta^\bfTheta\|}_{2,1}  \quad\text{and}\quad
    \sum_{j=1}^p (\lambda \bfV_{\bullet,j}^\top\hat\bfDelta^\bfTheta_{\bullet,j})^2 \le (\lambda{\|\hat\bfDelta^\bfTheta\|}_{2,1})^2 .
  \end{align*}
  Thus, using relation \eqref{eq:D} and Lemma~\ref{lem:D}, we arrive at
  \begin{align*}
   \sum_{j=1}^p {\|\bfZ^j \hat\bfDelta^\bfTheta_{\bullet,j}\|}_2^2 &\le 2 \sum_{j=1}^p \bxi_{\bullet,j}^\top \bfZ^j \hat\bfDelta^\bfTheta_{\bullet,j}
   +2\lambda \sum_{j=1}^p {\|\bfZ^j\bxi_{\bullet,j}\|}_2 |\bfV_{\bullet,j}^\top \hat\bfDelta^\bfTheta_{\bullet,j}| + \sum_{j=1}^p (\lambda \bfV_{\bullet,j}^\top\hat\bfDelta^\bfTheta_{\bullet,j})^2  \\
   &\le 2 \sum_{j=1}^p \bxi_{\bullet,j}^\top \bfZ^j \hat\bfDelta^\bfTheta_{\bullet,j}
   +2\lambda \max_{j\in[p]} {\|\bfZ^j\bxi_{\bullet,j}\|}_2 \sum_{j=1}^p |\bfV_{\bullet,j}^\top \hat\bfDelta^\bfTheta_{\bullet,j}| + (\lambda{\|\hat\bfDelta^\bfTheta\|}_{2,1})^2 \\
   &\le 2 \sum_{j=1}^p \bxi_{\bullet,j}^\top \bfZ^j \hat\bfDelta^\bfTheta_{\bullet,j}
   +2\lambda \max_{j\in[p]} {\|\bxi_{\bullet,j}\|}_2 {\|\hat\bfDelta^\bfTheta\|}_{2,1} + (\lambda{\|\hat\bfDelta^\bfTheta\|}_{2,1})^2.
  \end{align*}
  The combination of the latter with Lemma~\ref{lem:C} implies inequality \eqref{eq:B}.
  \end{proof}

Note that $\bfZ$ and $\bfZ^j$ are two orthogonal projection matrices on nested subspaces of dimensions $n-p$ and $n-p+1$, respectively.
Hence, for any $j \in [p]$, ${\|\bfZ \hat\bfDelta^\bfTheta_{\bullet,j}\|}_2 \le {\|\bfZ^j \hat\bfDelta^\bfTheta_{\bullet,j}\|}_2$. Using this
inequality to lower bound the left-hand side of Eq.\ \eqref{eq:B} and choosing $c=2$, we get inequality \eqref{eqn:KKT} of Step 2. We are now in a
position to carry out Step 3.

  \begin{prop}\label{prop:7}
   If the penalty level $\lambda$ satisfies the condition \eqref{eq:7} and $|O|( \lambda^2+\alpha)<1/10$, then
   \begin{align}\label{eq:8}
    {\|\hat\bfDelta^\bfTheta\|}_{2,2}\le 140\lambda   {\|\bxi^\top\|}_{2,\infty}|O|^{1/2} \qquad
		\text{and} \qquad p^{-1/2}{\|\hat\bfDelta^\bfTheta\|}_{1,1}\le {\|\hat\bfDelta^\bfTheta\|}_{2,1} \le 520\lambda
		{\|\bxi^\top\|}_{2,\infty}|O|,
   \end{align}
   where $\alpha:={\|\bfI_n-\bfZ\|}_{\infty,\infty}$.
  \end{prop}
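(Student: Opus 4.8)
The plan is to convert the two outputs of Steps~1 and 2 --- the cone membership \eqref{eqn:cone} and the quadratic inequality \eqref{eqn:KKT} --- into a single self-referential inequality for the scalar $r:=\|\hat\bfDelta^\bfTheta\|_{2,2}$ and then to solve it. Note first that condition \eqref{eq:7} is exactly condition \eqref{eq:K} of Lemma~\ref{step1} with $c=2$, so $\hat\bfDelta^\bfTheta$ belongs to the cone $\mathscr C_O(2)$ and hence \eqref{eqn:KKT} holds, as explained in the remark following Lemma~\ref{step2}. From the cone inclusion I would extract the chain of inequalities
\begin{align}\label{eq:planstar}
\|\hat\bfDelta^\bfTheta\|_{2,1}\ \le\ 3\,\|\hat\bfDelta^\bfTheta_{O,\bullet}\|_{2,1}\ \le\ 3\,|O|^{1/2}\,\|\hat\bfDelta^\bfTheta_{O,\bullet}\|_{2,2}\ \le\ 3\,|O|^{1/2}\,r,
\end{align}
the middle inequality being Cauchy--Schwarz over the $|O|$ rows indexed by $O$; this bound will be used twice.

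The next ingredient is the passage from the ``projected'' norm $\|\bfZ\hat\bfDelta^\bfTheta\|_{2,2}$ that appears on the left of \eqref{eqn:KKT} back to the full norm $r$. Since $\bfZ$ and $\bfI_n-\bfZ$ are complementary orthogonal projections, $r^2=\|\bfZ\hat\bfDelta^\bfTheta\|_{2,2}^2+\|(\bfI_n-\bfZ)\hat\bfDelta^\bfTheta\|_{2,2}^2$, so it suffices to control the second term. Working column by column and using that $\bfI_n-\bfZ$ is positive semidefinite with all entries bounded in absolute value by $\alpha=\|\bfI_n-\bfZ\|_{\infty,\infty}$, one has $\|(\bfI_n-\bfZ)\hat\bfDelta^\bfTheta_{\bullet,j}\|_2^2=\hat\bfDelta^\bfTheta_{\bullet,j}{}^\top(\bfI_n-\bfZ)\hat\bfDelta^\bfTheta_{\bullet,j}\le\alpha\,\|\hat\bfDelta^\bfTheta_{\bullet,j}\|_1^2$. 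Summing over $j$, using the elementary bound $\sum_j\|\hat\bfDelta^\bfTheta_{\bullet,j}\|_1^2\le\|\hat\bfDelta^\bfTheta\|_{2,1}^2$ (expand the squares and apply Cauchy--Schwarz to $\sum_j|\hat\bfDelta^\bfTheta_{i,j}|\,|\hat\bfDelta^\bfTheta_{i',j}|$) and then \eqref{eq:planstar}, I obtain $\|(\bfI_n-\bfZ)\hat\bfDelta^\bfTheta\|_{2,2}^2\le\alpha\,\|\hat\bfDelta^\bfTheta\|_{2,1}^2\le 9\alpha|O|\,r^2$, hence $\|\bfZ\hat\bfDelta^\bfTheta\|_{2,2}^2\ge(1-9\alpha|O|)\,r^2$.

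Finally I would substitute \eqref{eq:planstar} into the right-hand side of \eqref{eqn:KKT} and the last lower bound into its left-hand side, which produces
\begin{align*}
(1-9\alpha|O|)\,r^2\ \le\ 14\lambda\,\|\bxi^\top\|_{2,\infty}\,|O|^{1/2}\,r+9\lambda^2|O|\,r^2,
\end{align*}
that is, $\big(1-9|O|(\alpha+\lambda^2)\big)\,r\le 14\lambda\,\|\bxi^\top\|_{2,\infty}\,|O|^{1/2}$ (the case $r=0$ being trivial). Since $|O|(\lambda^2+\alpha)<1/10$, the bracket on the left exceeds $1/10$, which gives $r\le 140\lambda\,\|\bxi^\top\|_{2,\infty}\,|O|^{1/2}$, the first bound of \eqref{eq:8}. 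Plugging this back into \eqref{eq:planstar} yields $\|\hat\bfDelta^\bfTheta\|_{2,1}\le 3|O|^{1/2}r\le 420\lambda\,\|\bxi^\top\|_{2,\infty}\,|O|\le 520\lambda\,\|\bxi^\top\|_{2,\infty}\,|O|$, and the remaining inequality $p^{-1/2}\|\hat\bfDelta^\bfTheta\|_{1,1}\le\|\hat\bfDelta^\bfTheta\|_{2,1}$ is simply $\|\hat\bfDelta^\bfTheta_{i,\bullet}\|_1\le p^{1/2}\|\hat\bfDelta^\bfTheta_{i,\bullet}\|_2$ summed over $i\in[n]$.

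The step I expect to be the main obstacle is the second paragraph: the trivial bound $\|(\bfI_n-\bfZ)\hat\bfDelta^\bfTheta\|_{2,2}\le r$ is useless here, and one genuinely needs \emph{both} the entrywise smallness of $\bfI_n-\bfZ$ (the quantity $\alpha$, controlled deterministically in Step~4) \emph{and} the row-sparsity encoded in the cone $\mathscr C_O(2)$ in order to turn the column-wise $\ell_1$-norms into a small multiple of $|O|^{1/2}r$. Arranging these reductions so that they ``close the loop'' with the quadratic inequality of Step~2 --- with the threshold $1/10$ matching exactly the smallness assumption on $|O|(\lambda^2+\alpha)$ --- is the delicate bookkeeping; everything else reduces to Cauchy--Schwarz and the solution of an elementary quadratic inequality in $r$.
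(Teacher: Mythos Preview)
Your proposal is correct and follows essentially the same route as the paper's proof: both combine the Pythagorean decomposition $r^2=\|\bfZ\hat\bfDelta^\bfTheta\|_{2,2}^2+\|(\bfI_n-\bfZ)\hat\bfDelta^\bfTheta\|_{2,2}^2$ with the key bound $\|(\bfI_n-\bfZ)\hat\bfDelta^\bfTheta\|_{2,2}^2\le \alpha\,\|\hat\bfDelta^\bfTheta\|_{2,1}^2$, plug in the cone inequality $\|\hat\bfDelta^\bfTheta\|_{2,1}\le 3|O|^{1/2}r$, and solve the resulting quadratic in $r$. The only cosmetic difference is in how the inequality $\|(\bfI_n-\bfZ)\hat\bfDelta^\bfTheta\|_{2,2}^2\le \alpha\,\|\hat\bfDelta^\bfTheta\|_{2,1}^2$ is derived: the paper writes it as a trace, $\trace\big((\bfI_n-\bfZ)\hat\bfDelta^\bfTheta(\hat\bfDelta^\bfTheta)^\top\big)\le \|\bfI_n-\bfZ\|_{\infty,\infty}\,\|\hat\bfDelta^\bfTheta(\hat\bfDelta^\bfTheta)^\top\|_{1,1}\le \alpha\,\|\hat\bfDelta^\bfTheta\|_{2,1}^2$, whereas you bound each column's quadratic form by $\alpha\,\|\hat\bfDelta^\bfTheta_{\bullet,j}\|_1^2$ and then invoke $\sum_j\|\hat\bfDelta^\bfTheta_{\bullet,j}\|_1^2\le \|\hat\bfDelta^\bfTheta\|_{2,1}^2$; these are two presentations of the same computation.
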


  \begin{proof}
    In the few lines that follow, we write $\bfX$ instead of $\bfX^{(n)}$ and $\hat\bfDelta$ instead of $\hat\bfDelta^\bfTheta$. Simple algebra yields
    \begin{align*}
    {\|(\bfI_n-\bfZ) \hat\bfDelta\|}_{2,2}^2
	  & =\trace\big((\bfI_n-\bfZ)\hat\bfDelta((\bfI_n-\bfZ)\hat\bfDelta)^\top\big).
    \end{align*}
    Using the facts that $\trace(\bfA \bfB) = \trace(\bfB\bfA)$ (whenever the matrix products are well defined), $\trace(\bfA\bfB)\le \|\bfA\|_{\infty,\infty}\|\bfB\|_{1,1}$ and
    $\|\bfA\bfA^\top\|_{q,q}\le \|\bfA\|_{2,q}^2$, for any $q\in[1,\infty]$, (the last one is a simple consequence of the Cauchy-Schwarz inequality) we get
    \begin{align*}
    {\|(\bfI_n-\bfZ) \hat\bfDelta\|}_{2,2}^2
	  =\trace\big((\bfI_n-\bfZ)\hat\bfDelta\hat\bfDelta^\top\big)
	  \le {\|\bfI_n-\bfZ\|}_{\infty,\infty}\cdot {\|\hat\bfDelta\hat\bfDelta^\top\|}_{1,1}
	  \le {\|\bfI_n-\bfZ\|}_{\infty,\infty}\cdot {\|\hat\bfDelta\|}_{2,1}^2.
    \end{align*}
    Adding the last inequality to Eq.\ \eqref{eqn:KKT} of Step 2 and using the Pythagorean theorem, we get
    \begin{align}\label{eqn:2}
    {\|\hat\bfDelta\|}_{2,2}^2 \le \frac{14\lambda}{3}   {\|\bxi^\top\|}_{2,\infty} {\|\hat\bfDelta\|}_{2,1} + ( \lambda^2+\alpha) {\|\hat\bfDelta\|}_{2,1}^2 .
    \end{align}
    Since according to Step 1 we have $\hat\bfDelta^\bfTheta\in \mathscr C_O(c)$, we infer that
    \begin{align*}
    {\|\hat\bfDelta\|}_{2,2}^2 \le 14\lambda   {\|\bxi^\top\|}_{2,\infty} {\|\hat\bfDelta_{O,\bullet}\|}_{2,1} + 9( \lambda^2+\alpha) {\|\hat\bfDelta_{O,\bullet}\|}_{2,1}^2.
    \end{align*}
    Finally, using the Cauchy-Schwarz inequality, we have ${\|\hat\bfDelta_{O,\bullet}\|}_{2,1}^2\le |O|\cdot {\|\hat\bfDelta_{O,\bullet}\|}_{2,2}^2$, which leads to
    \begin{align*}
    {\|\hat\bfDelta\|}_{2,2}^2 \le 14\lambda   {\|\bxi^\top\|}_{2,\infty}|O|^{1/2} {\|\hat\bfDelta_{O,\bullet}\|}_{2,2} + 9|O|( \lambda^2+\alpha) {\|\hat\bfDelta_{O,\bullet}\|}_{2,2}^2.
    \end{align*}
    Since the last norm in the right-hand side is bounded from above by ${\|\hat\bfDelta\|}_{2,2}$, we get
    \begin{align*}
    {\|\hat\bfDelta\|}_{2,2}^2\le 14\lambda   {\|\bxi^\top\|}_{2,\infty}|O|^{1/2} {\|\hat\bfDelta\|}_{2,2} + 9|O|( \lambda^2+\alpha) {\|\hat\bfDelta\|}_{2,2}^2.
    \end{align*}
    This implies that either ${\|\hat\bfDelta\|}_{2,2}=0$ or
    \begin{align}\label{eqn:3}
    {\|\hat\bfDelta\|}_{2,2}\le \frac{14\lambda   {\|\bxi^\top\|}_{2,\infty}|O|^{1/2}}{1-9|O|( \lambda^2+\alpha)},
    \end{align}
    provided that the denominator of the last expression is positive. Note that under the same condition, one can bound the norm
    ${\|\hat\bfDelta\|}_{2,1}$ as follows:
    \begin{align}\label{eqn:4}
    {\|\hat\bfDelta\|}_{2,1}
      \le 3{\|\hat\bfDelta_{O,\bullet}\|}_{2,1}
      \le 3|O|^{1/2}{\|\hat\bfDelta_{O,\bullet}\|}_{2,2}
      \le 3|O|^{1/2}{\|\hat\bfDelta\|}_{2,2}
      \le \frac{52\lambda   {\|\bxi^\top\|}_{2,\infty}|O|}{1-9|O|( \lambda^2+\alpha)}.
    \end{align}
	This completes the proof.	
  \end{proof}

The details of Step 4 are postponed to Subsection~\ref{subs:prob}. Let us just stress here that if
for a $\delta\in(0,1)$  we define the event $\mathcal E$ as the one in which the following inequalities are satisfied:
\begin{align*}
\max_{i\in [n],j\in[p]}|\bfZ^j_{i,\bullet}\bepsilon_{\bullet,j}|&\le \sqrt{2\log(2np/\delta)}\\
\min_{j\in [p]} {\|\bfZ^j \bepsilon_{\bullet,j}\|}_2^2 &\ge n-p+1 - 2\sqrt{(n-p+1)\log(2p/\delta)}\ge n/2\\
\sigma_{\min}(\bfX(\bfOmega^*)^{1/2})&\ge \sqrt{(n-|O|)/4}\\
{\|\bfI_n-\bfZ\|}_{\infty,\infty} &\le \frac{8(1+M_\bfE)^2p+16\log(2n/\delta)}{n-|O|}\\
{\|\bepsilon^\top\|}_{2,\infty} &\le \sqrt{n}+\sqrt{2\log(p/\delta)}\le \sqrt{n}\;(1+2^{-3/2}).
\end{align*}
According to Eq.\ \eqref{eq:ND}, Lemma \ref{lem:6} and Lemma~\ref{lem:3} below, as well as the union bound, we
have $\prob(\mathcal E)\ge 1-3\delta$. Furthermore, combining the above upper bound on $\alpha = {\|\bfI_n-\bfZ\|}_{\infty,\infty}$
with the condition of the theorem, we get that $|O|(\lambda^2+\alpha)\le 1/10$ in $\mathcal E$.
Thus, Proposition~\ref{prop:7} implies the claim of Theorem~\ref{thm:1}.

\subsection{Bounds on estimation error of the precision matrix}\label{app:2}

Let us denote by $\hat\bfD$ and $\bfD^*$ the $p\times p$ diagonal matrices with $\hat\bfD_{jj} = \hat\omega_{jj}$
and $\bfD^*_{jj} = \omega^*_{jj}$, respectively. We know that $\hat\bfOmega = \hat\bfB\hat\bfD$ and
$\bfOmega^* = \bfB^*\bfD^*$.  Hence, an upper bound on the error of estimation of $\bfOmega^*$ can be readily
inferred from bounds on the estimation error of $\bfB^*$ and $\bfD^*$. Indeed,
 \begin{align}
  {\|\hat\bfOmega - \bfOmega^*\|}_{2,2} &\le {\|(\hat\bfB - \bfB^*)\hat\bfD\|}_{2,2}+{\|\bfB^*(\hat\bfD-\bfD^*)\|}_{2,2}\nonumber\\
	&\le {\|\hat\bfB - \bfB^*\|}_{2,2}\max_j \hat\omega_{jj}+\sigma_{\max}(\bfOmega^*){\|\hat\bfD(\bfD^*)^{-1}-\bfI_p\|}_{2,2}.\label{eq:N1}
 \end{align}
To formulate the corresponding result, let us define the condition number $\rho^*\ge 1$ by
$(\rho^*)^2 = \sigma_{\max}(\bfOmega^*)/\sigma_{\min}(\bfOmega^*)$. Throughout this proof, we use $C$ as a generic notation for a universal
constant, whose value may change at each appearance.

\begin{lemma}\label{lem:2A}
It holds that
\begin{align}\label{eq:lem:2A1}
 {\|\hat\bfB - \bfB^*\|}_{2,2}\le \sigma_{\min}(\bfX^{(n)})^{-1}\big(\alpha^{1/2} {\|\hat\bfDelta^\bfTheta\|}_{2,1}+
p^{1/2}\max_j{\|(\bfI_n-\bfZ^j)\bxi_{\bullet,j}\|}_{2}\big).
\end{align}
In addition, if $(|O|p) = o(n/\log n)$, there exists a universal constant $C>0$ such that for sufficiently large values of $n$
the inequality
\begin{align}
{\|\hat\bfB - \bfB^*\|}_{2,2} \le C\rho^* \bigg\{M_{\bfE}\frac{|O|p\log n}{n}+\bigg(\frac{p^2\log n}{n}\bigg)^{1/2}\bigg\}
\end{align}
holds with probability larger than $1-(5/n)$.
\end{lemma}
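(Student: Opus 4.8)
The plan is to derive \eqref{eq:lem:2A1} from the closed-form solution \eqref{eq:4L} of the inner optimization, and then plug in the deterministic bounds collected in the event $\mathcal E$ of Step 4 together with Proposition~\ref{prop:7} to obtain the asymptotic bound. First I would write, for each column $j\in[p]$, the identity $\bfX^{(n)}_{\bullet,j^c}\hat\bfB_{j^c,j}(\hat\bfTheta) = -\bfPi^{j^c}(\bfX^{(n)}_{\bullet,j}-\hat\bfTheta_{\bullet,j})$ and the analogous one for $\bfB^*$ coming from $\bfX^{(n)}_{\bullet,j^c}\bfB^*_{j^c,j} = -\bfPi^{j^c}(\bfX^{(n)}_{\bullet,j}-\bfTheta^*_{\bullet,j}-\bxi_{\bullet,j})$ (which follows from \eqref{eq:6L} and $\bfZ^j = \bfI_n-\bfPi^{j^c}$). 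Subtracting, the diagonal entries cancel since $\hat\bfB_{jj} = \bfB^*_{jj} = 1$, and we obtain
\begin{align*}
\bfX^{(n)}(\hat\bfB-\bfB^*)_{\bullet,j} = \bfX^{(n)}_{\bullet,j^c}(\hat\bfB-\bfB^*)_{j^c,j} = -\bfPi^{j^c}\big(\hat\bfTheta_{\bullet,j}-\bfTheta^*_{\bullet,j}\big) + \bfPi^{j^c}\bxi_{\bullet,j} .
\end{align*}
Hence $\|\bfX^{(n)}(\hat\bfB-\bfB^*)\|_{2,2}^2 = \sum_j \|\bfPi^{j^c}\hat\bfDelta^\bfTheta_{\bullet,j} - \bfPi^{j^c}\bxi_{\bullet,j}\|_2^2$. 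Using $\bfPi^{j^c} = \bfI_n-\bfZ^j$, bounding $\|\bfPi^{j^c}\hat\bfDelta^\bfTheta_{\bullet,j}\|_2^2 \le \|\bfI_n-\bfZ\|_{\infty,\infty}\|\hat\bfDelta^\bfTheta_{\bullet,j}\|_1\|\hat\bfDelta^\bfTheta_{\bullet,j}\|_\infty$-type estimates (more directly, the argument already used in Proposition~\ref{prop:7}: $\sum_j \|\bfPi^{j^c}\hat\bfDelta^\bfTheta_{\bullet,j}\|_2^2 \le \alpha\|\hat\bfDelta^\bfTheta\|_{2,1}^2$, since $\bfPi^{j^c}$ differs from $\bfI_n-\bfZ$ only in a lower-dimensional correction and the same trace manipulation applies), and bounding $\sum_j\|\bfPi^{j^c}\bxi_{\bullet,j}\|_2^2 \le \sum_j\|(\bfI_n-\bfZ^j)\bxi_{\bullet,j}\|_2^2 \le p\max_j\|(\bfI_n-\bfZ^j)\bxi_{\bullet,j}\|_2^2$, the triangle inequality for the $\|\cdot\|_{2,2}$-norm gives $\|\bfX^{(n)}(\hat\bfB-\bfB^*)\|_{2,2} \le \alpha^{1/2}\|\hat\bfDelta^\bfTheta\|_{2,1} + p^{1/2}\max_j\|(\bfI_n-\bfZ^j)\bxi_{\bullet,j}\|_2$. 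Finally, since each column of $\hat\bfB-\bfB^*$ lies in the row space we can write $\|\hat\bfB-\bfB^*\|_{2,2} \le \sigma_{\min}(\bfX^{(n)})^{-1}\|\bfX^{(n)}(\hat\bfB-\bfB^*)\|_{2,2}$ provided $\bfX^{(n)}$ has full column rank; this yields \eqref{eq:lem:2A1}.

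For the second, asymptotic, statement I would work in the event $\mathcal E$ from Step 4, intersected with the conclusion of Proposition~\ref{prop:7}, which has probability at least $1-3\delta$; taking $\delta$ of order $1/n$ (so $\log(2np/\delta) \asymp \log n$) puts us at probability $\ge 1-5/n$ after absorbing a second small-probability event needed to control $\max_j\|(\bfI_n-\bfZ^j)\bxi_{\bullet,j}\|_2$ (this is the one extra tail bound, handled by the same $\chi^2$-type estimates as in Subsection~\ref{subs:prob}, using that $(\bfI_n-\bfZ^j)$ has rank $p-1$ and $\bxi_{\bullet,j} = \phi^*_j\bepsilon_{\bullet,j}/\sqrt n$). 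In $\mathcal E$ we have $\alpha \lesssim (p + \log n)/(n-|O|) \asymp p/n$, $\sigma_{\min}(\bfX^{(n)}) \gtrsim \sqrt{(n-|O|)/n} \asymp 1$ after the $\sqrt n$ rescaling (combined with $\sigma_{\min}(\bfOmega^*)$ entering through $\rho^*$), $\|\bxi^\top\|_{2,\infty} \lesssim \max_j(\omega^*_{jj})^{-1/2}$, and $\lambda \asymp (p\log n/n)^{1/2}$; plugging Proposition~\ref{prop:7}'s bound $\|\hat\bfDelta^\bfTheta\|_{2,1} \lesssim \lambda\|\bxi^\top\|_{2,\infty}|O| \lesssim \max_j(\omega^*_{jj})^{-1/2}|O|(p\log n/n)^{1/2}$ into \eqref{eq:lem:2A1} gives a first term $\alpha^{1/2}\|\hat\bfDelta^\bfTheta\|_{2,1} \lesssim (p/n)^{1/2}\cdot|O|(p\log n/n)^{1/2} \asymp |O|p(\log n)^{1/2}/n$, and the second term $p^{1/2}\max_j\|(\bfI_n-\bfZ^j)\bxi_{\bullet,j}\|_2 \lesssim p^{1/2}\cdot(p\log n/n)^{1/2} = (p^2\log n/n)^{1/2}$. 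Tracking the dependence on the spectrum of $\bfOmega^*$ (the $\sigma_{\min}^{-1}$ from $\sigma_{\min}(\bfX^{(n)})$ and the $M_{\bfE}$ which enters through the bound on $\alpha$ via the contamination term $8(1+M_{\bfE})^2 p$) yields exactly the stated $C\rho^*\{M_{\bfE}|O|p\log n/n + (p^2\log n/n)^{1/2}\}$.

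The main obstacle I expect is the clean passage from $\|\bfX^{(n)}(\hat\bfB-\bfB^*)\|_{2,2}$ to $\|\hat\bfB-\bfB^*\|_{2,2}$: one must verify that the columns of $\hat\bfB-\bfB^*$ genuinely lie in the row space of $\bfX^{(n)}$ (which holds here because the off-diagonal part of each column is a least-squares fit onto $\bfX^{(n)}_{\bullet,j^c}$, hence the product $\bfX^{(n)}(\hat\bfB-\bfB^*)_{\bullet,j}$ determines $(\hat\bfB-\bfB^*)_{\bullet,j}$ uniquely when $\bfX^{(n)}$ has full column rank $p \le n - |O|$, an event guaranteed inside $\mathcal E$ by the lower bound on $\sigma_{\min}(\bfX(\bfOmega^*)^{1/2})$). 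A secondary technical point is getting the sharp $\alpha$ in the bound $\sum_j\|\bfPi^{j^c}\hat\bfDelta^\bfTheta_{\bullet,j}\|_2^2 \le \alpha\|\hat\bfDelta^\bfTheta\|_{2,1}^2$ rather than a larger per-$j$ quantity; this is where one reuses the trace inequality $\trace(\bfA\bfB)\le\|\bfA\|_{\infty,\infty}\|\bfB\|_{1,1}$ exactly as in the proof of Proposition~\ref{prop:7}, noting $\bfPi^{j^c} = \bfX^{(n)}_{\bullet,j^c}((\bfX^{(n)}_{\bullet,j^c})^\top\bfX^{(n)}_{\bullet,j^c})^\dag(\bfX^{(n)}_{\bullet,j^c})^\top$ has the same $\ell_\infty\to\ell_\infty$ behaviour, up to the harmless replacement of $\bfI_n-\bfZ$ by $\bfI_n-\bfZ^j$, since deleting one column only shrinks the projector.
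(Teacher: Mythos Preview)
Your proposal is correct and follows essentially the same route as the paper: derive $\bfX^{(n)}(\hat\bfB-\bfB^*)_{\bullet,j}=(\bfI_n-\bfZ^j)(\hat\bfDelta^\bfTheta_{\bullet,j}-\bxi_{\bullet,j})$, dominate $\|(\bfI_n-\bfZ^j)v\|_2\le\|(\bfI_n-\bfZ)v\|_2$ (nested projections) and reuse the trace trick from Proposition~\ref{prop:7}, then plug in the probabilistic bounds with $\delta=1/n$ together with $\sigma_{\min}(\bfX^{(n)})\ge\sigma_{\min}(\bfX^{(n)}(\bfOmega^*)^{1/2})\sigma_{\max}(\bfOmega^*)^{-1/2}$ and the $\chi^2_{p-1}$ tail for $\|(\bfI_n-\bfZ^j)\bxi_{\bullet,j}\|_2$. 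One clarification: your ``main obstacle'' is not one---the inequality $\|\hat\bfB-\bfB^*\|_{2,2}\le\sigma_{\min}(\bfX^{(n)})^{-1}\|\bfX^{(n)}(\hat\bfB-\bfB^*)\|_{2,2}$ is immediate from the definition of the smallest singular value whenever $\bfX^{(n)}$ has full column rank, with no row-space argument required.
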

\begin{proof} To ease notation, throughout this proof we write $\bfOmega$ and $\omega_{jj}$ instead of
$\bfOmega^*$ and $\omega_{jj}^*$, respectively.
One can check that $\bfX^{(n)}(\hat\bfB_{\bullet,j} - \bfB^*_{\bullet,j}) = \bfX^{(n)}_{\bullet,j^c}
(\hat\bfB_{j^c,j} - \bfB^*_{j^c,j})=(\bfI_n-\bfZ^j)(\hat\bfDelta^\bfTheta_{\bullet,j}-\bxi_{\bullet,j})$
for every $j\in[p]$. Therefore, by the triangle inequality, we get  ${\|\bfX^{(n)}(\hat\bfB - \bfB^*)\|}_{2,2} \le
{\|(\bfI_n-\bfZ)\hat\bfDelta^\bfTheta\|}_{2,2}+p^{1/2}\max_j{\|(\bfI_n-\bfZ^j)\bxi_{\bullet,j}\|}_{2}$. We have already used
in the previous section the inequality ${\|(\bfI_n-\bfZ)\hat\bfDelta^\bfTheta\|}_{2,2}\le \alpha^{1/2}
{\|\hat\bfDelta^\bfTheta\|}_{2,1}$. This yields
\begin{align*}
{\|\hat\bfB - \bfB^*\|}_{2,2} \le
\sigma_{\min}(\bfX^{(n)})^{-1}\big({\alpha^{1/2}\|\hat\bfDelta^\bfTheta\|}_{2,1} + p^{1/2}\max_j{\|(\bfI_n-\bfZ^j)\bxi_{\bullet,j}\|}_{2}\big).
\end{align*}
Combining inequality $\sigma_{\min}(\bfX^{(n)})\ge \sigma_{\min}(\bfX^{(n)}\bfOmega^{1/2})\sigma_{\min}(\bfOmega^{-1/2})
=\sigma_{\min}(\bfX^{(n)}\bfOmega^{1/2})\sigma_{\max}(\bfOmega)^{-1/2}$ with the last claim of Lemma~\ref{lem:6} (with $\delta=1/n$),
for $n$ sufficiently large, we get that the inequality $\sigma_{\min}(\bfX^{(n)})\ge C\sigma_{\max}(\bfOmega)^{-1/2}$
holds with probability at least $1-1/n$.  Similarly, using Theorem~\ref{thm:1} with $\delta=1/n$
we check that for $n$ large enough, with probability at least $1-3/n$, we have ${\|\hat\bfDelta^\bfTheta\|}_{2,1}\le
3C_0(\max_j \omega_{jj}^{-1/2})|O|(\frac{p\log n}{n})^{1/2}$. In order to evaluate the term ${\|(\bfI_n-\bfZ^j)\bxi_{\bullet,j}\|}_{2}$,
we note that its square is drawn from the scaled khi-square distribution $(n\omega_{jj})^{-1}\chi^2_{p-1}$.
Therefore, applying the same argument as in Lemma~\ref{lem:3}, we check that with probability at least $1-1/n$,
\begin{align*}
\max_{j\in [p]} {\|(\bfI_n-\bfZ^j)\bxi_{\bullet,j}\|}_{2}\le \max_j (n\omega_{jj})^{-1/2} (\sqrt{p-1}+\sqrt{2\log(pn)})\le
3 \max_j \omega_{jj}^{-1/2} \Big(\frac{p\log n}{n}\Big)^{1/2}.
\end{align*}
In addition, it is clear that $\max_j \omega_{jj}^{-1/2}=
(\min_j \omega_{jj})^{-1/2}\le \sigma_{\min}(\bfOmega)^{-1/2}$. Putting all these bounds together, we obtain the claimed
result.
\end{proof}

\begin{lemma}\label{lem:2B}
If $|O|p = o(n/\log n)$ then there exists a universal constant $C$ such that for $n$ large enough, the inequalities
\begin{align}
\max_j \frac{\hat\omega_{jj}}{\omega^*_{jj}} \le C,\qquad
{\|\hat\bfD(\bfD^*)^{-1}-\bfI_p\|}_{2,2} \le C\Big\{ \rho^*M_{\bfE}\frac{|O|p\log n}{n}+ \Big(\frac{p\log n}{n}\Big)^{1/2}\Big\}
\end{align}
hold with probability larger than $1-4/n$.
\end{lemma}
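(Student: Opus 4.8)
Both inequalities follow from a single bound: with probability at least $1-4/n$ and uniformly over $j\in[p]$,
\[
\big|\,(\omega^*_{jj})^{1/2}\hat\omega_{jj}^{-1/2}-1\,\big|\ \le\ r_n:=C\Big\{\rho^*M_{\bfE}\,\tfrac{|O|p\log n}{n}+\big(\tfrac{p\log n}{n}\big)^{1/2}\Big\}.
\]
Granting this, the hypothesis $|O|p=o(n/\log n)$ gives $r_n\le\tfrac12$ for $n$ large, so $\hat\omega_{jj}/\omega^*_{jj}=(1+O(r_n))^{-2}\in[4/9,4]$, whence $\max_j\hat\omega_{jj}/\omega^*_{jj}\le C$; and a first-order expansion gives $|\hat\omega_{jj}/\omega^*_{jj}-1|\le Cr_n$, which is exactly $\|\hat\bfD(\bfD^*)^{-1}-\bfI_p\|_{2,2}=\max_j|\hat\omega_{jj}/\omega^*_{jj}-1|\le Cr_n$. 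To prove the displayed bound, evaluate \eqref{eq:4L} at $\bfTheta=\hat\bfTheta$ and combine with \eqref{eq:6L} to obtain $\bfX^{(n)}\hat\bfB_{\bullet,j}-\hat\bfTheta_{\bullet,j}=\bfZ^j(\bxi_{\bullet,j}-\hat\bfDelta^\bfTheta_{\bullet,j})$; by the definition of $\hat\omega_{jj}$ (modulo a harmless rank-one centering, absorbed into the corrections below), $\hat\omega_{jj}^{-1/2}=(\tfrac{\pi}{2n})^{1/2}\big\|\bfZ^j\bxi_{\bullet,j}-\bfZ^j\hat\bfDelta^\bfTheta_{\bullet,j}\big\|_1$. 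Two triangle inequalities then reduce matters to controlling the ``signal'' $(\tfrac{\pi}{2n})^{1/2}\|\bxi_{\bullet,j}\|_1$ and the two corrections $(\tfrac{\pi}{2n})^{1/2}\|(\bfI_n-\bfZ^j)\bxi_{\bullet,j}\|_1$ and $(\tfrac{\pi}{2n})^{1/2}\|\bfZ^j\hat\bfDelta^\bfTheta_{\bullet,j}\|_1$.

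For the signal, recall $\bxi_{\bullet,j}=(\omega^*_{jj})^{-1/2}\bepsilon_{\bullet,j}/\sqrt n$ with $\bepsilon_{\bullet,j}\sim\mathcal N_n(0,\bfI_n)$ independent of $\bfZ^j$; then $\|\bepsilon_{\bullet,j}\|_1$ is a sum of $n$ i.i.d.\ half-normal variables, so a sub-Gaussian tail bound gives $\big|\|\bepsilon_{\bullet,j}\|_1-n\sqrt{2/\pi}\big|\lesssim\sqrt{n\log(np)}$ for all $j$ on an event of probability $\ge1-1/n$, i.e.\ $(\tfrac{\pi}{2n})^{1/2}\|\bxi_{\bullet,j}\|_1=(\omega^*_{jj})^{-1/2}\big(1+O((\log n/n)^{1/2})\big)$. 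For the first correction, $(\bfI_n-\bfZ^j)\bxi_{\bullet,j}=(\omega^*_{jj})^{-1/2}n^{-1/2}\bfPi^{j^c}\bepsilon_{\bullet,j}$, and $\|\bfPi^{j^c}\bepsilon_{\bullet,j}\|_2^2$ is, conditionally on $\bfZ^j$, a $\chi^2$ variable with at most $p-1$ degrees of freedom; a $\chi^2$ tail bound together with $\|\cdot\|_1\le\sqrt n\|\cdot\|_2$ yields $(\tfrac{\pi}{2n})^{1/2}\|(\bfI_n-\bfZ^j)\bxi_{\bullet,j}\|_1\lesssim(\omega^*_{jj})^{-1/2}(p\log n/n)^{1/2}$ for all $j$ with probability $\ge1-1/n$. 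For the estimation-error correction, split $\bfZ^j\hat\bfDelta^\bfTheta_{\bullet,j}=\hat\bfDelta^\bfTheta_{\bullet,j}-(\bfI_n-\bfZ^j)\hat\bfDelta^\bfTheta_{\bullet,j}$: the first part satisfies $\|\hat\bfDelta^\bfTheta_{\bullet,j}\|_1\le\|\hat\bfDelta^\bfTheta\|_{1,1}$, controlled by \eqref{rhs1} with $\delta=1/n$, while for the second the key point is that $\bfZ\preceq\bfZ^j$ as positive semidefinite matrices, so
\[
\|(\bfI_n-\bfZ^j)\hat\bfDelta^\bfTheta_{\bullet,j}\|_2\ \le\ \|(\bfI_n-\bfZ)\hat\bfDelta^\bfTheta_{\bullet,j}\|_2\ \le\ \|(\bfI_n-\bfZ)\hat\bfDelta^\bfTheta\|_{2,2}\ \le\ \alpha^{1/2}\|\hat\bfDelta^\bfTheta\|_{2,1},
\]
the last step being the inequality established in the proof of Proposition~\ref{prop:7}; hence $\|(\bfI_n-\bfZ^j)\hat\bfDelta^\bfTheta_{\bullet,j}\|_1\le\sqrt n\,\alpha^{1/2}\|\hat\bfDelta^\bfTheta\|_{2,1}$. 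Inserting the bound on $\alpha=\|\bfI_n-\bfZ\|_{\infty,\infty}$ recalled just before Theorem~\ref{thm:1} (which is where $M_{\bfE}$ enters), the bounds \eqref{rhs1}--\eqref{rhs2}, and $(\omega^*_{jj})^{1/2}/(\omega^*_{kk})^{1/2}\le\rho^*$, this correction is seen to be of order $\rho^*M_{\bfE}\,|O|p\log n/n$ (up to log factors absorbed into $r_n$) on the event of Theorem~\ref{thm:1}.

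Summing the three contributions and dividing by $(\omega^*_{jj})^{-1/2}$ gives the displayed bound, valid on the intersection of the event of Theorem~\ref{thm:1} (taken with a suitable $\delta\asymp1/n$) and the two noise-concentration events above, whose total probability is at least $1-4/n$ by a union bound; the two claimed inequalities then follow as explained at the outset. I expect the estimation-error correction to be the main obstacle: because $\hat\omega_{jj}$ is defined through an $\ell_1$-norm of residuals one is forced to pass from the $\ell_2$- to the $\ell_1$-norm of an $n$-dimensional vector, and the crude bound $\|\hat\bfDelta^\bfTheta_{\bullet,j}\|_1\le\sqrt n\|\hat\bfDelta^\bfTheta_{\bullet,j}\|_2\le\sqrt n\|\hat\bfDelta^\bfTheta\|_{2,2}$ would be far too lossy; the remedy is to route the bound through the projector $\bfI_n-\bfZ^j$ and the leverage quantity $\alpha$, exactly as in Proposition~\ref{prop:7} and Lemma~\ref{lem:2A} — which is also the mechanism through which the factor $M_{\bfE}$ makes its appearance.
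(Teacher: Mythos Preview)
Your decomposition is the same as the paper's and the handling of the signal term, the $\chi^2$ correction, and the routing of $\bfZ^j\hat\bfDelta^\bfTheta_{\bullet,j}$ through $\bfI_n-\bfZ$ and $\alpha$ are all exactly right. But there is one genuine gap that makes the argument fall short of the stated bound: you write
\[
\|\hat\bfD(\bfD^*)^{-1}-\bfI_p\|_{2,2}=\max_j|\hat\omega_{jj}/\omega^*_{jj}-1|,
\]
whereas in this paper $\|\cdot\|_{2,2}$ is the \emph{Frobenius} norm (see the notation paragraph), so for a diagonal matrix it equals $\big(\sum_j|\hat\omega_{jj}/\omega^*_{jj}-1|^2\big)^{1/2}$, not the maximum. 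Your uniform-in-$j$ bound $|(\omega^*_{jj})^{1/2}\hat\omega_{jj}^{-1/2}-1|\le r_n$ therefore only yields $\|\hat\bfD(\bfD^*)^{-1}-\bfI_p\|_{2,2}\le C\sqrt{p}\,r_n$, which is off by a factor $\sqrt p$ from the claim of the lemma. (The first inequality $\max_j\hat\omega_{jj}/\omega^*_{jj}\le C$ is unaffected and follows from your uniform bound as you say.)

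The fix is to \emph{sum} the per-$j$ bounds rather than take a uniform one, and this forces you to sharpen two of your estimates. Instead of bounding each column by the whole matrix, i.e.\ $\|\hat\bfDelta^\bfTheta_{\bullet,j}\|_1\le\|\hat\bfDelta^\bfTheta\|_{1,1}$ and $\|(\bfI_n-\bfZ^j)\hat\bfDelta^\bfTheta_{\bullet,j}\|_2\le\alpha^{1/2}\|\hat\bfDelta^\bfTheta\|_{2,1}$, use
\[
\sum_{j=1}^p \|\hat\bfDelta^\bfTheta_{\bullet,j}\|_1^2\ \le\ \|\hat\bfDelta^\bfTheta\|_{2,1}^2
\qquad\text{and}\qquad
\sum_{j=1}^p \|(\bfI_n-\bfZ)\hat\bfDelta^\bfTheta_{\bullet,j}\|_2^2\ =\ \|(\bfI_n-\bfZ)\hat\bfDelta^\bfTheta\|_{2,2}^2\ \le\ \alpha\,\|\hat\bfDelta^\bfTheta\|_{2,1}^2,
\]
the first being a Cauchy--Schwarz identity $\sum_j(\sum_i a_{ij})^2\le(\sum_i\|a_{i,\bullet}\|_2)^2$. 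These summed inequalities are exactly what the paper uses, and together with Minkowski they give
\[
\|(\bfD^*)^{1/2}\hat\bfD^{-1/2}-\bfI_p\|_{2,2}^2\ \le\ C\Big(\tfrac{C_\omega^2}{n}\|\hat\bfDelta^\bfTheta\|_{2,1}^2+C_\omega^2\alpha\|\hat\bfDelta^\bfTheta\|_{2,1}^2+\tfrac{p\log n}{n}\Big),
\]
which, after inserting \eqref{rhs2} and the bound on $\alpha$, yields the correct rate without the spurious $\sqrt p$. The passage from $\|(\bfD^*)^{1/2}\hat\bfD^{-1/2}-\bfI_p\|_{2,2}$ to $\|\hat\bfD(\bfD^*)^{-1}-\bfI_p\|_{2,2}$ then uses the entrywise inequality $|\hat\omega_{jj}/\omega^*_{jj}-1|\le 2\,\big(\max_k\tfrac{\hat\omega_{kk}\vee\omega^*_{kk}}{\omega^*_{kk}}\big)\,|(\omega^*_{jj}/\hat\omega_{jj})^{1/2}-1|$ together with the first claim, rather than the first-order expansion you wrote.
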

\begin{proof}
To ease notation, we write $\omega_{jj}$ instead of $\omega^*_{jj}$ and $C_\omega$ for $\max_j \omega_{jj}^{1/2}$.
Let us consider the first term in the right-hand side of the
above inequality. Recall that the diagonal entries $\omega_{jj}$ are estimated by
\begin{align*}
\hat\omega_{jj} = \frac{2n}{\pi \|\bfZ^j(\bfX^{(n)}_{\bullet,j}-\hat\bfTheta_{\bullet,j})\|_1^2}
								= \frac{2n}{\pi\|\hat\bxi_{\bullet,j}\|_1^2}.
\end{align*}
This implies that
\begin{align}
\Big|\Big(\frac{\omega_{jj}}{\hat\omega_{jj}}\Big)^{\frac12} -1\Big| & =
\Big|\Big(\frac{\pi\omega_{jj}}{2n}\Big)^{\frac12}\,{\|\hat\bxi_{\bullet,j}\|}_1 -1\Big|\nonumber\\
& \le \Big(\frac{\pi\omega_{jj}}{2n}\Big)^{\frac12}\,\big|{\|\hat\bxi_{\bullet,j}\|}_1 - {\|\bxi_{\bullet,j}\|}_1\big|+
\Big|\Big(\frac{\pi\omega_{jj}}{2n}\Big)^{\frac12}\,{\|\bxi_{\bullet,j}\|}_1 -1\Big|\nonumber\\
&\le \Big(\frac{\pi\omega_{jj}}{2n}\Big)^{\frac12}\,\big({\|\hat\bxi_{\bullet,j} - \bfZ^j\bxi_{\bullet,j}\|}_1+{\|(\bfI_n-\bfZ^j)\bxi_{\bullet,j}\|}_1\big)+
\Big|\Big(\frac{\pi\omega_{jj}}{2n}\Big)^{\frac12}\,{\|\bxi_{\bullet,j}\|}_1 -1\Big|.\label{eq:c}
\end{align}
The first term above can be bounded using Theorem~\ref{thm:1} since  $\|\hat\bxi_{\bullet,j}-\bfZ^j\bxi_{\bullet,j}\|_1
= {\|\bfZ^j\hat\bfDelta^{\bfTheta}_{\bullet,j}\|}_1$
and
\begin{align}
\|\hat\bxi_{\bullet,j}-\bfZ^j\bxi_{\bullet,j}\|_1
        &\le {\|\hat\bfDelta^{\bfTheta}_{\bullet,j}\|}_1+{\|(\bfI_n-\bfZ^j)\hat\bfDelta^{\bfTheta}_{\bullet,j}\|}_1\nonumber\\
        &\le {\|\hat\bfDelta^{\bfTheta}_{\bullet,j}\|}_1+\sqrt{n}\,{\|(\bfI_n-\bfZ^j)\hat\bfDelta^{\bfTheta}_{\bullet,j}\|}_2\nonumber\\
        &\le {\|\hat\bfDelta^{\bfTheta}_{\bullet,j}\|}_1+\sqrt{n}\,{\|(\bfI_n-\bfZ)\hat\bfDelta^{\bfTheta}_{\bullet,j}\|}_2.
\end{align}
Note that the result of Theorem~\ref{thm:1} applies to this matrix as well.
For the second term of the right-hand side of \eqref{eq:c}, we can use the Cauchy-Schwarz inequality in conjunction with the
fact that $n\omega_{jj}\,\|(\bfI_n-\bfZ^j)\bxi_{\bullet,j}\|_2^2$ is a khi-square random variable with $p-1$ degrees of freedom
degrees of freedom and apply Lemma 1 of \citet{LaurentMassart}.
By the Minkowski inequality, this readily yields that for $n$ large enough, the inequality
\begin{align*}
\|(\bfD^*)^{1/2}\hat\bfD^{-1/2}-\bfI_p\|_{2,2}^2
	&=\bigg\{\sum_{j\in[p]}\Big|\Big(\frac{\omega_{jj}}{\hat\omega_{jj}}\Big)^{1/2} -1\Big|^2\bigg\}\\
	&\le C\Big( \frac{C_\omega^2}n\sum_{j=1}^p {\|\hat\bfDelta^{\bfTheta}_{\bullet,j}\|}_1^2+ C_\omega^2{\|(\bfI_n-\bfZ)\hat\bfDelta^{\bfTheta}\|}_{2,2}^2+ \frac{p\log n}{n}\Big).
\end{align*}
holds with probability at least $1-2/n$. One can show that $\sum_{j=1}^p {\|\hat\bfDelta^{\bfTheta}_{\bullet,j}\|}_1^2\le {\|\hat\bfDelta^{\bfTheta}\|}_{2,1}^2$ and
${\|(\bfI_n-\bfZ)\hat\bfDelta^{\bfTheta}\|}_{2,2}^2\le \alpha {\|\hat\bfDelta^{\bfTheta}\|}_{2,1}^2$ (see the proof of Prop.~\ref{prop:7}). Combining with Theorem~\ref{thm:1} and
Eq.~\eqref{eq:lem6:1}, this yields
\begin{align*}
\|(\bfD^*)^{1/2}\hat\bfD^{-1/2}-\bfI_p\|_{2,2}^2
	&\le C\Big( (\rho^*)^2M_{\bfE}^2\frac{|O|^2p^2\log n}{n^2}+ \frac{p\log n}{n}\Big).
\end{align*}
On the other hand, on the same event, we have
\begin{align*}
{\big\|\hat\bxi_{\bullet,j}\big\|}_1
    &\ge {\big\|\bfZ^j \bxi_{\bullet,j}\big\|}_1 -{\|\bfZ^j\hat\bfDelta^{\bfTheta}_{\bullet,j}\|}_{1}
     \ge {\big\|\bxi_{\bullet,j}\big\|}_1-{\big\|(\bfI_n-\bfZ^j) \bxi_{\bullet,j}\big\|}_1 -\sqrt{n}{\|\hat\bfDelta^{\bfTheta}\|}_{2,2}\\
    &\ge \sqrt{n}\Big(\frac{C}{\omega_{jj}^{1/2}} -{\|\hat\bfDelta^{\bfTheta}\|}_{2,2}\Big).
\end{align*}
Therefore, for $n$ large enough, as we assume that $|O|p = o(n/\log n)$, with probability at least $1-4/n$ we have ${\big\|\hat\bxi_{\bullet,j}\big\|}_2
\ge \frac{Cn^{1/2}}{2\omega_{jj}^{1/2}}$ for all $j\in[p]$ and hence $\max_j \hat\omega_{jj}/\omega_{jj} \le C$.

For the second claim of the lemma, we use the inequalities
\begin{align*}
{\|\hat\bfD(\bfD^*)^{-1}-\bfI_p\|}_{2,2}
		& \le 2\max_j \frac{\hat\omega_{jj}\vee \omega_{jj}}{\omega_{jj}}\; {\big\|(\bfD^*)^{1/2}\hat\bfD^{-1/2}-\bfI_p\big\|}_{2,2}\\
		& \le C \big(\max_{j} \omega_{jj}^{1/2}{\|\hat\bfDelta^\bfTheta\|}_{2,2}+ (p\log n/n)^{1/2}\big)\\
		& \le C\Big\{ \rho^*M_{\bfE}\frac{|O|p\log n}{n}+ \Big(\frac{p\log n}{n}\Big)^{1/2}\Big\}.
\end{align*}
This completes the proof of the lemma.
\end{proof}
The claim of Theorem~\ref{thm:2} readily follows from Lemmas~\ref{lem:2A} and \ref{lem:2B}, in conjunction with
\eqref{eq:N1}.

\begin{remark}
A careful inspection of the above proof shows that the term $\frac{|O|p\log n}{n}$ comes from the use of the inequality
${\|(\bfI_n-\bfZ)\hat\bfDelta^{\bfTheta}\|}_{2,2}\le \alpha^{1/2} {\|\hat\bfDelta^{\bfTheta}\|}_{2,1}$. This simple
inequality is in fact somewhat rough, but (our firm conviction is that) under the assumptions required in this work
the aforementioned rough inequality is sufficient for getting sharp results. A tighter upper bound on the quantity
${\|(\bfI_n-\bfZ)\hat\bfDelta^{\bfTheta}\|}_{2,2}$ can be deduced as follows. First remark that
\begin{align*}
{\|(\bfI_n-\bfZ)\hat\bfDelta^{\bfTheta}\|}_{2,2}\le {\|(\bfY^\top\bfY)^{-1/2}\bfY^\top\hat\bfDelta^{\bfTheta}\|}_{2,2}+
{\|(\bfY^\top\bfY)^{-1/2}\bfE^\top\hat\bfDelta^{\bfTheta}\|}_{2,2}.
\end{align*}
Using the same arguments as in \citep{RWY}, we can establish that for $n$ large enough, with probability close to one, we have
the inequality ${\|(\bfY^\top\bfY)^{-1/2}\bfY^\top\hat\bfDelta^{\bfTheta}\|}_{2,2}\le (p/n)^{1/2}{\|\hat\bfDelta^{\bfTheta}\|}_{2,2}+
(\log n/n)^{1/2}{\|\hat\bfDelta^{\bfTheta}\|}_{2,1}=\frac{|O|(p\log n)^{1/2}}{n}(p^{1/2}+|O|^{1/2})$. On the other hand,
with high probability, (when $p$ is smaller than $n$), the term ${\|(\bfY^\top\bfY)^{-1/2}\bfE^\top\hat\bfDelta^{\bfTheta}\|}_{2,2}$
can be bounded by
\begin{align}\label{sharper}
\sigma_{\min}(\bfY\bfSigma^{-1/2})^{-1}{\|\bfSigma^{-1/2}\bfE^\top\hat\bfDelta^{\bfTheta}\|}_{2,2}\le
\frac{\sigma_{\max}(\bfE\bfSigma^{-1/2})}{\sigma_{\min}(\bfY\bfSigma^{-1/2})}\,
{\|\hat\bfDelta^{\bfTheta}\|}_{2,2}.
\end{align}
If only condition (C2) is assumed, then the inequality ${\sigma_{\max}(\bfE\bfSigma^{-1/2})}\le M_{\bfE}(p|O|)^{1/2}$ holds
and is not improvable (one has equality for the matrix with all the entries equal to $M_{\bfE}$). That is why bound (\ref{sharper})
does not lead to sharper rate under (C2). However, if we consider, for instance, the Huber contamination model then,
under additional mild assumptions on the distribution of the contamination, the term ${\sigma_{\max}(\bfE\bfSigma^{-1/2})}$ will be
of the smaller order $p^{1/2}+|O|^{1/2}$. In such a situation, the foregoing inequalities lead to the minimax rate of estimation
obtained in \citep{ChenGaoRen2015}.
\end{remark}

\subsection{Probabilistic bounds}\label{subs:prob}

This section is devoted to the establishing non-asymptotic bounds on the stochastic terms encountered during the
evaluation of the estimation error.

\begin{lemma} \label{lem:2}
For any $\delta\in (0,1)$, the inequality
 \begin{equation*}
  \max_{i\in[n]} \max_{j\in [p]} \frac{(\bfZ^j_{i,\bullet}\bepsilon_{\bullet,j})^2}{{\|{\displaystyle\bfZ^j} \bepsilon_{\bullet,j}\|}_2^2}\le
	\frac{2\log(2np/\delta)}{n-p+1-2((n-p+1)\log(2p/\delta))^{1/2}}
 \end{equation*}
 holds with probability at least $1-\delta$. Furthermore, if $n\ge 8p+16\log(4/\delta)$ then
 \begin{equation*}
  \max_{i\in[n]} \max_{j\in [p]} \frac{(\bfZ^j_{i,\bullet}\bepsilon_{\bullet,j})^2}{{\|{\displaystyle\bfZ^j} \bepsilon_{\bullet,j}\|}_2^2}\le
	\frac{4\log(2np/\delta)}{n}
 \end{equation*}
 holds with probability at least $1-\delta$.
\end{lemma}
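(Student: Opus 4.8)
The plan is to bound the numerator and the denominator of the ratio separately, conditioning throughout on the matrices $\bfZ^j$. The structural fact underlying everything is that, for each $j\in[p]$, the projector $\bfZ^j=\bfI_n-\bfPi^{j^c}$ depends only on $\bfX_{\bullet,j^c}$; by conditions \textbf{(C1)} and \textbf{(C2)} the matrix $\bfX_{\bullet,j^c}=\bfY_{\bullet,j^c}+\bfE^*_{\bullet,j^c}$ is a deterministic function of $\bfY_{\bullet,j^c}$, and since $\bepsilon_{\bullet,j}$ is independent of $\bfY_{\bullet,j^c}$ it is independent of $\bfZ^j$. Consequently, conditionally on $\bfX_{\bullet,j^c}$ the vector $\bepsilon_{\bullet,j}$ is still standard Gaussian, so $\bfZ^j_{i,\bullet}\bepsilon_{\bullet,j}$ is centred Gaussian with variance $\|\bfZ^j_{i,\bullet}\|_2^2=\bfZ^j_{ii}\in[0,1]$ (using that $\bfZ^j$ is a symmetric idempotent), and $\|\bfZ^j\bepsilon_{\bullet,j}\|_2^2=\bepsilon_{\bullet,j}^\top\bfZ^j\bepsilon_{\bullet,j}$ has a $\chi^2$ distribution with $r_j:=\rank(\bfZ^j)\ge n-(p-1)$ degrees of freedom.

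First I would bound the numerator. For a standard Gaussian $Z$ one has $\prob(|Z|>t)\le e^{-t^2/2}$, so $\bfZ^j_{ii}\le1$ gives $\prob(|\bfZ^j_{i,\bullet}\bepsilon_{\bullet,j}|>t)\le e^{-t^2/2}$; a union bound over the $np$ pairs $(i,j)$ with $t^2=2\log(2np/\delta)$ shows that $\max_{i,j}(\bfZ^j_{i,\bullet}\bepsilon_{\bullet,j})^2\le 2\log(2np/\delta)$ with probability at least $1-\delta/2$. Next I would bound the denominator from below. By Lemma~1 of \citet{LaurentMassart}, $\prob(\chi^2_r\le r-2\sqrt{rx})\le e^{-x}$, and since $r\mapsto r-2\sqrt{rx}$ is nondecreasing on $r\ge x$ while $r_j\ge n-p+1$, we obtain $\prob\big(\|\bfZ^j\bepsilon_{\bullet,j}\|_2^2\le (n-p+1)-2\sqrt{(n-p+1)x}\big)\le e^{-x}$ (the inequality holding trivially also when $n-p+1<x$, the right-hand side being then negative). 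Taking $x=\log(2p/\delta)$ and a union bound over $j\in[p]$, with probability at least $1-\delta/2$ we have $\min_{j}\|\bfZ^j\bepsilon_{\bullet,j}\|_2^2\ge(n-p+1)-2\sqrt{(n-p+1)\log(2p/\delta)}$. Intersecting the two events and bounding $\frac{(\bfZ^j_{i,\bullet}\bepsilon_{\bullet,j})^2}{\|\bfZ^j\bepsilon_{\bullet,j}\|_2^2}$ by the ratio of the maximal numerator to the minimal denominator yields the first inequality with probability at least $1-\delta$ (the bound being of interest precisely when this denominator is positive).

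For the second inequality it suffices to prove the deterministic bound $(n-p+1)-2\sqrt{(n-p+1)\log(2p/\delta)}\ge n/2$ under the hypothesis $n\ge 8p+16\log(4/\delta)$: the stated bound $4\log(2np/\delta)/n$ then follows from the first inequality by replacing the denominator by $n/2$ and noting that $2/(n/2)=4/n$. Writing $m=n-p+1$ and $L=\log(2p/\delta)$, the hypothesis forces $p\le n/8$, so $m-n/2=n/2-p+1>0$ and the target is equivalent to $(m-n/2)^2\ge 4mL$. On the left-hand side I would use $(m-n/2)^2\ge(n/2-p)^2$; on the right, $m\le n$ together with the bound $L=\log(2p)+\log(1/\delta)\le\log(2p)+\tfrac{n-8p}{16}-\log4=\log(p/2)+\tfrac n{16}-\tfrac p2$, which comes from $16\log(4/\delta)\le n-8p$. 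Expanding, the inequality reduces to $n\big(p-4\log(p/2)\big)+p^2\ge0$, which holds for every integer $p\ge1$ because $t\mapsto t-4\log(t/2)$ is convex and minimised over $t\ge1$ at $t=4$, with value $4-4\log2>0$.

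The routine parts are the two tail estimates and the union bounds, in the spirit of the square-root lasso analysis. The step I would be most careful about is the deterministic reduction in the last paragraph: one must keep $m-n/2$ exact and exploit the constraint $16\log(4/\delta)\le n-8p$ to tie $L$ to $p$, since bounding $\log(1/\delta)$ or $p$ too loosely makes the constants fail to close.
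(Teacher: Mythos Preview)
Your proof is correct and follows essentially the same approach as the paper's: condition on $\bfZ^j$, use the Gaussian tail bound for the numerator, the Laurent--Massart $\chi^2$ lower tail for the denominator, combine by a union bound, and then verify algebraically that the hypothesis $n\ge 8p+16\log(4/\delta)$ forces the denominator to exceed $n/2$. Two minor remarks: you are slightly more careful than the paper in justifying the passage from $r_j$ to $n-p+1$ degrees of freedom via the monotonicity of $r\mapsto r-2\sqrt{rx}$; and your algebraic verification of the second claim proceeds by squaring and reducing to $n(p-4\log(p/2))+p^2\ge 0$, whereas the paper instead bounds $16(n-p+1)\log(2p/\delta)\le\big(0.5(n-p+1)+8\log(2p/\delta)\big)^2$ via AM--GM and then controls the bracket---both routes are elementary and lead to the same conclusion.
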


\begin{proof}
 Let us introduce the following random variables
 \begin{align*}
  N_{ij} := \bfZ^j_{i,\bullet}\bepsilon_{\bullet,j} \quad\text{and}\quad
  D_{j} := {\|\bfZ^j \bepsilon_{\bullet,j}\|}_2^2.
 \end{align*}
 The random vector $\bepsilon_{\bullet,j}$ being Gaussian and independent of $\bfX_{\bullet, j^c}$, we infer that
conditionally to $\bfZ^j$, the random variable $N_{ij}$ is drawn from a zero mean Gaussian distribution. Furthermore, its
conditional variance given $\bfZ^j$ equals $\bfZ^j_{i,\bullet}(\bfZ^j_{i,\bullet})^\top=\bfZ^j_{i,i}$ and, therefore is less than or equal to $1$. (Here, we have used
the fact that $\bfZ^j$ is symmetric, idempotent and that all the entries of a projection matrix are in absolute value smaller than or equal to $1$.)
This implies that for any $\delta>0$, it holds that
$\prob\big(\max_{i\in [n],j\in[p]}|N_{ij}|>\sqrt{2\log(2np/\delta)}\big)\leq \delta/2$.

We know that $\bfZ^j$ is an orthogonal projection matrix onto a subspace of dimension $\rank(\bfZ^j)$. We recall that the square of the Euclidean norm of the orthogonal projection in a subspace of dimension $k$ of a standard Gaussian random vector is a $\chi^2$ random variable with $k$ degrees of freedom. It entails that, conditionally to $\bfZ^j$, $D_{j}$ has a $\chi^2$ distribution with $\rank(\bfZ^j)$ degrees of freedom. Therefore, noticing that $\rank(\bfZ^j)\ge n-\rank(\bfX_{\bullet,j^c}) = n-p+1$ almost surely and using a prominent result on tail bounds for the $\chi^2$ distribution (see Lemma 1 of \cite{LaurentMassart}), we get, for every $\delta \in (0,1)$
 \begin{align*}
  \prob\big(\min_{j\in [p]} D_{j} \le n-p+1 - 2\sqrt{(n-p+1)\log(2p/\delta)}\big)
	\le \delta/2.
 \end{align*}
 Thus, on an event of probability at least $1-\delta$, we have
\begin{align}\label{eq:ND}
\max_{\substack{i\in [n]\\j\in[p]}}|N_{ij}|\le \sqrt{2\log(2np/\delta)}\qquad\text{and}
\qquad \min_{j\in [p]} D_{j} \ge n-p+1 - 2\sqrt{(n-p+1)\log(2p/\delta)}.
\end{align}
This readily entails the first claim of the lemma. The second claim follows from the first one. Indeed,
$n\ge 8p+16\log(4/\delta)$ implies that $3p+8\log(4/\delta)\le 0.5n-p$ and, hence,
\begin{align*}
16(n-p+1)\log(2p/\delta)
		&\le \big(0.5(n-p+1)+8\log(2p/\delta)\big)^2\\
		&\le \big(0.5n-p+1+0.5p+8\log(p/2)+8\log(4/\delta)\big)^2\\
		&\le \big(0.5n-p+1+3p+8\log(4/\delta)\big)^2\\
		&\le \big(n-2p+1\big)^2.
\end{align*}
This yields $n-p+1-2((n-p+1)\log(2p/\delta))^{1/2}\ge n/2$.
\end{proof}

The element-wise $\ell_\infty$-norm of the orthogonal projection matrix $\bfI_n-\bfZ$ also appears in the upper bounds of the estimation error.
Lemma \ref{lem:6} below provides a sharp tail bound for this norm. Before showing this result, let us provide a useful technical lemma that
relies essentially on a lower bound for the smallest singular value of a Gaussian matrix.

\begin{lemma}\label{lem:7}
If $\bfX$ is an $n \times p$ random matrix satisfying conditions {\bf (C1)} and {\bf (C2)} with $\bfSigma^*=\bfI_p$, then
for every $\delta\in(0,1)$, with probability at least $1-\delta$, it holds that
 \begin{align*}
  \sigma_{\min}(\bfX)\ge \sqrt{n-|O|}-\sqrt{p}-\sqrt{2\log(2/\delta)}.
 \end{align*}
\end{lemma}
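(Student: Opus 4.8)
The plan is to throw away the outlier rows and reduce to the classical lower bound for the smallest singular value of a rectangular Gaussian matrix. Set $m = n - |O|$ and let $\bfG$ be the $m \times p$ matrix obtained from $\bfY$ by keeping only the rows whose index belongs to the inlier set $I$. Condition {\bf (C2)} says that the rows of $\bfE^*$ indexed by $I$ vanish, so these rows of $\bfX = \bfY + \bfE^*$ coincide with the corresponding rows of $\bfY$, that is, with the rows of $\bfG$. Hence, for every unit vector $\bfv \in \R^p$,
\begin{align*}
\|\bfX\bfv\|_2^2 = \sum_{i\in I}(\bfX_{i,\bullet}\bfv)^2 + \sum_{i\in O}(\bfX_{i,\bullet}\bfv)^2 \ge \sum_{i\in I}(\bfY_{i,\bullet}\bfv)^2 = \|\bfG\bfv\|_2^2 ,
\end{align*}
and minimizing over unit vectors yields $\sigma_{\min}(\bfX) \ge \sigma_{\min}(\bfG)$. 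Since $\bmu^* = 0$ and $\bfSigma^* = \bfI_p$, the entries of $\bfG$ are i.i.d.\ standard Gaussian, so it only remains to bound $\sigma_{\min}(\bfG)$ from below.

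For the latter I would invoke the two classical ingredients behind the Davidson--Szarek estimate. First, the map $\bfG \mapsto \sigma_{\min}(\bfG) = \min_{\|\bfv\|_2 = 1}\|\bfG\bfv\|_2$ is $1$-Lipschitz for the Frobenius norm, being a pointwise infimum of the maps $\bfG \mapsto \|\bfG\bfv\|_2$, each of which is $\|\bfv\|_2$-Lipschitz. Second, Gordon's Gaussian min--max comparison inequality gives $\Ex[\sigma_{\min}(\bfG)] \ge \sqrt{m} - \sqrt{p}$. Applying the Gaussian concentration inequality to the $1$-Lipschitz function $\sigma_{\min}$ of the $mp$ i.i.d.\ $\mathcal N(0,1)$ entries of $\bfG$, we get, for every $t > 0$,
\begin{align*}
\prob\Big(\sigma_{\min}(\bfG) \le \sqrt{m} - \sqrt{p} - t\Big) \le \prob\big(\sigma_{\min}(\bfG) \le \Ex[\sigma_{\min}(\bfG)] - t\big) \le e^{-t^2/2} .
\end{align*}
Choosing $t = \sqrt{2\log(2/\delta)}$ makes the right-hand side equal to $\delta/2$, and combined with $\sigma_{\min}(\bfX) \ge \sigma_{\min}(\bfG)$ this gives $\sigma_{\min}(\bfX) \ge \sqrt{n-|O|} - \sqrt{p} - \sqrt{2\log(2/\delta)}$ on an event of probability at least $1 - \delta/2 \ge 1 - \delta$, which is the claim.

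There is no real obstacle in this argument; the only point deserving a word is that the bound is informative only when $m = n - |O| \ge p$, but when $m < p$ the claimed inequality holds trivially, its right-hand side being negative while $\sigma_{\min}$ is nonnegative. One could equivalently quote the non-asymptotic singular-value estimate of Davidson--Szarek (or Vershynin) directly, which already packages the expectation bound and the concentration step into exactly the stated form, but spelling out the two steps keeps the proof self-contained.
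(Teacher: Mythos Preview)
Your proposal is correct and follows essentially the same route as the paper: both reduce $\sigma_{\min}(\bfX)$ to $\sigma_{\min}$ of the inlier submatrix via $\bfX^\top\bfX \succeq \bfX_{I,\bullet}^\top\bfX_{I,\bullet}$ (your variational formulation is equivalent), then invoke the non-asymptotic Gaussian singular-value bound. The only difference is cosmetic---the paper cites \cite[Corollary~5.35]{Vershynin2012} as a black box yielding $\sigma_{\min}(\bfX_{I,\bullet})\ge \sqrt{|I|}-\sqrt{p}-t$ with probability at least $1-2e^{-t^2/2}$, whereas you unpack that result into its two ingredients (Gordon's comparison for the expectation and Gaussian concentration for the $1$-Lipschitz map $\sigma_{\min}$), which incidentally gives you the one-sided bound $1-e^{-t^2/2}$ and hence probability $\ge 1-\delta/2$ rather than $1-\delta$.
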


\begin{proof}
 To begin, we note that the matrix $\bfX^\top \bfX$ can be split into two parts, by summing the terms derived from inliers ($I\subset[n]$) on one hand and those derived from outliers ($O\subset[n]$) on the other hand,
 \begin{align*}
  \bfX^\top\bfX = \sum_{i\in[n]} \bfX_{i,\bullet}^\top\bfX_{i,\bullet} = \sum_{i\in I} \bfX_{i,\bullet}^\top\bfX_{i,\bullet} + \sum_{i\in O} \bfX_{i,\bullet}^\top\bfX_{i,\bullet} = \bfX_{I,\bullet}^\top\bfX_{I,\bullet} + \bfX_{O,\bullet}^\top\bfX_{O,\bullet}.
 \end{align*}
 As the matrix $\bfX_{O,\bullet}^\top\bfX_{O,\bullet}$ is always nonnegative definite and
 $\bfX^\top\bfX=\bfX_{O,\bullet}^\top\bfX_{O,\bullet}+\bfX_{I,\bullet}^\top\bfX_{I,\bullet}$, we infer that
 $\sigma_{\min}(\bfX^\top\bfX) \ge \sigma_{\min}(\bfX_{I,\bullet}^\top\bfX_{I,\bullet})$.
 We can therefore deduce that
 \begin{align*}
  \sigma_{\min}(\bfX) = \sigma_{\min}(\bfX^\top\bfX)^{1/2} \ge \sigma_{\min}(\bfX_{I,\bullet}^\top\bfX_{I,\bullet})^{1/2}
	= \sigma_{\min}(\bfX_{I,\bullet}).
 \end{align*}
 Given that $\bfX_{I,\bullet}$ is a matrix whose rows are independent Gaussian vectors with zero-mean and identity covariance, as shown in \cite[Corollary 5.35]{Vershynin2012}, for every $t\ge0$, it holds that
 \begin{align*}
  \sigma_{\min}(\bfX_{I,\bullet}) \ge \sqrt{|I|}-\sqrt{p}-t.
 \end{align*}
 with probability at least $1-2\e^{-t^2/2}$. Taking $ t = \sqrt{2\log(2/\delta)}$, the claim of the lemma follows.
\end{proof}

\begin{lemma}\label{lem:6}
If $\bfX= \bfY+\bfE^*$ is an $n \times p$ random matrix with $\bfY$ and $\bfE^*$ satisfying assumptions {\bf (C1)} and {\bf (C2)} with $\bmu^*=0$, then
 for any $\delta\in(0,1)$, the inequality
 \begin{equation*}
  {\|\bfI_n-\bfZ\|}_{\infty,\infty} \le \bigg(\frac{(1+M_\bfE)\sqrt{p}+\sqrt{2\log(2n/\delta)}}{\sqrt{n-|O|}-\sqrt{p}-\sqrt{2\log(4/\delta)}}\bigg)^2,
 \end{equation*}
holds with probability at least $1-\delta$. Furthermore, if $n\ge |O|+8p+16\log(4/\delta)$, then with probability at least $1-\delta$,
 \begin{equation}\label{eq:lem6:1}
  {\|\bfI_n-\bfZ\|}_{\infty,\infty} \le \frac{8(1+M_\bfE)^2p+16\log(2n/\delta)}{n-|O|}.
 \end{equation}
 and $\sigma_{\min}(\bfX(\bfOmega^*)^{1/2})\ge \sqrt{(n-|O|)/4}$.
\end{lemma}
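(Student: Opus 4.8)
The plan is to bound ${\|\bfI_n-\bfZ\|}_{\infty,\infty}$ by the largest diagonal entry of the orthogonal projector $\bfI_n-\bfZ=\bfX(\bfX^\top\bfX)^\dag\bfX^\top$, after first reducing to the isotropic case. Since $\bfI_n-\bfZ$ is the projector onto the column space of $\bfX$, it is unchanged when $\bfX$ is right-multiplied by an invertible matrix; replacing $\bfX$ by $\bfX(\bfSigma^*)^{-1/2}$---which still obeys \textbf{(C1)}--\textbf{(C2)} with the same $M_\bfE$ and $O$ but now with $\bfSigma^*=\bfI_p$, and for which $\bfX(\bfOmega^*)^{1/2}=\bfX$---we may and do assume $\bfSigma^*=\bfI_p$. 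As $\bfI_n-\bfZ$ is symmetric and idempotent, $|(\bfI_n-\bfZ)_{ik}|=|((\bfI_n-\bfZ)\bfe_i)^\top(\bfI_n-\bfZ)\bfe_k|\le\big((\bfI_n-\bfZ)_{ii}(\bfI_n-\bfZ)_{kk}\big)^{1/2}$, so ${\|\bfI_n-\bfZ\|}_{\infty,\infty}=\max_{i\in[n]}(\bfI_n-\bfZ)_{ii}$; and, on the event that $\bfX$ has full column rank, $(\bfI_n-\bfZ)_{ii}=\bfX_{i,\bullet}(\bfX^\top\bfX)^\dag\bfX_{i,\bullet}^\top\le\sigma_{\max}\big((\bfX^\top\bfX)^\dag\big)\|\bfX_{i,\bullet}\|_2^2=\|\bfX_{i,\bullet}\|_2^2/\sigma_{\min}(\bfX)^2$. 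It thus remains to lower bound $\sigma_{\min}(\bfX)$ and upper bound $\max_{i\in[n]}\|\bfX_{i,\bullet}\|_2$.

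For the former I would apply Lemma~\ref{lem:7} with confidence level $\delta/2$, obtaining $\sigma_{\min}(\bfX)\ge\sqrt{n-|O|}-\sqrt p-\sqrt{2\log(4/\delta)}$ with probability at least $1-\delta/2$. For the latter, each row $\bfY_{i,\bullet}$ is a standard Gaussian vector in $\R^p$, and $\|\cdot\|_2$ is $1$-Lipschitz with expectation at most $\sqrt p$, so Gaussian concentration and a union bound over $i\in[n]$ give $\max_{i\in[n]}\|\bfY_{i,\bullet}\|_2\le\sqrt p+\sqrt{2\log(2n/\delta)}$ with probability at least $1-\delta/2$; on this event, since for $i\in O$ the triangle inequality and \textbf{(C2)} give $\|\bfX_{i,\bullet}\|_2\le\|\bfY_{i,\bullet}\|_2+M_\bfE\sqrt p$, we get $\max_{i\in[n]}\|\bfX_{i,\bullet}\|_2\le(1+M_\bfE)\sqrt p+\sqrt{2\log(2n/\delta)}$. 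On the intersection of the two events, which has probability at least $1-\delta$, the chain of inequalities from the first paragraph yields the first assertion of the lemma (when $\sqrt{n-|O|}-\sqrt p-\sqrt{2\log(4/\delta)}\le0$ the bound is vacuous, since ${\|\bfI_n-\bfZ\|}_{\infty,\infty}\le1$ always).

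Finally, assume $n\ge|O|+8p+16\log(4/\delta)$. The inequality $(a+b)^2\le2a^2+2b^2$ gives $\big(\sqrt p+\sqrt{2\log(4/\delta)}\big)^2\le2p+4\log(4/\delta)\le\tfrac14(n-|O|)$, hence $\sqrt{n-|O|}-\sqrt p-\sqrt{2\log(4/\delta)}\ge\tfrac12\sqrt{n-|O|}$, which in particular forces $\bfX$ to have full column rank and gives $\sigma_{\min}(\bfX(\bfOmega^*)^{1/2})=\sigma_{\min}(\bfX)\ge\sqrt{(n-|O|)/4}$. Bounding the numerator the same way, $\big((1+M_\bfE)\sqrt p+\sqrt{2\log(2n/\delta)}\big)^2\le2(1+M_\bfE)^2p+4\log(2n/\delta)$, and dividing by $\tfrac14(n-|O|)$ produces \eqref{eq:lem6:1}. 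I expect no serious obstacle here: the only non-routine step is the opening reduction to $\bfSigma^*=\bfI_p$, which is what makes the row-norm and smallest-singular-value estimates concern a genuine standard Gaussian matrix; everything after that is Gaussian concentration together with elementary arithmetic, the sole points requiring care being the full-rank caveat and the bookkeeping of the stated numerical constants.
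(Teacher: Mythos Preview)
Your proof is correct and follows essentially the same route as the paper: reduce to the isotropic case via $\tilde\bfX=\bfX(\bfSigma^*)^{-1/2}$, observe that the $\ell_\infty$ norm of the projector is attained on the diagonal, and then combine the row-norm bound $\max_i\|\tilde\bfX_{i,\bullet}\|_2\le(1+M_\bfE)\sqrt p+\sqrt{2\log(2n/\delta)}$ with Lemma~\ref{lem:7} for $\sigma_{\min}(\tilde\bfX)$. The only cosmetic difference is that you invoke Gaussian Lipschitz concentration for the row norms whereas the paper cites the Laurent--Massart $\chi^2$ tail bound, but both give the identical numerical inequality.
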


\begin{proof}
 We denote by $\{\bfe_i\}_{i\in[n]}\subset\R^n$ the vectors of the canonical basis. All the components of the vector $\bfe_i\in\R^n$ are equal to zero with the exception of the $i$-th entry which is equal to one.
 With this notation, and using the fact that all the off-diagonal entries of a symmetric positive semi-definite matrix are dominated by the largest diagonal entry, we have
 \begin{align*}
  {\|\bfI_n-\bfZ\|}_{\infty,\infty} = \max_{i\in[n]} \bfe_i^\top(\bfI_n-\bfZ)\bfe_{i}.
 \end{align*}
 We also denote $\bfX(\bfSigma^*)^{-1/2}$ by $\tilde\bfX$ and, similarly, $\bfY(\bfSigma^*)^{-1/2}$ by $\tilde\bfY$. It follows that for any $i\in[n]$
 \begin{align*}
  \bfe_i^\top(\bfI_n-\bfZ)\bfe_{i}= \bfe_i^\top \tilde\bfX \big(\tilde\bfX^\top \tilde\bfX \big)^\dag \tilde\bfX^\top\bfe_{i}\le {\|\tilde\bfX_{i,\bullet}\|}_2^2 \sigma_{\max}\big((\tilde\bfX^\top \tilde\bfX)^\dag\big)	.
 \end{align*}
 where the last inequality is a direct consequence of the fact that the spectral norm is the matrix norm induced by the Euclidean norm.
 We may now bound each term of the right side of the previous inequality.
 First, by assumption, it holds that
 \begin{align*}
  {\|\tilde\bfX_{i,\bullet}\|}_2 =  {\|\tilde\bfY_{i,\bullet}+\bfE^*_{i,\bullet}(\bfSigma^*)^{-1/2}\|}_2 \le {\|\tilde\bfY_{i,\bullet}\|}_2+{\|\bfE^*_{i,\bullet}(\bfSigma^*)^{-1/2}\|}_2\le
	{\|\tilde\bfY_{i,\bullet}\|}_2+M_{\bfE}\sqrt{p}.
 \end{align*}
 As $\tilde\bfY_{i,\bullet} \sim \mathcal N_p(0,\bfI_p)$, the random variable
${\|\tilde\bfY_{i,\bullet}\|}_2$ has a $\chi^2$ distribution with $p$ degrees of freedom.
 Applying \citep[Lemma 1]{LaurentMassart} and combining it with the union bound, for any $\delta\in(0,1)$, we get that
 \begin{align*}
 \max_{i\in [n]} {\|\tilde\bfY_{i,\bullet}\|}_2 \le \sqrt{p}+\sqrt{2\log(2n/\delta)},
 \end{align*}
 with a probability at least $1-\delta/2$.
 We complete the proof by bounding $\sigma_{\max}(( \tilde\bfX^\top \tilde\bfX)^\dag)=\sigma_{\min}(\tilde\bfX)^{-2}$. By Lemma \ref{lem:7}, for every $\delta\in(0,1)$, it holds that
 \begin{align*}
\sigma_{\max}(( \tilde\bfX^\top \tilde\bfX)^\dag)\le (\sqrt{|I|}-\sqrt{p}-\sqrt{2\log(4/\delta)})^{-2}
 \end{align*}
 with probability at least $1-\delta/2$. By bringing together what was written above, with a probability at least $1-\delta$, we have
 \begin{align*}
  \max_{i\in[n]}\bfe_i^\top(\bfI_n-\bfZ)\bfe_{i} \le \bigg(\frac{(1+M_\bfE)\sqrt{p}+\sqrt{2\log(2n/\delta)}}
  {\sqrt{|I|}-\sqrt{p}-\sqrt{2\log(4/\delta)}}\bigg)^2.
 \end{align*}
 This yields the first claim of the lemma. To derive the second claim from the first one, it suffices to upper bound the numerator using the inequality
 $(a+b)^2\le 2a^2+2b^2$ and to lower bound the denominator by using that $\sqrt{p}+\sqrt{2\log(4/\delta)}\le \sqrt{2p+4\log(4/\delta)}\le \frac12\sqrt{n-|O|}$.
\end{proof}

\begin{lemma} \label{lem:3}
 For any $\delta \in (0,1)$, the following inequality
 \begin{align}
  {\|\bepsilon^\top\|}_{2,\infty} \le \sqrt{n}+\sqrt{2\log(p/\delta)},
 \end{align}
 holds with probability at least $1 - \delta$.
\end{lemma}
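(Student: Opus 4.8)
The plan is to first translate the claim into a statement about column norms. By the definition of the mixed norm, $\|\bfM\|_{2,\infty}$ is the largest Euclidean norm among the rows of $\bfM$, and the rows of $\bepsilon^\top$ are exactly the columns $\bepsilon_{\bullet,j}$, $j\in[p]$, of $\bepsilon$; hence
\[
\|\bepsilon^\top\|_{2,\infty} = \max_{j\in[p]} \|\bepsilon_{\bullet,j}\|_2 ,
\]
a maximum of $p$ Euclidean norms of standard Gaussian vectors in $\R^n$. Next I would bound a single column: since $\bepsilon_{\bullet,j}\sim\mathcal N_n(0,\bfI_n)$, the random variable $\|\bepsilon_{\bullet,j}\|_2^2$ has a $\chi^2$ distribution with $n$ degrees of freedom, so Lemma~1 of \citet{LaurentMassart} gives, for every $x>0$,
\[
\prob\big(\|\bepsilon_{\bullet,j}\|_2^2 \ge n + 2\sqrt{nx} + 2x\big) \le \e^{-x}.
\]
Using $n+2\sqrt{nx}+2x \le n+2\sqrt{2nx}+2x = \big(\sqrt n + \sqrt{2x}\big)^2$ and taking square roots, this reads $\prob\big(\|\bepsilon_{\bullet,j}\|_2 \ge \sqrt n + \sqrt{2x}\big) \le \e^{-x}$.

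Finally I would take $x = \log(p/\delta)$, so that $\e^{-x} = \delta/p$, and conclude with a union bound over the $p$ columns:
\[
\prob\Big(\max_{j\in[p]}\|\bepsilon_{\bullet,j}\|_2 \ge \sqrt n + \sqrt{2\log(p/\delta)}\Big) \le p\cdot\frac{\delta}{p} = \delta ,
\]
which is precisely the complement of the asserted event.

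There is no genuine obstacle here: the argument is a routine combination of a $\chi^2$ tail bound with a union bound, and the only minor point is the algebraic relaxation $n+2\sqrt{nx}+2x\le(\sqrt n+\sqrt{2x})^2$ used to put the bound in the stated closed form. As an alternative one could invoke the Gaussian concentration inequality for the $1$-Lipschitz map $\bfx\mapsto\|\bfx\|_2$ on $\R^n$, together with $\E\|\bepsilon_{\bullet,j}\|_2\le(\E\|\bepsilon_{\bullet,j}\|_2^2)^{1/2}=\sqrt n$, which yields the same inequality directly.
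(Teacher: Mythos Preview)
Your proof is correct and follows essentially the same route as the paper: identify $\|\bepsilon^\top\|_{2,\infty}=\max_j\|\bepsilon_{\bullet,j}\|_2$, apply the Laurent--Massart $\chi^2$ tail bound, use the relaxation $n+2\sqrt{nx}+2x\le(\sqrt n+\sqrt{2x})^2$, and finish with a union bound over $j\in[p]$.
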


\begin{proof}
 We recall that
 ${\|\bepsilon^\top\|}_{2,\infty} = \max_{j\in[p]} {\|\bepsilon_{\bullet,j}\|}_2$.
 As we have already mentioned just after equation \eqref{main:eq}, the vector $\bepsilon_{\bullet,j}$ is drawn from the
 Gaussian $\mathcal N_n(0,\bfI_n)$ distribution. Therefore, ${\|\bepsilon_{\bullet,j}\|}^2_2$ is a $\chi^2$ random variable
 with $n$ degrees of freedom.  Thus, using \citep[Lemma 1]{LaurentMassart} in combination with the union bound, it holds that
 \begin{align*}
  {\|\bepsilon^\top\|}_{2,\infty}^2 \le n+2\sqrt{n\log(p/\delta)}+2\log(p/\delta)\le (\sqrt{n}+\sqrt{2\log(p/\delta)})^2,
 \end{align*}
 with probability at least $1-\delta$.
\end{proof}

%%%%%%%%%%%%%%%%%%%%%%%%%%%%%%%%%%%%%%%%%%%%%%%%%%%%%%%%%%%%%%%%%%%%%%%%%%%%%%%
\section{Numerical experiments}\label{sec:5}

In this section, we report the results of some numerical experiments performed on synthetic data. The main goal of this part
is to demonstrate the potential of the method based on Eq.~\eqref{step:1bis} and \eqref{step:2bis}. To this end, we have
considered several scenarios and in each of them compared our method with several other competitors. In order to provide a fair
comparison independent of the delicate question of choosing the tuning parameter, the results of all the methods are reported
for the oracle values of the tuning parameters chosen from a grid by minimizing the distance to the true precision matrix.
We have used the coordinate descent algorithm for solving the convex optimization problem of Eq.~\eqref{step:1bis}.

\subsection{Structures of the precision matrix}

Let us first describe the precision matrices used in our experiments. It is worthwhile to underline here that all the precision
matrices are normalized in such a way that all the diagonal entries of the corresponding covariance matrix
$\bfSigma^* = (\bfOmega^*)^{-1}$ are equal to one. To this end, we first define a $p\times p$ positive semidefinite matrix $\bfA$
and then set $\bfOmega^* = (\diag(\bfA^{-1}))^{\frac1{2}} \bfA (\diag(\bfA^{-1}))^{\frac1{2}}$. The matrices $\bfA$ used in the three models
for which the experiments are carried out are defined as follows.

\begin{description}\itemsep=4pt
\item[{\bf Model 1:}] $\bfA$ is a Toeplitz matrix with the entries $\bfA_{ij} = 0.6^{|i-j|}$ for any $i,j \in [p]$.

 \item[{\bf Model 2:}] We start by defining a $p\times p$ pentadiagonal matrix with the entries
 \begin{equation*}
 \bar\bfA_{ij} = \left\lbrace \begin{array}{cl}
                                                       1 &, \text{ for } |i-j| = 0, \\
                                                       -1/3 &, \text{ for } |i-j| = 1, \\
                                                       -1/10 &, \text{ for } |i-j| = 2, \\
                                                       0 &, \text{ otherwise}. \\
                                                      \end{array}
 \right.
 \end{equation*}
Then, we denote by $\bfA$ the matrix with the entries $\bfA_{ij} = (\bar\bfA^{-1})_{ij} \1(|i-j|\le 2)$. One can check that the matrix $\bfA$
defined in such a way is positive semidefinite.

\item[{\bf Model 3:}] We set $\bfA_{ij} = 0$ for all the off-diagonal entries that are neither on the first row nor on the first
column of $\bfA$. The diagonal entries of $\bfA$ are
\begin{equation*}
\bfA_{11}  = p,\qquad \bfA_{ii} = 2,\quad \text{for any}\quad i \in \{2,\ldots,p\},
\end{equation*}
whereas the off-diagonal entries located either on the first row or on the first column are $\bfA_{1i} = \bfA_{i1} = \sqrt{2}$ for $i \in \{2,\ldots,p\}$.

\item[{\bf Model 4:}] The diagonal entries of $\bfA$ are all equal to 1. Besides, we set $\bfA_{ij} = 0.5$ for any $i\ne j$.
\end{description}

\subsection{Contamination scheme and measure of quality}

The positions of outliers were chosen by a simple random sampling without replacement. The proportion of outliers,
$\epsilon = |O|/n$, used in our experiments varies between $5\%$ and $30\%$.
The entries of the rows of $\bfX$ corresponding to outliers were drawn randomly from a standard
Gaussian distribution and independently of one another. The rows of $\bfX$ corresponding to inliers
are drawn from  a zero mean Gaussian distribution with the precision matrix specified by one of the
foregoing models. Note that the magnitude of the individual entries of outliers are similar to those
of the inliers, which makes the outliers  particularly hard to detect.

We measure the distance between the true precision matrix of a multivariate normal distribution
and its estimator using the distance induced by the Frobenius norm. Recall that our method does
not guarantee the positive definiteness of the estimate of the precision matrix.
When the estimate is not positive definite, one can always get a valid precision matrix from $\hat\bfOmega$.
A number of methods have been proposed in the literature for adjusting a matrix such that it is positive definite.
In practice, replacing $\hat\bfOmega$ by the positive definite matrix obtained by the approach of
\citet{Higham2002}, seems to be a good choice as it does not significantly affect the norm-induced
distance between the true precision matrix and its estimate.

\subsection{Precision matrix estimators}

We have compared our method to four other estimators of the precision matrix.
The first and the most naive estimator, referred to as the MLE,  consists in computing the (pseudo-)inverse
of the empirical covariance matrix.

The second estimator is the inverse of a robust covariance estimate
computed by the minimum covariance determinant (MCD) method introduced in \citep{Rousseeuw1984}.
We have used a shrinkage coefficient coming from the improvement of the Ledoit-Wolf shrinkage, developed by
\citet{ChenWieselEldarHero2010} for multivariate Gaussian distributions. We therefore refined the MCD estimator
using the covariance Oracle Shrinkage Approximating (OAS) estimator. In the following, we refer to it as SMCD.
We also did experiments estimating the covariance matrix by the minimum volume ellipsoid (MVE) estimator \citep{Rousseeuw1985} and
by the scaled Kendall's tau estimator \citep{ChenGaoRen2015}. The results obtained for the latter estimators
are not reported as they showed no improvement over the SMCD.

\begin{figure}
 \includegraphics[width = \textwidth]{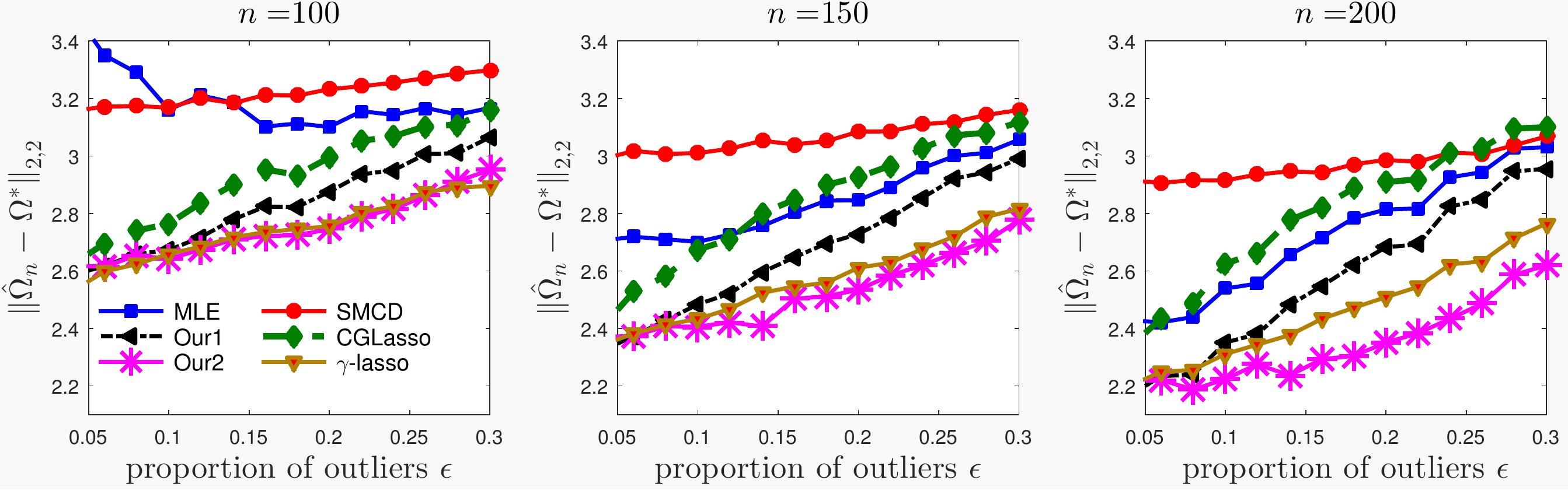}
 \caption{
 The average error (measured in Frobenius norm) of estimating $\bfOmega^*$ in Model 1 for $p=30$, when $\epsilon$ is
 between $5\%$ and $30\%$. Each point is the average of 50 replications.
 }
 \label{fig:p.1}
\end{figure}

The third estimator of the precision matrix is obtained by solving an optimization problem whose cost
function depends on a robust estimate of the covariance matrix. Two versions of this approach are particularly
interesting: the maximum log-likelihood with $\ell_1$-penalization known as graphical lasso
\citep{dAsprBanerjeeEG,BanerjeeGAspr,FriedmanHT} and the constrained $\ell_1$-minimization for inverse matrix
estimation (CLIME) of \citet{CaiLiuLuo}. Robust versions of these estimators have been proposed by \citet{OllererCroux2015}
and \citet{TarrMullerWeber2015} and further investigated by \cite{LohTan2015}. In this approach, robust estimates of the
covariance matrix are plugged-in the graphical lasso or CLIME estimators. In our experiments, the quality of these two
versions were comparable. Therefore, we report only the results for the version based on the graphical lasso.
In \citep{OllererCroux2015}, the authors proposed an enhancement that simplifies the estimator and reduces the
computational cost, by estimating aside the variances and the correlations. Following their work, we chose to
estimate the correlations by the robust Gaussian rank correlation \citep{Boudt2012} and adopted their implementation
choices. In particular, as a robust measure of scale, we used the $Q_n$ estimator of \citet{RousseeuwCroux1993} that
is an alternative to the median absolute deviation (MAD). To sum up, we implemented the correlation based precision
matrix estimator obtained by plugging-in the covariance matrix estimate based on pairwise correlations in the
graphical lasso (hereinafter referred to as CGLASSO).

The fourth estimator used in our experiments is the $\gamma$-LASSO proposed by \citet{HiroseFujisawa2015}.
The crux of the method is the replacement of the penalized negative log-likelihood function by the penalized
negative $\gamma$-likelihood function \citep{Fujisawa:2008:RPE:1434999.1435056,Cichocki2010}. We used the
{\tt R} package  {\tt rsggm} developed by the \citet{HiroseFujisawa2015}.

Finally we considered two version of our approach, referred to as Our1 and Our2. The first version merely
provided by \eqref{step:1bis} and \eqref{step:2bis}, while the second version consists in re-estimating the
precision matrix by the maximum likelihood after removing the observations classified as outliers.

\begin{figure}
 \includegraphics[width = \textwidth]{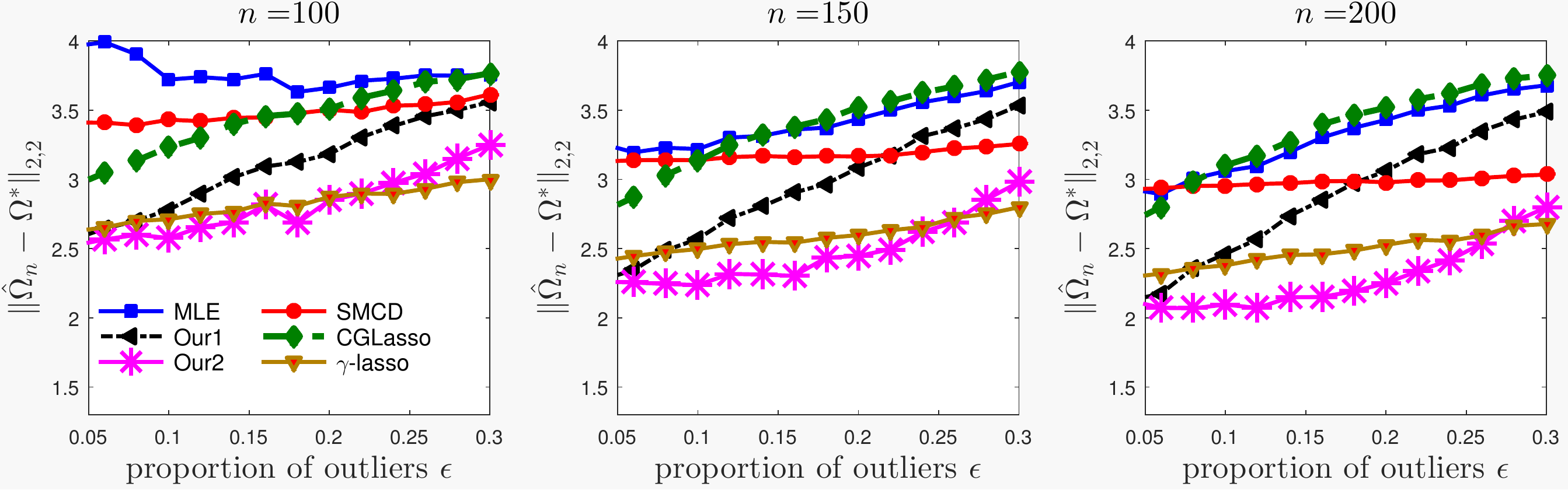}
 \caption{
 The average error (measured in Frobenius norm) of estimating $\bfOmega^*$ in Model 2 for $p=30$, when $\epsilon$ is
 between $5\%$ and $30\%$. Each point is the average of 50 replications.
 }
 \label{fig:p.2}
\end{figure}

\subsection{Results} \label{sec:4:results}

The results of our experiments are depicted in Figures \ref{fig:p.1}-\ref{fig:p.4}. In all the experiments, the
the dimension $p$ is equal to 30 and the contamination rate, denoted by $\epsilon$, is between $5\%$ and $30\%$.
The results show that our procedure is competitive with the state-of-the-art robust estimators of the precision
matrix, even when the proportion of outliers is high. The results for dimensions $p=10,50,100$ were very similar
and therefore are not included in the manuscript.

\begin{figure}
  \includegraphics[width = \textwidth]{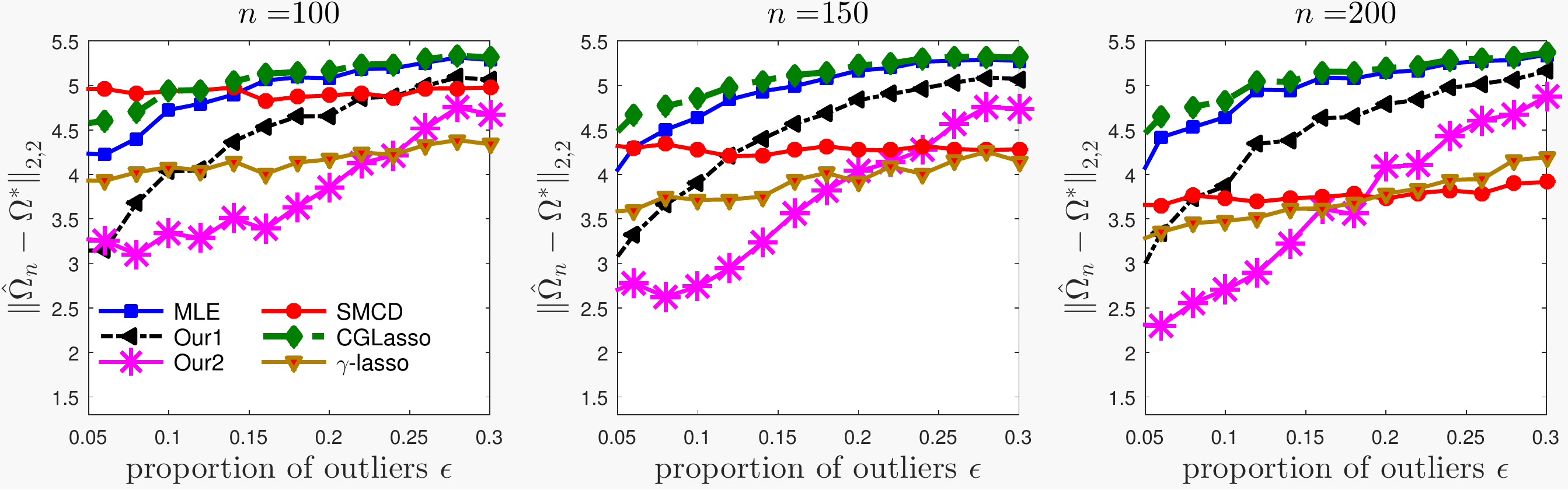}
 \caption{
 The average error (measured in Frobenius norm) of estimating $\bfOmega^*$ in Model 3 for $p=30$, when $\epsilon$ is
 between $5\%$ and $30\%$. Each point is the average of 50 replications.
 }
 \label{fig:p.3}
\end{figure}

One may observe that the step of re-estimation of the precision matrix after the removal of the observations classified
as outliers reduces the error of estimation in all the considered situations. We would also like to mention that the
$\gamma$-lasso, which has a highly competitive statistical accuracy is defined as the minimizer of a nonconvex cost
function. Furthermore, there is no theoretical guarantee ensuring the convergence of the algorithm or controlling
its statistical error.

\begin{figure}
  \includegraphics[width = \textwidth]{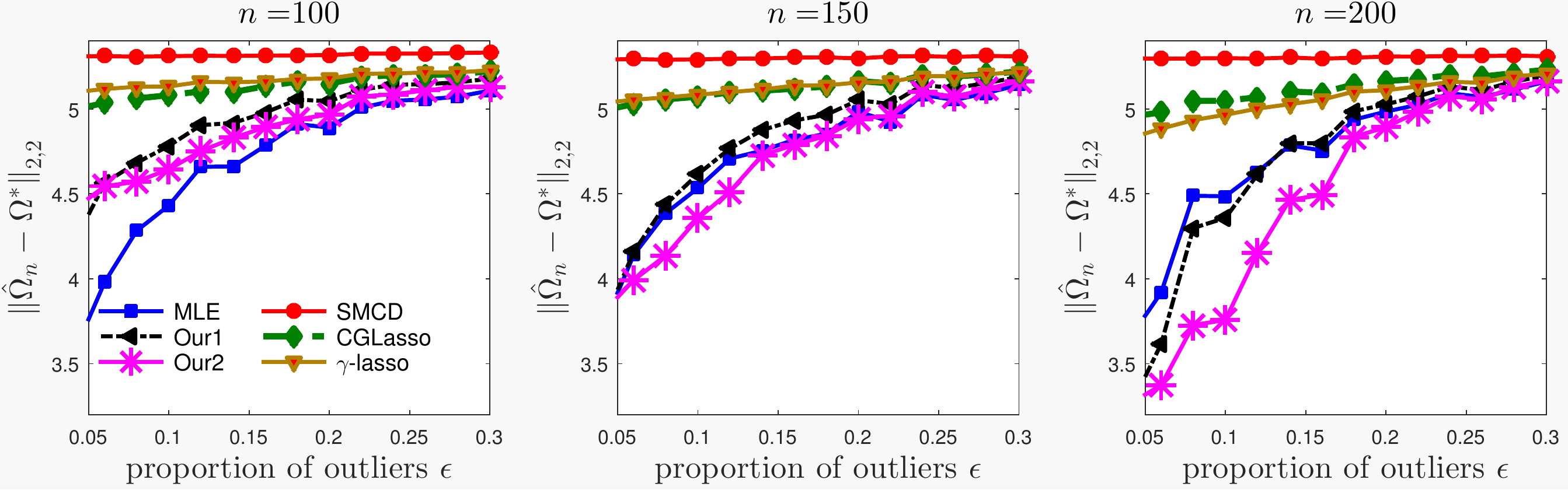}
 \caption{
 The average error (measured in Frobenius norm) of estimating $\bfOmega^*$ in Model 4 for $p=30$, when $\epsilon$ is
 between $5\%$ and $30\%$. Each point is the average of 50 replications.
 }
 \label{fig:p.4}
\end{figure}

\section*{Acknowledgments} The work of the second author was partially supported by the grant Investissements d'Avenir (ANR-
11-IDEX-0003/Labex Ecodec/ANR-11-LABX-0047) and the chair LCL-GENES.

\bibliography{tmpBibliography1}

\begin{thebibliography}{36}
\providecommand{\natexlab}[1]{#1}
\providecommand{\url}[1]{\texttt{#1}}
\expandafter\ifx\csname urlstyle\endcsname\relax
  \providecommand{\doi}[1]{doi: #1}\else
  \providecommand{\doi}{doi: \begingroup \urlstyle{rm}\Url}\fi

\bibitem[Banerjee et~al.(2008)Banerjee, {El Ghaoui}, and
  {d'Aspremont}]{BanerjeeGAspr}
O.~Banerjee, L.~{El Ghaoui}, and A.~{d'Aspremont}.
\newblock {Model selection through sparse maximum likelihood estimation for
  multivariate Gaussian or binary data}.
\newblock \emph{J. Mach. Learn. Res.}, 9:\penalty0 485--516, June 2008.

\bibitem[Belloni et~al.(2011)Belloni, Chernozhukov, and Wang]{BCW}
A.~Belloni, V.~Chernozhukov, and L.~Wang.
\newblock Square-root {L}asso: pivotal recovery of sparse signals via conic
  programming.
\newblock \emph{Biometrika}, 98\penalty0 (4):\penalty0 791--806, December 2011.

\bibitem[Bickel et~al.(2009)Bickel, Ritov, and Tsybakov]{BRT}
P.~J. Bickel, Y.~Ritov, and A.~B. Tsybakov.
\newblock Simultaneous analysis of {L}asso and {D}antzig selector.
\newblock \emph{Ann. Statist.}, 37\penalty0 (4):\penalty0 1705--1732, August
  2009.

\bibitem[Boudt et~al.(2012)Boudt, Cornelissen, and Croux]{Boudt2012}
Kris Boudt, Jonathan Cornelissen, and Christophe Croux.
\newblock The gaussian rank correlation estimator: robustness properties.
\newblock \emph{Statistics and Computing}, 22\penalty0 (2):\penalty0 471--483,
  2012.

\bibitem[Cai et~al.(2011)Cai, Liu, and Luo]{CaiLiuLuo}
T.~Cai, W.~Liu, and X.~Luo.
\newblock {A Constrained L1 Minimization Approach to Sparse Precision Matrix
  Estimation}.
\newblock \emph{Journal of the American Statistical Association}, 106:\penalty0
  594--607, February 2011.

\bibitem[Cand{\`e}s and Randall(2008)]{Candes08}
Emmanuel~J. Cand{\`e}s and Paige~A. Randall.
\newblock Highly robust error correction by convex programming.
\newblock \emph{IEEE Trans. Inform. Theory}, 54\penalty0 (7):\penalty0
  2829--2840, 2008.
\newblock ISSN 0018-9448.
\newblock \doi{10.1109/TIT.2008.924688}.
\newblock URL \url{http://dx.doi.org/10.1109/TIT.2008.924688}.

\bibitem[{Chen} et~al.(2015){Chen}, {Gao}, and {Ren}]{ChenGaoRen2015}
M.~{Chen}, C.~{Gao}, and Z.~{Ren}.
\newblock {Robust Covariance Matrix Estimation via Matrix Depth}.
\newblock \emph{ArXiv e-prints}, June 2015.

\bibitem[Chen et~al.(2015)Chen, Gao, and Ren]{Gao2015}
M.~Chen, C.~Gao, and Z.~Ren.
\newblock {Robust Covariance Matrix Estimation via Matrix Depth}.
\newblock \emph{arXiv:1506.00691}, 2015.

\bibitem[Chen et~al.(2010)Chen, Wiesel, Eldar, and
  Hero]{ChenWieselEldarHero2010}
Yilun Chen, A.~Wiesel, Y.C. Eldar, and A.O. Hero.
\newblock Shrinkage algorithms for mmse covariance estimation.
\newblock \emph{Signal Processing, IEEE Transactions on}, 58\penalty0
  (10):\penalty0 5016--5029, Oct 2010.

\bibitem[Cichocki and Amari(2010)]{Cichocki2010}
A.~Cichocki and S.~Amari.
\newblock Families of alpha- beta- and gamma- divergences: Flexible and robust
  measures of similarities.
\newblock \emph{Entropy}, 12\penalty0 (6):\penalty0 1532, 2010.
\newblock URL \url{http://www.mdpi.com/1099-4300/12/6/1532}.

\bibitem[Dalalyan and Keriven(2012)]{DalalyanK}
Arnak Dalalyan and Renaud Keriven.
\newblock Robust estimation for an inverse problem arising in multiview
  geometry.
\newblock \emph{J. Math. Imaging Vision}, 43\penalty0 (1):\penalty0 10--23,
  2012.

\bibitem[Dalalyan and Chen(2012)]{DalalyanC12}
Arnak~S. Dalalyan and Yin Chen.
\newblock Fused sparsity and robust estimation for linear models with unknown
  variance.
\newblock In \emph{Advances in Neural Information Processing Systems 25}, pages
  1268--1276, 2012.

\bibitem[{d'Aspremont} et~al.(2008){d'Aspremont}, Banerjee, and {El
  Ghaoui}]{dAsprBanerjeeEG}
A.~{d'Aspremont}, O.~Banerjee, and L.~{El Ghaoui}.
\newblock First-order methods for sparse covariance selection.
\newblock \emph{SIAM. J. Matrix Anal. {\&} Appl.}, 30\penalty0 (1):\penalty0
  56--66, February 2008.

\bibitem[Friedman et~al.(2008)Friedman, Hastie, and Tibshirani]{FriedmanHT}
J.~Friedman, T.~Hastie, and R.~Tibshirani.
\newblock {Sparse inverse covariance estimation with the graphical lasso}.
\newblock \emph{Biostatistics}, 9\penalty0 (3):\penalty0 432--441, July 2008.

\bibitem[Fujisawa and Eguchi(2008)]{Fujisawa:2008:RPE:1434999.1435056}
H.~Fujisawa and S.~Eguchi.
\newblock Robust parameter estimation with a small bias against heavy
  contamination.
\newblock \emph{J. Multivar. Anal.}, 99\penalty0 (9):\penalty0 2053--2081,
  October 2008.
\newblock ISSN 0047-259X.
\newblock URL \url{http://dx.doi.org/10.1016/j.jmva.2008.02.004}.

\bibitem[Hampel et~al.(1986)Hampel, Ronchetti, Rousseeuw, and Stahel]{Hampel}
Frank~R. Hampel, Elvezio~M. Ronchetti, Peter~J. Rousseeuw, and Werner~A.
  Stahel.
\newblock \emph{Robust statistics}.
\newblock Wiley Series in Probability and Mathematical Statistics: Probability
  and Mathematical Statistics. John Wiley \& Sons, Inc., New York, 1986.
\newblock The approach based on influence functions.

\bibitem[Higham(2002)]{Higham2002}
N.~J. Higham.
\newblock Computing the nearest correlation matrix—a problem from finance.
\newblock \emph{IMA journal of Numerical Analysis}, 22\penalty0 (3):\penalty0
  329--343, 2002.

\bibitem[{Hirose} and {Fujisawa}(2015)]{HiroseFujisawa2015}
K.~{Hirose} and H.~{Fujisawa}.
\newblock {Robust sparse Gaussian graphical modeling}.
\newblock \emph{ArXiv e-prints}, August 2015.
\newblock URL \url{http://arxiv.org/abs/1508.05571}.

\bibitem[Huber and Ronchetti(2009)]{Huber09}
Peter~J. Huber and Elvezio~M. Ronchetti.
\newblock \emph{Robust statistics}.
\newblock Wiley Series in Probability and Statistics. John Wiley \& Sons, Inc.,
  Hoboken, NJ, second edition, 2009.

\bibitem[{Klopp} and {Tsybakov}(2015)]{KloppTsyb15}
O.~{Klopp} and A.~B. {Tsybakov}.
\newblock {Estimation of matrices with row sparsity}.
\newblock \emph{ArXiv e-prints}, September 2015.

\bibitem[Klopp et~al.(2014)Klopp, Lounici, and Tsybakov]{Klopp15}
O.~Klopp, K.~Lounici, and A.~B. Tsybakov.
\newblock {Robust Matrix Completion}.
\newblock \emph{ArXiv e-prints}, December 2014.

\bibitem[Laurent and Massart(2000)]{LaurentMassart}
B.~Laurent and P.~Massart.
\newblock Adaptive estimation of a quadratic functional by model selection.
\newblock \emph{Ann. Statist.}, 28\penalty0 (5):\penalty0 1302--1338, October
  2000.

\bibitem[{Loh} and {Tan}(2015)]{LohTan2015}
P.-L. {Loh} and X.~L. {Tan}.
\newblock {High-dimensional robust precision matrix estimation: Cellwise
  corruption under $\epsilon$-contamination}.
\newblock \emph{ArXiv e-prints}, September 2015.

\bibitem[Lounici et~al.(2011)Lounici, Pontil, van~de Geer, and
  Tsybakov]{lounici2011}
Karim Lounici, Massimiliano Pontil, Sara van~de Geer, and Alexandre~B.
  Tsybakov.
\newblock Oracle inequalities and optimal inference under group sparsity.
\newblock \emph{Ann. Statist.}, 39\penalty0 (4):\penalty0 2164--2204, 08 2011.

\bibitem[Maronna et~al.(2006)Maronna, Martin, and
  Yohai]{MaronnaMartinYohai2006}
Ricardo~A. Maronna, Douglas~R. Martin, and Victor~J. Yohai.
\newblock \emph{{Robust Statistics: Theory and Methods}}.
\newblock John Wiley and Sons, New York, 2006.

\bibitem[Nguyen and Tran(2013)]{Nguyen}
Nam~H. Nguyen and Trac~D. Tran.
\newblock Robust {L}asso with missing and grossly corrupted observations.
\newblock \emph{IEEE Trans. Inform. Theory}, 59\penalty0 (4):\penalty0
  2036--2058, 2013.

\bibitem[{{\"O}llerer} and {Croux}(2015)]{OllererCroux2015}
V.~{{\"O}llerer} and C.~{Croux}.
\newblock {Robust high-dimensional precision matrix estimation}.
\newblock \emph{ArXiv e-prints}, January 2015.

\bibitem[Raskutti et~al.(2010)Raskutti, Wainwright, and Yu]{RWY}
G.~Raskutti, M.~J. Wainwright, and B.~Yu.
\newblock {Restricted eigenvalue properties for correlated Gaussian designs}.
\newblock \emph{J. Mach. Learn. Res.}, 11:\penalty0 2241--2259, August 2010.

\bibitem[Rousseeuw(1984)]{Rousseeuw1984}
Peter~J Rousseeuw.
\newblock Least median of squares regression.
\newblock \emph{Journal of the American statistical association}, 79\penalty0
  (388):\penalty0 871--880, 1984.

\bibitem[Rousseeuw(1985)]{Rousseeuw1985}
Peter~J Rousseeuw.
\newblock Multivariate estimation with high breakdown point.
\newblock \emph{Mathematical Statistics and Applications}, 8:\penalty0
  283--297, 1985.

\bibitem[Rousseeuw and Croux(1993)]{RousseeuwCroux1993}
Peter~J Rousseeuw and Christophe Croux.
\newblock Alternatives to the median absolute deviation.
\newblock \emph{Journal of the American Statistical association}, 88\penalty0
  (424):\penalty0 1273--1283, 1993.

\bibitem[Sun and Zhang(2012)]{SunZhang12}
T.~Sun and C-H. Zhang.
\newblock Scaled sparse linear regression.
\newblock \emph{Biometrika}, 99\penalty0 (4):\penalty0 879--898, September
  2012.

\bibitem[Sun and Zhang(2013)]{SunZhang13}
T.~Sun and C-H. Zhang.
\newblock Sparse matrix inversion with scaled {L}asso.
\newblock \emph{J. Mach. Learn. Res.}, 14:\penalty0 3385--3418, November 2013.

\bibitem[{Tarr} et~al.(2015){Tarr}, {M{\"u}ller}, and
  {Weber}]{TarrMullerWeber2015}
G.~{Tarr}, S.~{M{\"u}ller}, and N.~C. {Weber}.
\newblock {Robust estimation of precision matrices under cellwise
  contamination}.
\newblock \emph{ArXiv e-prints}, January 2015.

\bibitem[Vershynin(2012)]{Vershynin2012}
R.~Vershynin.
\newblock Introduction to the non-asymptotic analysis of random matrices.
\newblock In \emph{Compressed sensing}, pages 210--268. Cambridge Univ. Press,
  Cambridge, 2012.

\bibitem[Wang and Lin(2014)]{WangLin2014}
J.~Wang and S.~Lin.
\newblock Robust inverse covariance estimation under noisy measurements.
\newblock In Tony Jebara and Eric~P. Xing, editors, \emph{Proceedings of the
  31st International Conference on Machine Learning (ICML-14)}, pages 928--936.
  JMLR Workshop and Conference Proceedings, 2014.
\newblock URL \url{http://jmlr.org/proceedings/papers/v32/wangf14.pdf}.

\end{thebibliography}

\newpage

\section*{Supplementary Material}

\subsection*{Proofs in high dimension}

In this section, we provide the proof of the risk bound in the high dimensional case, when the estimator is
obtained by solving the optimization problem in \eqref{step:1bis}.
We define $\calO = O \times[p]$ and, by a slight abuse of notation, $\calO^c = O^c \times[p]$.
We denote by $\bxi_{\calO}$, resp. $\bxi_{\calO^c}$, the matrix obtained by zeroing all the rows $\bxi_{i,\bullet}$ such that $i \in O$, resp. $i \in O^c$.
We set $\bar\bfTheta^* = \bfTheta^* + \bxi_{\calO}$
and $\bar\bxi = \bxi_{\calO^c}$.
We further define $\hat\bfDelta^\bfB = \hat\bfB - \bfB^*$,
$\hat\bfDelta^\bfTheta = \hat\bfTheta-\bar\bfTheta^*$, $\hat\bfDelta = \begin{bmatrix}\hat\bfDelta^\bfB \\
\hat\bfDelta^\bfTheta\end{bmatrix}\in \R^{(p+n) \times p}$ and $\hat\bxi = \bfX^{(n)}\hat\bfB - \hat\bfTheta$.
Since $\bfM = [\bfX^{(n)};-\bfI_n]$, the estimator $(\hat\bfB,\hat\bfTheta)$  is defined as the minimizer of the cost
function
\begin{align*}
 F(\bfB, \bfTheta) = {\bigg\|\bigg(\bfM \begin{bmatrix}\bfB \\ \bfTheta\end{bmatrix}\bigg)^\top\bigg\|}_{2,1}
+ \lambda \big({\|\bfTheta\|}_{2,1} + \gamma {\|\bfB\|}_{1,1}\big).
\end{align*}
Recall that $\calJ$ and $O$ are such that $\bfB^*_{\calJ^c} = 0$ and $\bfTheta^*_{O^c,\bullet} = 0$.
This sets are interpreted  as the supports of $\bfB^*$ and $\bfTheta^*$.
The set $\calJ$ corresponds to the sparsity pattern and $O$ to the outliers.
Throughout this section, we adopt the convention that $0/0=0$.

\begin{prop}\label{prop:1}
 If, for some constant $c>1$, the penalty levels $\lambda$ and $\gamma$ satisfy the  conditions
\begin{align}
 \lambda\gamma \ge \frac{c+1}{c-1} \max_{j\in [p]} \frac{\|\bfX^{(n)}_{I,j^c}{}^\top \bepsilon_{I,j}\|_\infty}
{{\|\bepsilon_{I,j}\|}_2}\quad\text{and}\quad
\lambda \ge \frac{c+1}{c-1} \max_{i\in[n]} \bigg(\sum_{j\in [p]}\frac{\epsilon_{ij}^2}{{\|\bepsilon_{I,j}\|}_2^2}\bigg)^{1/2},\label{eq:d}
\end{align}
then the matrix $\hat\bfDelta$ belongs to the cone $\mathscr C_{\calJ,O}(c,\gamma)$.
\end{prop}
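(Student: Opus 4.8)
The plan is to follow the route of the proof of Lemma~\ref{step1}, now keeping track of the two penalty blocks and of the shifted target $\bar\bfTheta^{*}=\bfTheta^{*}+\bxi_{\calO}$, $\bar\bxi=\bxi_{\calO^{c}}$ introduced above. First I would start from the basic inequality $F(\hat\bfB,\hat\bfTheta)\le F(\bfB^{*},\bar\bfTheta^{*})$, valid since $\bfB^{*}$ has unit diagonal and hence $(\bfB^{*},\bar\bfTheta^{*})$ is feasible. Since $\bfX^{(n)}\bfB^{*}-\bar\bfTheta^{*}=\bar\bxi$ by \eqref{eq:xi} and $\hat\bxi=\bfX^{(n)}\hat\bfB-\hat\bfTheta$, cancelling the common contributions reduces this to
\begin{align*}
\lambda\big(\|\hat\bfTheta\|_{2,1}-\|\bar\bfTheta^{*}\|_{2,1}\big)+\lambda\gamma\big(\|\hat\bfB\|_{1,1}-\|\bfB^{*}\|_{1,1}\big)\le\sum_{j\in[p]}\big(\|\bar\bxi_{\bullet,j}\|_{2}-\|\hat\bxi_{\bullet,j}\|_{2}\big).
\end{align*}

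For the left-hand side I would use the support hypotheses $\bfB^{*}_{\calJ^{c}}=0$ and $\bfTheta^{*}_{O^{c},\bullet}=0$, the latter implying that $\bar\bfTheta^{*}$ is supported on the outlier rows $O$. Decomposing $\|\hat\bfTheta\|_{2,1}$ into its $O$-row and $O^{c}$-row parts and $\|\hat\bfB\|_{1,1}$ into its $\calJ$ and $\calJ^{c}$ parts, and applying the triangle inequality on the support pieces exactly as for square-root lasso, yields
\begin{align*}
\|\hat\bfTheta\|_{2,1}-\|\bar\bfTheta^{*}\|_{2,1}\ge\|\hat\bfDelta^{\bfTheta}_{O^{c},\bullet}\|_{2,1}-\|\hat\bfDelta^{\bfTheta}_{O,\bullet}\|_{2,1},\qquad
\|\hat\bfB\|_{1,1}-\|\bfB^{*}\|_{1,1}\ge\|\hat\bfDelta^{\bfB}_{\calJ^{c}}\|_{1,1}-\|\hat\bfDelta^{\bfB}_{\calJ}\|_{1,1}.
\end{align*}

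For the right-hand side I would apply the elementary bound $\|a\|_{2}-\|b\|_{2}\le(a-b)^{\top}a/\|a\|_{2}$ (with the convention $0/0=0$) to $a=\bar\bxi_{\bullet,j}$, $b=\hat\bxi_{\bullet,j}$, noting that $a-b=-(\bfX^{(n)}\hat\bfDelta^{\bfB}_{\bullet,j}-\hat\bfDelta^{\bfTheta}_{\bullet,j})$ is the $j$-th column of $-\bfM\hat\bfDelta$. The structural observation that makes this workable is that $\bar\bxi_{\bullet,j}$ vanishes off the inlier rows $I=O^{c}$ and equals $(\phi^{*}_{j}/\sqrt{n})\bepsilon_{I,j}$ there, so the scalar $\phi^{*}_{j}/\sqrt{n}$ cancels in every ratio. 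Using $\hat\bfDelta^{\bfB}_{jj}=0$, the ``$\bfB$-part'' $\sum_{j}(\bfX^{(n)}\hat\bfDelta^{\bfB}_{\bullet,j})^{\top}\bar\bxi_{\bullet,j}/\|\bar\bxi_{\bullet,j}\|_{2}$ equals $\sum_{j}(\hat\bfDelta^{\bfB}_{j^{c},j})^{\top}\bfX^{(n)}_{I,j^{c}}{}^{\top}\bepsilon_{I,j}/\|\bepsilon_{I,j}\|_{2}$, which Hölder's inequality bounds by $\|\hat\bfDelta^{\bfB}\|_{1,1}\max_{j}\|\bfX^{(n)}_{I,j^{c}}{}^{\top}\bepsilon_{I,j}\|_{\infty}/\|\bepsilon_{I,j}\|_{2}$, while the ``$\bfTheta$-part'' $\sum_{j}(\hat\bfDelta^{\bfTheta}_{\bullet,j})^{\top}\bar\bxi_{\bullet,j}/\|\bar\bxi_{\bullet,j}\|_{2}=\sum_{j}\sum_{i\in I}\hat\bfDelta^{\bfTheta}_{ij}\,\epsilon_{ij}/\|\bepsilon_{I,j}\|_{2}$, after swapping the two summations and using Cauchy--Schwarz over $j$ for each fixed $i$, is bounded by $\|\hat\bfDelta^{\bfTheta}_{O^{c},\bullet}\|_{2,1}\max_{i\in[n]}\big(\sum_{j}\epsilon_{ij}^{2}/\|\bepsilon_{I,j}\|_{2}^{2}\big)^{1/2}$. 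By the two conditions in \eqref{eq:d}, the right-hand side is therefore at most $\tfrac{c-1}{c+1}\big(\lambda\gamma\|\hat\bfDelta^{\bfB}\|_{1,1}+\lambda\|\hat\bfDelta^{\bfTheta}_{O^{c},\bullet}\|_{2,1}\big)$.

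Combining the two estimates and writing $S=\lambda\|\hat\bfDelta^{\bfTheta}_{O,\bullet}\|_{2,1}+\lambda\gamma\|\hat\bfDelta^{\bfB}_{\calJ}\|_{1,1}$ and $S^{c}=\lambda\|\hat\bfDelta^{\bfTheta}_{O^{c},\bullet}\|_{2,1}+\lambda\gamma\|\hat\bfDelta^{\bfB}_{\calJ^{c}}\|_{1,1}$, the chain collapses to $S^{c}-S\le\tfrac{c-1}{c+1}(S+S^{c})$, i.e.\ $S^{c}\le cS$, which is exactly $\hat\bfDelta\in\mathscr C_{\calJ,O}(c,\gamma)$ after dividing through by $\lambda$. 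The one genuine difficulty, here as in the proof of Theorem~\ref{thm:1}, is the non-decomposability of the data-fidelity term in the rows of $\bfTheta$; the reason the argument still goes through is that $\bar\bxi$ is row-sparse (supported on $I$), so the cross term between $\bar\bxi$ and $\hat\bfDelta^{\bfTheta}$ only ``sees'' the inlier rows and naturally produces $\|\hat\bfDelta^{\bfTheta}_{O^{c},\bullet}\|_{2,1}$ rather than the full mixed norm, and the cross term between $\bar\bxi$ and $\bfX^{(n)}\hat\bfDelta^{\bfB}$ collapses onto the inlier submatrix $\bfX^{(n)}_{I,j^{c}}$ --- precisely the stochastic quantities controlled by \eqref{eq:d}. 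A minor point to watch is the degenerate case $\bar\bxi_{\bullet,j}=0$, which contributes a nonpositive term and is absorbed by the convention $0/0=0$.
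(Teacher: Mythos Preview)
Your proof is correct and follows essentially the same route as the paper's: start from $F(\hat\bfB,\hat\bfTheta)\le F(\bfB^{*},\bar\bfTheta^{*})$, linearize each term $\|\bar\bxi_{\bullet,j}\|_2-\|\hat\bxi_{\bullet,j}\|_2$ via $\|a\|_2-\|b\|_2\le (a-b)^\top a/\|a\|_2$, split the resulting cross term into its $\bfB$- and $\bfTheta$-parts, and control each by the corresponding condition in \eqref{eq:d}. The only (harmless) variation is that you keep the sharper bound $\|\hat\bfDelta^{\bfTheta}_{O^c,\bullet}\|_{2,1}$ for the $\bfTheta$-part, whereas the paper relaxes this to $\|\hat\bfDelta^{\bfTheta}\|_{2,1}$ before concluding; both lead to the same cone inequality.
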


\begin{proof}
Let us define $\hat\bxi$ as the $n\times p$ matrix of estimated residuals: $\hat\bxi=\bfX^{(n)}\hat\bfB - \hat\bfTheta$.
 By definition of $\hat\bfB$ and $\hat\bfTheta$, we obtain the inequality
\begin{align*}
 {\|(\bfX^{(n)}\hat\bfB - \hat\bfTheta)^\top\|}_{2,1} + \lambda\big(\gamma {\|\hat\bfB\|}_{1,1} + {\|\hat\bfTheta\|}_{2,1}\big) &\le {\|(\bfX^{(n)}\bfB^* - \bar\bfTheta^*)^\top\|}_{2,1} + \lambda\big(\gamma {\|\bfB^*\|}_{1,1} + {\|\bar\bfTheta^*\|}_{2,1} \big) ,
\end{align*}
that can be equivalently written as
\begin{align*}
{\|\hat\bxi{}^\top\|}_{2,1} + \lambda\gamma {\|\hat\bfB\|}_{1,1} + \lambda {\|\hat\bfTheta\|}_{2,1}
\le {\|\bar\bxi^\top\|}_{2,1} + \lambda\gamma {\|\bfB^*\|}_{1,1} + \lambda {\|\bar\bfTheta^*\|}_{2,1},
\end{align*}
or as
\begin{align}
\lambda\gamma ({\|\hat\bfB\|}_{1,1}-{\|\bfB^*\|}_{1,1}) + \lambda({\|\hat\bfTheta\|}_{2,1} - {\|\bar\bfTheta^*\|}_{2,1})
\le \sum_{j\in[p]}({\|\bar\bxi_{\bullet,j}\|}_{2}  -{\|\hat\bxi_{\bullet,j}\|}_{2}) .
\label{eq:a}
\end{align}
In view of the inequality $\|a\|_2-\|b\|_2\le (a-b)^\top a/\|a\|_2$, which holds for every pair of
vectors $(a,b)$ and is a simple consequence of the Cauchy-Schwarz inequality, we have
\begin{align*}
 {\|\bar\bxi_{\bullet,j}\|}_2 -{\|\hat\bxi_{\bullet,j}\|}_2
     &\le (\bxi_{I,j}-\hat\bxi_{I,j})^\top\frac{\bxi_{I,j}}{\|\bxi_{I,j}\|_2}\\
     &= (\bxi_{I,j}-\hat\bxi_{I,j})^\top\frac{\bepsilon_{I,j}}{\|\bepsilon_{I,j}\|_2}\\
     &= (-\bfX_{I,\bullet}^{(n)}\hat\bfDelta^\bfB_{\bullet,j}+\hat\bfDelta^{\bfTheta}_{I,j})^\top\frac{\bepsilon_{I,j}}{\|\bepsilon_{I,j}\|_2}.
\end{align*}
Summing these inequalities over all $j\in [p]$ and applying the duality inequalities we infer that
\begin{align*}
\sum_{j\in[p]}({\|\bxi_{\bullet,j}\|}_{2}  -{\|\hat\bxi_{\bullet,j}\|}_{2})
     &\le-\sum_{j\in [p]}(\bfX_{I,\bullet}^{(n)}\hat\bfDelta^{\bfB}_{\bullet,j})^\top\frac{\bepsilon_{I,j}}{\|\bepsilon_{I,j}\|_2}
		   +\sum_{i\in I}\sum_{j\in[p]}\hat\bfDelta^{\bfTheta}_{i,j}\frac{\bepsilon_{i,j}}{\|\bepsilon_{I,j}\|_2}\\
		 &\le \sum_{j\in[p]} \|\hat\bfDelta^{\bfB}_{\bullet,j}\|_1
		   \frac{\|\bfX_{I,j^c}^{(n)}{}^\top\bepsilon_{I,j}\|_\infty}{\|\bepsilon_{I,j}\|_2}
			 + \sum_{i\in[n]}\|\hat\bfDelta^{\bfTheta}_{i,\bullet}\|_2 \bigg(\sum_{j\in[p]}
			 \frac{\epsilon_{ij}^2}{\|\bepsilon_{I,j}\|_2^2}\bigg)^{1/2}.
\end{align*}
When condition (\ref{eq:d}) is satisfied, the last inequality yields
\begin{align*}
\sum_{j\in[p]}({\|\bar\bxi_{\bullet,j}\|}_{2}  -{\|\hat\bxi_{\bullet,j}\|}_{2})
		 &\le \bigg(\frac{c-1}{c+1}\bigg)\bigg(\lambda\gamma\sum_{j\in[p]} \|\hat\bfDelta^{\bfB}_{\bullet,j}\|_1
			 + \lambda\sum_{i\in[n]}\|\hat\bfDelta^{\bfTheta}_{i,\bullet}\|_2 \bigg)\\
		 &=  \lambda\bigg(\frac{c-1}{c+1}\bigg)\big(\gamma\|\hat\bfDelta^{\bfB}\|_{1,1}
			 + \|\hat\bfDelta^{\bfTheta}\|_{2,1} \big).
\end{align*}
This inequality, in conjunction with Eq.\ (\ref{eq:a}), implies that
\begin{align}
\gamma ({\|\hat\bfB\|}_{1,1}-{\|\bfB^*\|}_{1,1}) + ({\|\hat\bfTheta\|}_{2,1} - {\|\bar\bfTheta^*\|}_{2,1})
\le \bigg(\frac{c-1}{c+1}\bigg)\big(\gamma\|\hat\bfDelta^{\bfB}\|_{1,1}
			 + \|\hat\bfDelta^{\bfTheta}\|_{2,1} \big).
\label{eq:b}
\end{align}
On the other hand, using the triangle inequality and the fact that $\bfB_{\calJ^c}^*=\bfTheta_{O^c,\bullet}^*=0$, we get
\begin{align*}
{\|\hat\bfB\|}_{1,1}-{\|\bfB^*\|}_{1,1}
   &= {\|\hat\bfB_{\calJ^c}\|}_{1,1}+{\|\hat\bfB_{\calJ}\|}_{1,1}-{\|\bfB^*_{\calJ}\|}_{1,1}\\
	 &\ge  {\|\hat\bfDelta^{\bfB}_{\calJ^c}\|}_{1,1}-{\|\hat\bfDelta^\bfB_{\calJ}\|}_{1,1},\\
{\|\hat\bfTheta\|}_{2,1} - {\|\bar\bfTheta^*\|}_{2,1}
   &= {\|\hat\bfTheta_{O^c,\bullet}\|}_{2,1}+{\|\hat\bfTheta_{O,\bullet}\|}_{2,1}-{\|\bar\bfTheta^*_{O,\bullet}\|}_{2,1}\\
   &\ge {\|\hat\bfDelta^\bfTheta_{O^c,\bullet}\|}_{2,1}-{\|\hat\bfDelta^\bfTheta_{O,\bullet}\|}_{2,1}.
\end{align*}
The combination of these bounds with Eq.\ \eqref{eq:b} leads to
\begin{align*}
 {\|\hat\bfDelta^\bfB_{\calJ^c}\|}_{1,1} + \gamma^{-1} {\|\hat\bfDelta^\bfTheta_{O^c,\bullet}\|}_{2,1} &\le c \big({\|\hat\bfDelta^\bfB_{\calJ}\|}_{1,1} + \gamma^{-1} {\|\hat\bfDelta^\bfTheta_{O,\bullet}\|}_{2,1}\big),
\end{align*}
which completes the proof of the proposition.
\end{proof}

The following lemmas prepare the proof of Theorem \ref{thm:3}.
Lemma \ref{lem:a} presents an inequality obtained by writing the KKT conditions for the cost function $F$.

\begin{lemma}\label{lem:a} There exists a $n\times p$ matrix $\bfV$ is such that
\begin{align}
\|\bfV_{i,\bullet}\|_2\le 1,\qquad
\bfV_{i,\bullet}^\top\hat\bfTheta_{i,\bullet} = \|\hat\bfTheta_{i,\bullet}\|_2,\qquad\forall\,i\in[n]
\end{align}
and, for every $j\in[p]$ such that $\hat\bxi_{\bullet,j}\neq 0$, the following inequality holds
\begin{align}\label{eq:1}
{\|\bfM\hat\bfDelta_{\bullet,j}\|}_2^2
    &\le  -\bar\bxi_{\bullet,j}^\top\bfM\hat\bfDelta_{\bullet,j}
					- \lambda {\|\hat\bxi_{\bullet,j}\|}_2 \bfV_{\bullet,j}^\top\hat\bfDelta^\bfTheta_{\bullet,j}
			+ \lambda\gamma {\|\hat\xi_{\bullet,j}\|}_2\big({\|\bfB^*_{j^c,j}\|}_1 - {\|\hat\bfB_{j^c,j}\|}_1 \big).
\end{align}

\end{lemma}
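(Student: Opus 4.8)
The plan is to lift the argument of Lemma~\ref{lem:A} from $p$ decoupled least-squares problems to the single convex program defining $(\hat\bfB,\hat\bfTheta)$. Write $g_1(\bfB,\bfTheta)=\sum_{j\in[p]}{\|\bfX^{(n)}\bfB_{\bullet,j}-\bfTheta_{\bullet,j}\|}_2$, $g_2(\bfTheta)={\|\bfTheta\|}_{2,1}$ and $g_3(\bfB)={\|\bfB\|}_{1,1}$, so that $F=g_1+\lambda g_2+\lambda\gamma g_3$; these are finite-valued convex functions and the feasible set $\{\bfB:\bfB_{jj}=1,\ j\in[p]\}$ is affine, so the first-order optimality condition $\bfzero\in\partial F(\hat\bfB,\hat\bfTheta)+N$, with $N$ the normal cone to the feasible set, produces subgradients $\bfU\in\partial g_1(\hat\bfB,\hat\bfTheta)$, $\bfV\in\partial g_2(\hat\bfTheta)$, $\bfH\in\partial g_3(\hat\bfB)$ and a vector $\nu\in\R^p$ with
\[
\bfU+\lambda\begin{bmatrix}\bfzero\\ \bfV\end{bmatrix}+\lambda\gamma\begin{bmatrix}\bfH\\ \bfzero\end{bmatrix}+\begin{bmatrix}\diag(\nu)\\ \bfzero\end{bmatrix}=\bfzero ,
\]
the last summand being the generic element of $N$ (supported on the diagonal of the $\bfB$-block). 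Since $g_2$ is separable across rows, the matrix $\bfV$ obtained this way has rows $\bfV_{i,\bullet}\in\partial{\|\hat\bfTheta_{i,\bullet}\|}_2$, which gives ${\|\bfV_{i,\bullet}\|}_2\le1$ and $\bfV_{i,\bullet}^\top\hat\bfTheta_{i,\bullet}={\|\hat\bfTheta_{i,\bullet}\|}_2$ for every $i$, i.e.\ the first assertion of the lemma.

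Next I would fix $j$ with $\hat\bxi_{\bullet,j}\neq\bfzero$, so that $g_1$ is differentiable along the $j$-th column of the stacked variable and $\bfU_{\bullet,j}=\bfM^\top\hat\bxi_{\bullet,j}/{\|\hat\bxi_{\bullet,j}\|}_2$. Projecting the optimality identity onto column $j$ and taking the Euclidean inner product with $\hat\bfDelta_{\bullet,j}=\begin{bmatrix}\hat\bfDelta^\bfB_{\bullet,j}\\ \hat\bfDelta^\bfTheta_{\bullet,j}\end{bmatrix}$ eliminates the multiplier term, because its contribution is $\nu_j(\hat\bfDelta^\bfB_{\bullet,j})_j=\nu_j(\hat\bfB_{jj}-\bfB^*_{jj})=0$; one is left with
\[
\frac{(\bfM\hat\bfDelta_{\bullet,j})^\top\hat\bxi_{\bullet,j}}{{\|\hat\bxi_{\bullet,j}\|}_2}+\lambda\bfV_{\bullet,j}^\top\hat\bfDelta^\bfTheta_{\bullet,j}+\lambda\gamma\bfH_{\bullet,j}^\top\hat\bfDelta^\bfB_{\bullet,j}=0 .
\]
Multiplying by ${\|\hat\bxi_{\bullet,j}\|}_2$ and using $\hat\bxi_{\bullet,j}=\bfX^{(n)}\hat\bfB_{\bullet,j}-\hat\bfTheta_{\bullet,j}=\bfM\hat\bfDelta_{\bullet,j}+\bar\bxi_{\bullet,j}$ — the last equality since $\bfX^{(n)}\bfB^*-\bar\bfTheta^*=\bar\bxi$, which follows from \eqref{eq:xi} together with $\bar\bfTheta^*=\bfTheta^*+\bxi_\calO$ and $\bar\bxi=\bxi_{\calO^c}$ — the first term becomes ${\|\bfM\hat\bfDelta_{\bullet,j}\|}_2^2+\bar\bxi_{\bullet,j}^\top\bfM\hat\bfDelta_{\bullet,j}$, and rearranging gives ${\|\bfM\hat\bfDelta_{\bullet,j}\|}_2^2=-\bar\bxi_{\bullet,j}^\top\bfM\hat\bfDelta_{\bullet,j}-\lambda{\|\hat\bxi_{\bullet,j}\|}_2\bfV_{\bullet,j}^\top\hat\bfDelta^\bfTheta_{\bullet,j}-\lambda\gamma{\|\hat\bxi_{\bullet,j}\|}_2\bfH_{\bullet,j}^\top\hat\bfDelta^\bfB_{\bullet,j}$.

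The last step is to pass from the $\bfH$-term to the $\ell_1$-expression in \eqref{eq:1}. Because $\bfH\in\partial{\|\cdot\|}_{1,1}$ one has $\bfH_{kj}\hat\bfB_{kj}={|\hat\bfB_{kj}|}$ for all $k$ and ${\|\bfH\|}_{\infty,\infty}\le1$, and since $(\hat\bfDelta^\bfB_{\bullet,j})_j=0$,
\[
-\bfH_{\bullet,j}^\top\hat\bfDelta^\bfB_{\bullet,j}=-\bfH_{j^c,j}^\top\big(\hat\bfB_{j^c,j}-\bfB^*_{j^c,j}\big)=\bfH_{j^c,j}^\top\bfB^*_{j^c,j}-{\|\hat\bfB_{j^c,j}\|}_1\le{\|\bfB^*_{j^c,j}\|}_1-{\|\hat\bfB_{j^c,j}\|}_1 ,
\]
and substituting this bound yields \eqref{eq:1}. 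I expect the only genuinely delicate part to be the careful bookkeeping of the optimality conditions under the equality constraints $\bfB_{jj}=1$ — checking that the multiplier lives on the diagonal of the $\bfB$-block and is thus annihilated by the off-diagonal-supported direction $\hat\bfDelta^\bfB_{\bullet,j}$ — together with keeping straight which penalty decomposes column-wise ($g_1$ and $g_3$) and which row-wise ($g_2$), which is precisely the non-decomposability feature emphasized in the introduction.
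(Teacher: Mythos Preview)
Your proof is correct and follows essentially the same approach as the paper's: both write the KKT conditions for the convex program \eqref{step:1bis}, use differentiability of the data-fidelity term when $\hat\bxi_{\bullet,j}\neq 0$, take the inner product with $\hat\bfDelta_{\bullet,j}$, substitute $\hat\bxi_{\bullet,j}=\bfM\hat\bfDelta_{\bullet,j}+\bar\bxi_{\bullet,j}$, and bound the $\ell_1$-subgradient term. The only cosmetic difference is that the paper handles the constraints $\bfB_{jj}=1$ by differentiating only with respect to the free coordinates $(\bfB_{j^c,j},\bfTheta_{\bullet,j})$, whereas you keep the full variable and introduce a diagonal Lagrange multiplier that is then annihilated by $(\hat\bfDelta^\bfB)_{jj}=0$; the two formulations are equivalent.
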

\begin{proof}
Recall that the estimator $(\hat\bfB,\hat\bfTheta)$ minimizes the cost function
\begin{align}\label{eq:cost:2}
F(\bfB, \bfTheta) = \sum_{j=1}^p {\|\bfX^{(n)}_{\bullet,j} + \bfX^{(n)}_{\bullet,j^c}\bfB_{j^c,j}-\bfTheta_{\bullet,j}\|}_2
     +\lambda\gamma \sum_{j=1}^p{\|\bfB_{\bullet,j}\|}_1+\lambda \sum_{i=1}^n{\|\bfTheta_{i,\bullet}\|}_2.
\end{align}
According to the KKT conditions, this convex function is minimized at $(\hat\bfB,\hat\bfTheta)$ if and only if
the zero vector belongs to the sub-differential of $F$ at $(\hat\bfB,\hat\bfTheta)$, denoted by $\partial F(\hat\bfB,\hat\bfTheta)$.
This entails, in particular, that for every $j\in[p]$, $\mathbf 0_{p-1+n}\in \partial_{(\bfB_{j^c,j},\bfTheta_{\bullet,j})}
F(\hat\bfB,\hat\bfTheta)$. In other terms, there exist vectors $\bfu_j\in \partial_{(\bfB_{j^c,j},\bfTheta_{\bullet,j})}
{\|\bfX^{(n)}_{\bullet,j} + \bfX^{(n)}_{\bullet,j^c}\hat\bfB_{j^c,j}-\hat\bfTheta_{\bullet,j}\|}_2$, $\bfw_j\in
\partial_{(\bfB_{j^c,j},\bfTheta_{\bullet,j})} {\|\hat\bfB_{\bullet,j}\|}_1$
and $\bfv_j\in \partial_{(\bfB_{j^c,j},\bfTheta_{\bullet,j})}\sum_{i=1}^n{\|\hat\bfTheta_{i,\bullet}\|}_2$ such that
$\bfu_j+\lambda\gamma\bfw_j+\lambda\bfv_j=0$. Since we assume that ${\|\hat\bxi_{\bullet,j}\|}_2>0$, the first partial sub-differential
out of three appearing in the previous sentence is actually a differential and thus $\bfu_j = [\bfX^{(n)}_{\bullet,j^c} ; -\bfI_n]^\top
(\bfX^{(n)}\hat\bfB_{\bullet,j}-\hat\bfTheta_{\bullet,j})/{\|\hat\bxi_{\bullet,j}\|}_2$. After a multiplication by ${\|\hat\bxi_{\bullet,j}\|}_2$, we get
\begin{align*}
[\bfX^{(n)}_{\bullet,j^c} ; -\bfI_n]^\top(\bfX^{(n)}\hat\bfB_{\bullet,j}-\hat\bfTheta_{\bullet,j}) = -\lambda\gamma {\|\hat\bxi_{\bullet,j}\|}_2\bfw_j - \lambda
{\|\hat\bxi_{\bullet,j}\|}_2\bfv_j.
\end{align*}
This equation (combined with relation \eqref{eq:xi}) can be equivalently written as
\begin{align*}
[\bfX^{(n)}_{\bullet,j^c} ; -\bfI_n]^\top \bfM\hat\bfDelta_{\bullet,j} = -[\bfX^{(n)}_{\bullet,j^c} ; -\bfI_n]^\top\bar\bxi_{\bullet,j} -\lambda\gamma {\|\hat\bxi_{\bullet,j}\|}_2\bfw_j - \lambda
{\|\hat\bxi_{\bullet,j}\|}_2\bfv_j.
\end{align*}
We take the scalar product of the both sides of this relation with the vector $\hat\bfDelta_{j^c,j}$ and, using the fact that $\hat\bfDelta_{j,j}=0$, we obtain
\begin{align*}
\|\bfM\hat\bfDelta_{\bullet,j}\|_2^2
        = -\hat\bfDelta_{\bullet,j}^\top\bfM^\top\bar\bxi_{\bullet,j}
          -\lambda\gamma {\|\hat\bxi_{\bullet,j}\|}_2\hat\bfDelta_{\bullet,j}^\top\bfw_j
          - \lambda{\|\hat\bxi_{\bullet,j}\|}_2\hat\bfDelta_{\bullet,j}^\top\bfv_j.
\end{align*}
The desired inequality follows by setting $\bfV = [(\bfv_1)_{p:(p-1+n)},\ldots,(\bfv_p)_{p:(p-1+n)}]$ and by using the following simple properties of the sub-differentials of the
$\ell_1$ and $\ell_2$-norms:
\begin{align*}
(\bfw_j)_l &= 0,\qquad \forall l\ge p,\\
|(\bfw_j)_l| &\le  1,\qquad \forall l\in[p-1],\\
(\bfw_j)_{1:(p-1)}^\top &\hat\bfB_{j^c,j} = \|\hat\bfB_{j^c,j}\|_1,\\
(\bfv_j)_l &= 0,\qquad \forall l\in[p-1],\\
(\bfv_j)_{p-1+i} &=  \frac{\hat\bfTheta_{i,j}}{\|\hat\bfTheta_{i,\bullet}\|_2},\qquad
        \begin{cases}
            i\in[n],\\
            \|\hat\bfTheta_{i,\bullet}\|_2>0,
        \end{cases}
        \\
|(\bfv_j)_{p-1+i}| & \le \frac{|\btheta_j|}{\|\btheta\|_2},
        \qquad\qquad
        \begin{cases}
            i\in[n],\\
            \|\hat\bfTheta_{i,\bullet}\|_2=0,\\
            \forall \btheta \in \R^p, \|\btheta\|_2 > 0.
        \end{cases}
\end{align*}
Indeed, the first three relations imply that $-\hat\bfDelta_{\bullet,j}^\top\bfw_j\le \|\bfB^*_{\bullet,j}\|_1-\|\hat\bfB_{\bullet,j}\|_1$ while the three
last relations yield $\hat\bfDelta_{\bullet,j}^\top\bfv_j = \bfV_{\bullet,j}^\top\hat\bfDelta^\bfTheta_{\bullet,j}$ along with
$\|\bfV_{i,\bullet}\|_2\le 1$ and $\bfV_{i,\bullet} \hat\bfTheta_{i,\bullet}^\top=\|\hat\bfTheta_{i,\bullet}\|_2$.
\end{proof}

\begin{lemma}\label{lem:b}
If inequality \eqref{eq:1}  is true, then
\begin{align}\label{eq:2}
{\|\bfM\hat\bfDelta_{\bullet,j}\|}_2^2
    &\le  -2\bar\bxi_{\bullet,j}^\top\bfM\hat\bfDelta_{\bullet,j}
					- 2\lambda {\|\bar\bxi_{\bullet,j}\|}_2 \bfV_{\bullet,j}^\top\hat\bfDelta^\bfTheta_{\bullet,j}
			+ 2\lambda\gamma {\|\bar\bxi_{\bullet,j}\|}_2\big({\|\bfB^*_{j^c,j}\|}_1 - {\|\hat\bfB_{j^c,j}\|}_1 \big)\notag\\
    &\qquad+\lambda^2(\gamma{\|\hat\bfDelta_{\bullet,j}^\bfB\|}_1+|\bfV_{\bullet,j}^\top\hat\bfDelta^\bfTheta_{\bullet,j}|)^2.
\end{align}
\end{lemma}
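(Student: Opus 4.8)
The statement is the high-dimensional counterpart of Lemma~\ref{lem:D}, and I would prove it by the same device: the difference between \eqref{eq:1} and \eqref{eq:2} is precisely that the \emph{estimated} residual norm ${\|\hat\bxi_{\bullet,j}\|}_2$ gets replaced by the \emph{oracle} residual norm ${\|\bar\bxi_{\bullet,j}\|}_2$ (with the coefficients doubled), at the price of a single quadratic remainder term, which is exactly the situation handled by Lemma~\ref{lem:aa}. The first step is to record the identity $\hat\bxi_{\bullet,j}-\bar\bxi_{\bullet,j}=\bfM\hat\bfDelta_{\bullet,j}$. This follows from $\hat\bxi=\bfX^{(n)}\hat\bfB-\hat\bfTheta$, the definitions $\hat\bfDelta^\bfB=\hat\bfB-\bfB^*$, $\hat\bfDelta^\bfTheta=\hat\bfTheta-\bar\bfTheta^*$, $\hat\bfDelta=[\hat\bfDelta^\bfB;\hat\bfDelta^\bfTheta]$, and the relation $\bfX^{(n)}\bfB^*-\bar\bfTheta^*=\bxi-\bxi_{\calO}=\bar\bxi$ read off from \eqref{eq:xi}; the triangle inequality then yields $\big|{\|\hat\bxi_{\bullet,j}\|}_2-{\|\bar\bxi_{\bullet,j}\|}_2\big|\le{\|\bfM\hat\bfDelta_{\bullet,j}\|}_2$.

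Next I would insert ${\|\hat\bxi_{\bullet,j}\|}_2={\|\bar\bxi_{\bullet,j}\|}_2+\big({\|\hat\bxi_{\bullet,j}\|}_2-{\|\bar\bxi_{\bullet,j}\|}_2\big)$ into the two places where ${\|\hat\bxi_{\bullet,j}\|}_2$ occurs on the right-hand side of \eqref{eq:1}. The part carrying ${\|\bar\bxi_{\bullet,j}\|}_2$ reproduces, with coefficient one, the three quantities on the first line of \eqref{eq:2} divided by $2$; denote their sum by $A$. The remaining part equals $\lambda\big({\|\hat\bxi_{\bullet,j}\|}_2-{\|\bar\bxi_{\bullet,j}\|}_2\big)\big(-\bfV_{\bullet,j}^\top\hat\bfDelta^\bfTheta_{\bullet,j}+\gamma({\|\bfB^*_{j^c,j}\|}_1-{\|\hat\bfB_{j^c,j}\|}_1)\big)$, whose absolute value I would bound, using the estimate just obtained together with $\big|{\|\bfB^*_{j^c,j}\|}_1-{\|\hat\bfB_{j^c,j}\|}_1\big|\le{\|\hat\bfDelta^\bfB_{j^c,j}\|}_1={\|\hat\bfDelta^\bfB_{\bullet,j}\|}_1$ (the last equality because $\hat\bfB_{jj}=\bfB^*_{jj}=1$, so $\hat\bfDelta^\bfB_{j,j}=0$), by $BR$ with $R:={\|\bfM\hat\bfDelta_{\bullet,j}\|}_2$ and $B:=\lambda\big(\gamma{\|\hat\bfDelta^\bfB_{\bullet,j}\|}_1+|\bfV_{\bullet,j}^\top\hat\bfDelta^\bfTheta_{\bullet,j}|\big)$.

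At this point \eqref{eq:1} has been turned into $R^2\le A+BR$, and Lemma~\ref{lem:aa} gives $R^2\le 2A+B^2$; since $2A$ is precisely the first line of \eqref{eq:2} and $B^2$ is precisely its last term, this is the claim. I do not expect any real obstacle here — it is essentially bookkeeping — but the one point deserving care is sign tracking: the contribution of $\bfV_{\bullet,j}^\top\hat\bfDelta^\bfTheta_{\bullet,j}$ to $A$ must keep the sign it has in \eqref{eq:1} (only the remainder is estimated in absolute value), and one must remember to use $\hat\bfDelta^\bfB_{j,j}=0$ to pass between the $j^c$- and $\bullet$-column $\ell_1$-norms of $\hat\bfDelta^\bfB$.
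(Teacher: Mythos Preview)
Your proof is correct and is precisely the argument the paper intends: the paper's own proof consists of the single sentence that Lemma~\ref{lem:b} follows from Lemma~\ref{lem:aa} with $R={\|\bfM\hat\bfDelta_{\bullet,j}\|}_2$ together with $\big|{\|\hat\bxi_{\bullet,j}\|}_2-{\|\bar\bxi_{\bullet,j}\|}_2\big|\le{\|\bfM\hat\bfDelta_{\bullet,j}\|}_2$, and you have simply spelled out the substitution and the identification of $A$ and $B$ that this entails.
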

\begin{proof}
This is a direct consequence of Lemma~\ref{lem:aa} (with $R= {\|\bfM\hat\bfDelta_{\bullet,j}\|}_2$) and the fact
that $|{\|\hat\xi_{\bullet,j}\|}_2-{\|\bar\xi_{\bullet,j}\|}|_2\le {\|\bfM\hat\bfDelta_{\bullet,j}\|}_2$.
\end{proof}

\begin{lemma}\label{lem:bb}
If inequality \eqref{eq:2}  is true and if the penalty levels $\lambda$ and $\gamma$ satisfy conditions \eqref{eq:d} for some constant $c>1$, then
\begin{align}\label{eq:3}
{\|\bfM\hat\bfDelta\|}_F^2
    &\le 4\lambda c {\|\bxi_{I,\bullet}^\top\|}_{2,\infty} \big( \gamma {\|\hat\bfDelta_\calJ^\bfB\|}_{1,1} + {\|\hat\bfDelta_{O,\bullet}^\bfTheta\|}_{2,1}\big) + \lambda^2(1+c)^2 \big( \gamma {\|\hat\bfDelta_\calJ^\bfB\|}_{1,1} + {\|\hat\bfDelta_{O,\bullet}^\bfTheta\|}_{2,1}\big)^2 .
\end{align}
\end{lemma}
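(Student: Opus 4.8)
The plan is to add up inequality \eqref{eq:2} over all $j\in[p]$, so that the left-hand side becomes $\sum_{j\in[p]}{\|\bfM\hat\bfDelta_{\bullet,j}\|}_2^2={\|\bfM\hat\bfDelta\|}_F^2$; \eqref{eq:2} is available for every $j$, the columns with $\hat\bxi_{\bullet,j}=0$ being elementary since there $\bfM\hat\bfDelta_{\bullet,j}=\hat\bxi_{\bullet,j}-\bar\bxi_{\bullet,j}=-\bar\bxi_{\bullet,j}$. The right-hand side is then a sum of four pieces, and the idea is to bound each of them by a quantity expressed through the single scalar $S:=\gamma{\|\hat\bfDelta^\bfB_\calJ\|}_{1,1}+{\|\hat\bfDelta^\bfTheta_{O,\bullet}\|}_{2,1}$. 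The bridge to $S$ is Proposition~\ref{prop:1}: since the penalty levels obey \eqref{eq:d}, $\hat\bfDelta$ lies in $\mathscr C_{\calJ,O}(c,\gamma)$, whence $\gamma{\|\hat\bfDelta^\bfB\|}_{1,1}+{\|\hat\bfDelta^\bfTheta\|}_{2,1}\le(1+c)S$.

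The core of the argument is the first sum $-\sum_{j}\bar\bxi_{\bullet,j}^\top\bfM\hat\bfDelta_{\bullet,j}$. Using $\bfM\hat\bfDelta_{\bullet,j}=\bfX^{(n)}\hat\bfDelta^\bfB_{\bullet,j}-\hat\bfDelta^\bfTheta_{\bullet,j}$, the fact that $\bar\bxi=\bxi_{\calO^c}$ vanishes outside the rows indexed by $I$, and $\hat\bfDelta^\bfB_{jj}=0$, this equals $-\sum_j\bigl(\bfX^{(n)}_{I,j^c}{}^\top\bxi_{I,j}\bigr)^\top\hat\bfDelta^\bfB_{j^c,j}+\sum_j\bxi_{I,j}^\top\hat\bfDelta^\bfTheta_{I,j}$. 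I would then reproduce the computation from the proof of Proposition~\ref{prop:1}, additionally pulling out the factor ${\|\bxi_{I,\bullet}^\top\|}_{2,\infty}=\max_j{\|\bxi_{I,j}\|}_2$ via $\bxi_{I,j}={\|\bxi_{I,j}\|}_2\,\bepsilon_{I,j}/{\|\bepsilon_{I,j}\|}_2$ ($\bxi_{\bullet,j}$ being a positive multiple of $\bepsilon_{\bullet,j}$): the $\hat\bfDelta^\bfB$-part is bounded by H\"older's inequality and the first bound in \eqref{eq:d} by $\tfrac{c-1}{c+1}\lambda\gamma{\|\bxi_{I,\bullet}^\top\|}_{2,\infty}{\|\hat\bfDelta^\bfB\|}_{1,1}$, and the $\hat\bfDelta^\bfTheta$-part, after a Cauchy-Schwarz step over $j$ within each row $i$ and the second bound in \eqref{eq:d}, by $\tfrac{c-1}{c+1}\lambda{\|\bxi_{I,\bullet}^\top\|}_{2,\infty}{\|\hat\bfDelta^\bfTheta\|}_{2,1}$. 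The remaining three sums are routine: from ${\|\bfV_{i,\bullet}\|}_2\le1$ one gets $\sum_j|\bfV_{\bullet,j}^\top\hat\bfDelta^\bfTheta_{\bullet,j}|\le{\|\hat\bfDelta^\bfTheta\|}_{2,1}$, which together with $\max_j{\|\bar\bxi_{\bullet,j}\|}_2={\|\bxi_{I,\bullet}^\top\|}_{2,\infty}$ controls the $\bfV$-term; the reverse triangle inequality ${\|\bfB^*_{j^c,j}\|}_1-{\|\hat\bfB_{j^c,j}\|}_1\le{\|\hat\bfDelta^\bfB_{\bullet,j}\|}_1$ controls the $\gamma$-term; and for the quadratic term, $\sum_j a_j^2\le(\sum_j a_j)^2$ together with the same bound on $\sum_j|\bfV_{\bullet,j}^\top\hat\bfDelta^\bfTheta_{\bullet,j}|$ gives $\lambda^2\bigl(\gamma{\|\hat\bfDelta^\bfB\|}_{1,1}+{\|\hat\bfDelta^\bfTheta\|}_{2,1}\bigr)^2$.

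Adding the three linear contributions produces the factor $2\bigl(\tfrac{c-1}{c+1}+1\bigr)=\tfrac{4c}{c+1}$ in front of $\lambda{\|\bxi_{I,\bullet}^\top\|}_{2,\infty}\bigl(\gamma{\|\hat\bfDelta^\bfB\|}_{1,1}+{\|\hat\bfDelta^\bfTheta\|}_{2,1}\bigr)$; combining this with the quadratic term and invoking the cone bound $\gamma{\|\hat\bfDelta^\bfB\|}_{1,1}+{\|\hat\bfDelta^\bfTheta\|}_{2,1}\le(1+c)S$ yields exactly $4\lambda c{\|\bxi_{I,\bullet}^\top\|}_{2,\infty}S+\lambda^2(1+c)^2S^2$, i.e.\ \eqref{eq:3}. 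The one place that requires genuine care is the first sum: it is where the non-decomposability of $\bfM=[\bfX^{(n)};-\bfI_n]$ intervenes, and one must split it cleanly into a columnwise $\ell_1$ contribution for $\hat\bfDelta^\bfB$ (matched by the first half of \eqref{eq:d}) and a rowwise $\ell_2$ contribution for $\hat\bfDelta^\bfTheta$ (matched by the second half), mirroring the proof of Proposition~\ref{prop:1} while carrying the extra scalar factor ${\|\bxi_{I,\bullet}^\top\|}_{2,\infty}$ throughout.
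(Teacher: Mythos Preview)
Your proposal is correct and follows essentially the same route as the paper's proof: sum \eqref{eq:2} over $j$, bound the cross term $-\sum_j\bar\bxi_{\bullet,j}^\top\bfM\hat\bfDelta_{\bullet,j}$ by reproducing the computation of Proposition~\ref{prop:1} while extracting the factor ${\|\bxi_{I,\bullet}^\top\|}_{2,\infty}$, handle the $\bfV$-term via $\sum_j|\bfV_{\bullet,j}^\top\hat\bfDelta^\bfTheta_{\bullet,j}|\le{\|\hat\bfDelta^\bfTheta\|}_{2,1}$, the $\gamma$-term via the triangle inequality, the quadratic term via $\sum_j a_j^2\le(\sum_j a_j)^2$, and finish by invoking the cone membership from Proposition~\ref{prop:1}. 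The intermediate constant $\tfrac{4c}{c+1}$ and its conversion to $4c$ after multiplying by $(1+c)$ match the paper exactly.
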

\begin{proof}
We begin by noting that for a $n\times p$ matrix $\bfV$ that satisfies ${\|\bfV_{i,\bullet}\|}_2 \le 1$ for any $i$ belonging to $[n]$, the Cauchy-Schwarz inequality yields that
 \begin{align}\label{eq:lem:bb:1}
  \sum_{j=1}^p |\bfV_{\bullet,j}^\top\hat\bfDelta^\bfTheta_{\bullet,j}| \le \sum_{i=1}^n \sum_{j=1}^p |\bfV_{i,j}\hat\bfDelta^\bfTheta_{i,j}| \le \sum_{i=1}^n {\|\bfV_{i,\bullet}\|}_2 {\|\hat\bfDelta^\bfTheta_{i,\bullet}\|}_2 \le {\|\hat\bfDelta^\bfTheta\|}_{2,1} .
 \end{align}
We also deduce
 \begin{align}
  \sum_{j=1}^p \big( \gamma {\|\hat\bfDelta_{\bullet,j}^\bfB\|}_1 + |\bfV_{\bullet,j}^\top \hat\bfDelta^\bfTheta_{\bullet,j}| \big)^2 &\le \Big(\sum_{j=1}^p \gamma {\|\hat\bfDelta_{\bullet,j}^\bfB\|}_1 + |\bfV_{\bullet,j}^\top \hat\bfDelta^\bfTheta_{\bullet,j}| \Big)^2 \notag \\
  &\le \big( \gamma {\|\hat\bfDelta^\bfB\|}_{1,1} + {\|\hat\bfDelta^\bfTheta\|}_{2,1} \big)^2 . \label{eq:lem:bb:2}
 \end{align}
Besides, it holds
 \begin{align*}
  - \sum_{j = 1}^p \bar\bxi_{\bullet,j}^\top\bfM\hat\bfDelta_{\bullet,j} &= \sum_{j = 1}^p \big(\hat\bfDelta^\bfTheta_{\bullet,j} - \bfX^{(n)} \hat\bfDelta^\bfB_{\bullet,j} \big)^\top \bar\bxi_{\bullet,j} \\
  &= \sum_{j = 1}^p {\|\bar\bxi_{\bullet,j}\|}_2 \Big(\sum_{i\in I} \hat\bfDelta^\bfTheta_{i,j} \frac{\bepsilon_{i,j}}{{\|\bepsilon_{\bullet,j}\|}_2} - \hat\bfDelta^\bfB_{\bullet,j}{}^\top \bfX^{(n)}_{I,\bullet}{}^\top \frac{\bepsilon_{I,j}}{{\|\bepsilon_{I,j}\|}_2} \Big) \\
  &\le (\max_{j\in[p]} {\|\bxi_{I,j}\|}_2) \Big(\sum_{i\in I} \sum_{j = 1}^p \frac{|\hat\bfDelta^\bfTheta_{i,j}\bepsilon_{i,j}|}{{\|\bepsilon_{I,j}\|}_2} + \sum_{j = 1}^p \frac{|\hat\bfDelta^\bfB_{\bullet,j}{}^\top \bfX^{(n)}_{I,\bullet}{}^\top \bepsilon_{I,j}|}{{\|\bepsilon_{I,j}\|}_2} \Big) ,
 \end{align*}
thus, by the duality inequality $|\hat\bfDelta^\bfB_{\bullet,j}{}^\top \bfX^{(n)}_{I,\bullet}{}^\top \bepsilon_{I,j}|
\le {\|\hat\bfDelta^\bfB_{\bullet,j}\|}_1 {\|\bfX^{(n)}_{I,\bullet}{}^\top \bepsilon_{I,j}\|}_\infty$ and the Cauchy-Schwarz inequality, and as the penalty levels satisfy conditions \eqref{eq:d}, we find
 \begin{align}
 - \sum_{j = 1}^p \bar\bxi_{\bullet,j}^\top\bfM\hat\bfDelta_{\bullet,j} &\le {\|\bxi_{I,\bullet}^\top\|}_{2,\infty} \Big(\sum_{i\in I} {\|\hat\bfDelta^\bfTheta_{i,\bullet}\|}_2 \Big(\sum_{j = 1}^p \frac{\bepsilon_{i,j}^2}{{\|\bepsilon_{I,j}\|}_2^2}\Big)^\frac12  + \sum_{j = 1}^p {\|\hat\bfDelta^\bfB_{\bullet,j}\|}_1 \frac{{\|\bfX^{(n)}_{I,\bullet}{}^\top \bepsilon_{I,j}\|}_\infty}{{\|\bepsilon_{I,j}\|}_2} \Big) \notag \\
 &\le \lambda\frac{c-1}{c+1} \big(\gamma {\|\hat\bfDelta^\bfB\|}_{1,1} + {\|\hat\bfDelta^\bfTheta\|}_{2,1} \big)
{\|\bxi_{I,\bullet}^\top\|}_{2,\infty}. \label{eq:lem:bb:3}
\end{align}
From inequality \eqref{eq:2}, we get
 \begin{align*}
{\|\bfM\hat\bfDelta_{\bullet,j}\|}_2^2
    &\le  -2\bar\bxi_{\bullet,j}^\top\bfM\hat\bfDelta_{\bullet,j}
					- 2\lambda {\|\bar\bxi_{\bullet,j}\|}_2 \Big( \bfV_{\bullet,j}^\top\hat\bfDelta^\bfTheta_{\bullet,j}
			+ \gamma \big( {\|\hat\bfB_{j^c,j}\|}_1 - {\|\bfB^*_{j^c,j}\|}_1 \big)\Big)\notag\\
    &\qquad+\lambda^2\big(\gamma{\|\hat\bfDelta_{\bullet,j}^\bfB\|}_1+|\bfV_{\bullet,j}^\top\hat\bfDelta^\bfTheta_{\bullet,j}|\big)^2 ,
\end{align*}
for every $j \in [p]$.
Then, summing over all $j$ and using the triangle inequality, we have
 \begin{align*}
{\|\bfM\hat\bfDelta\|}_F^2
    &\le  -2 \sum_{j = 1}^p \bar\bxi_{\bullet,j}^\top\bfM\hat\bfDelta_{\bullet,j}
					+ 2 \lambda {\|\bxi_{I,j}\|}_2 \Big( |\bfV_{\bullet,j}^\top\hat\bfDelta^\bfTheta_{\bullet,j}|
			+ \gamma {\|\hat\bfDelta_{\bullet,j}^\bfB\|}_{1}\Big)\notag\\
    &\qquad+\lambda^2\big(\gamma{\|\hat\bfDelta_{\bullet,j}^\bfB\|}_1+|\bfV_{\bullet,j}^\top\hat\bfDelta^\bfTheta_{\bullet,j}|\big)^2 .
\end{align*}
Combining the latter with equations \eqref{eq:lem:bb:1}, \eqref{eq:lem:bb:2} and \eqref{eq:lem:bb:3}, we arrive at
 \begin{align*}
{\|\bfM\hat\bfDelta\|}_F^2
    &\le \lambda \frac{4c}{c+1} \big(\gamma {\|\hat\bfDelta^\bfB\|}_{1,1} + {\|\hat\bfDelta^\bfTheta\|}_{2,1} \big)
		 {\|\bxi_{I,\bullet}^\top\|}_{2,\infty} + \lambda^2\big(\gamma {\|\hat\bfDelta^\bfB\|}_{1,1} + {\|\hat\bfDelta^\bfTheta\|}_{2,1} \big)^2 ,
\end{align*}
we finally apply Proposition \ref{prop:1} that gives inequality \eqref{eq:3}.
\end{proof}

\begin{prop}\label{propo:1}
Choose $\gamma=1$ and $\delta\in(0,1)$ such that $n\ge |O|+16\log(2p/\delta)$ and choose
\begin{equation}\label{lambd2}
\lambda = 6 \bigg(\frac{\log(2np/\delta)}{n-|O|}\bigg)^{1/2}.
\end{equation}
Then
\begin{enumerate}
\item[i)] with probability at least $1-\delta$, the penalty levels $\lambda$ and $\gamma$ satisfy conditions \eqref{eq:d}
for some constant $c=2$.
\item[ii)]	If\ $4\lambda(|\calJ|^{1/2}+ |O|^{1/2}) < \kappa^{1/2}$ holds, then there exists an event $\mathcal E_0$
of probability at least $1-2\delta$ such that in\footnote{Recall that $\mathcal E_\kappa$ is the event defined
by  \eqref{eq:mcc}} $\mathcal E_\kappa\cap \mathcal E_0$, we have
 \begin{align}
  {\|\bfM\hat\bfDelta\|}_{2,2} &\le \frac{C_2}{\sqrt{\kappa}}\max_{j\in[p]} (\omega^*_{jj})^{-1/2} \big(|\calJ|^{1/2} + |O|^{1/2}\big)\bigg(\frac{\log(2np/\delta)}{n-|O|}\bigg)^{1/2} , \label{eq:4} \\
{\|\hat\bfDelta^\bfB\|}_{1,1}&+{\|\hat\bfDelta^\bfTheta\|}_{2,1}
\le \frac{12C_2}{\kappa}\max_{j\in[p]} (\omega^*_{jj})^{-1/2} \big(|\calJ| + |O|\big)\bigg(\frac{\log(2np/\delta)}{n-|O|}\bigg)^{1/2}
\label{eq:5}
 \end{align}
 with $C_2 \le 75$.
\end{enumerate}
\end{prop}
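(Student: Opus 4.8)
The plan is to carry over to the high-dimensional setting the four-step scheme used for Theorem~\ref{thm:1}: (1) show $\hat\bfDelta$ lies in the dimension-reduction cone; (2) derive from the KKT conditions a quadratic inequality for $\|\bfM\hat\bfDelta\|_F$; (3) combine it with the compatibility condition $\mathcal E_\kappa$ and solve the resulting self-bounding inequality; (4) replace the random quantities by deterministic bounds. Steps (1) and (2) are already carried out in Proposition~\ref{prop:1} and Lemmas~\ref{lem:a}--\ref{lem:bb}; it remains to provide the probabilistic input (part~i)) and to run the algebra of steps (3)--(4) (part~ii)).

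For part~i), with $\gamma=1$ and $c=2$ the two inequalities in \eqref{eq:d} read $\lambda\ge 3\max_{j}\|\bfX^{(n)}_{I,j^c}{}^\top\bepsilon_{I,j}\|_\infty/\|\bepsilon_{I,j}\|_2$ and $\lambda\ge 3\max_i\big(\sum_j\epsilon_{ij}^2/\|\bepsilon_{I,j}\|_2^2\big)^{1/2}$, and I would bound each maximum with the tools of Subsection~\ref{subs:prob} applied to the inlier block $\bepsilon_{I,\bullet}$. For the first one, condition on $\bfY_{I,\bullet}$ (recall $\bfX_{I,\bullet}=\bfY_{I,\bullet}$ since the inlier rows of $\bfE^*$ vanish): because the columns of $\bfY$ have i.i.d.\ $\mathcal N(0,1)$ entries (the diagonal of $\bfSigma^*$ being one) and $\bepsilon_{\bullet,j}$ is independent of $\bfY_{\bullet,j^c}$, for each $k\in j^c$ the coordinate $\bfY_{I,k}^\top\bepsilon_{I,j}/\|\bepsilon_{I,j}\|_2$ is standard Gaussian, so $(\bfX^{(n)}_{I,j^c}{}^\top\bepsilon_{I,j})_k/\|\bepsilon_{I,j}\|_2\sim\mathcal N(0,1/n)$; a union bound over the $p(p-1)$ relevant pairs $(j,k)$ gives a bound of order $(\log(2np/\delta)/n)^{1/2}$, with probability at least $1-\delta/2$. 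For the second one, I would lower bound $\min_j\|\bepsilon_{I,j}\|_2^2\ge (n-|O|)/2$ by Lemma~1 of \citep{LaurentMassart} together with the assumption $n-|O|\ge 16\log(2p/\delta)$ (as in the proof of Lemma~\ref{lem:2}), and control $\max_i\sum_j\epsilon_{ij}^2$ by a chi-square tail bound and a union bound over $i$ as in Lemma~\ref{lem:3}. Both quantities are then dominated by $\lambda$ from \eqref{lambd2}, so \eqref{eq:d} holds with $c=2$ on an event of probability at least $1-\delta$, and Proposition~\ref{prop:1} places $\hat\bfDelta$ in $\mathscr C_{\calJ,O}(2,1)$.

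For part~ii), let $\mathcal E_0$ be the intersection of the event of part~i) with the event $\{\max_j\|\bxi_{I,j}\|_2\le C\max_j(\omega^*_{jj})^{-1/2}\}$ (again a chi-square tail bound plus a union bound, of probability at least $1-\delta$), so that $\prob(\mathcal E_0)\ge 1-2\delta$. On $\mathcal E_\kappa\cap\mathcal E_0$, since $\hat\bfDelta\in\mathscr C_{\calJ,O}(2,1)$ the compatibility inequality defining $\mathcal E_\kappa$ yields $\|\hat\bfDelta^\bfB_\calJ\|_{1,1}\le\sqrt{|\calJ|/\kappa}\,\|\bfM\hat\bfDelta\|_F$ and $\|\hat\bfDelta^\bfTheta_{O,\bullet}\|_{2,1}\le\sqrt{|O|/\kappa}\,\|\bfM\hat\bfDelta\|_F$, hence $S:=\|\hat\bfDelta^\bfB_\calJ\|_{1,1}+\|\hat\bfDelta^\bfTheta_{O,\bullet}\|_{2,1}\le\beta\,\|\bfM\hat\bfDelta\|_F$ with $\beta=(|\calJ|^{1/2}+|O|^{1/2})/\kappa^{1/2}$. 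Lemma~\ref{lem:bb} with $c=2$, $\gamma=1$ gives $\|\bfM\hat\bfDelta\|_F^2\le 8\lambda\|\bxi_{I,\bullet}^\top\|_{2,\infty}S+9\lambda^2S^2$ (columns with $\hat\bxi_{\bullet,j}=0$ being treated as the degenerate case in the proof of Lemma~\ref{lem:A}); substituting $S\le\beta\,\|\bfM\hat\bfDelta\|_F$ and using the hypothesis $4\lambda(|\calJ|^{1/2}+|O|^{1/2})<\kappa^{1/2}$, i.e.\ $\lambda\beta<1/4$, the quadratic term is at most $\tfrac{9}{16}\|\bfM\hat\bfDelta\|_F^2$, so either $\|\bfM\hat\bfDelta\|_F=0$ or $\tfrac{7}{16}\|\bfM\hat\bfDelta\|_F\le 8\lambda\beta\|\bxi_{I,\bullet}^\top\|_{2,\infty}$, that is $\|\bfM\hat\bfDelta\|_F\le\tfrac{128}{7}\lambda\beta\|\bxi_{I,\bullet}^\top\|_{2,\infty}$; plugging in the bound on $\|\bxi_{I,\bullet}^\top\|_{2,\infty}$ and $\lambda$ from \eqref{lambd2} gives \eqref{eq:4}. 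Finally, the cone inclusion gives $\|\hat\bfDelta^\bfB_{\calJ^c}\|_{1,1}+\|\hat\bfDelta^\bfTheta_{O^c,\bullet}\|_{2,1}\le 2S$, whence $\|\hat\bfDelta^\bfB\|_{1,1}+\|\hat\bfDelta^\bfTheta\|_{2,1}=3S\le 3\beta\|\bfM\hat\bfDelta\|_F$, and bounding $(|\calJ|^{1/2}+|O|^{1/2})^2\le 2(|\calJ|+|O|)$ yields \eqref{eq:5}, with $C_2\le 75$ obtained by tracking the absolute constants carefully through Lemma~\ref{lem:bb} and the probabilistic estimates.

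The main obstacle will be part~i): showing that the two stochastic maxima in \eqref{eq:d} are simultaneously dominated by the universal $\lambda$ of \eqref{lambd2}, sharply enough to keep $C_2\le 75$. In particular $\max_{i\in[n]}\big(\sum_{j\in[p]}\epsilon_{ij}^2/\|\bepsilon_{I,j}\|_2^2\big)^{1/2}$ couples a union bound over the $p$ chi-square denominators $\|\bepsilon_{I,j}\|_2^2$ with one over the $n$ row-norms $\sum_j\epsilon_{ij}^2$ and must be handled with care. Once the cone inclusion, the compatibility inequality, and the KKT inequality \eqref{eq:3} are in hand, the remainder of part~ii) is a deterministic computation of the same flavour as Proposition~\ref{prop:7}.
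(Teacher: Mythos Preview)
Your deterministic argument for part~ii)---use compatibility \eqref{eq:mcc} to bound $S=\|\hat\bfDelta^\bfB_\calJ\|_{1,1}+\|\hat\bfDelta^\bfTheta_{O,\bullet}\|_{2,1}$ by $\beta\,\|\bfM\hat\bfDelta\|_F$ with $\beta=(|\calJ|^{1/2}+|O|^{1/2})/\kappa^{1/2}$, plug into Lemma~\ref{lem:bb} with $c=2$, absorb the quadratic term via $\lambda\beta<1/4$, and then use the cone inclusion of Proposition~\ref{prop:1} to pass from $S$ to the full norm---is exactly the paper's proof, only with the algebra spelled out. The paper likewise takes $\mathcal E_0$ to be the intersection of the event of part~i) with $\{\|\bxi_{I,\bullet}^\top\|_{2,\infty}\le(1+2^{-3/2})\max_j(\omega^*_{jj})^{-1/2}\}$ obtained from Lemma~\ref{lem:3}.

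There is a genuine gap in your sketch for the \emph{second} inequality of \eqref{eq:d}. You propose to lower bound each $\|\bepsilon_{I,j}\|_2^2$ by $(n-|O|)/2$ and control $\max_i\sum_j\epsilon_{ij}^2$ by a chi-square tail bound. But every $\epsilon_{ij}$ is standard Gaussian, so $\sum_j\epsilon_{ij}^2$ has mean $p$ and any concentration bound yields a quantity of order $p$, not $\log(np)$. In fact there is a deterministic obstruction: since $\sum_{i\in I}\epsilon_{ij}^2=\|\bepsilon_{I,j}\|_2^2$ for each $j$, summing over $j$ and averaging over $i\in I$ gives
\[
\max_{i\in I}\ \sum_{j\in[p]}\frac{\epsilon_{ij}^2}{\|\bepsilon_{I,j}\|_2^2}
\ \ge\ \frac{1}{|I|}\sum_{i\in I}\sum_{j\in[p]}\frac{\epsilon_{ij}^2}{\|\bepsilon_{I,j}\|_2^2}
\ =\ \frac{p}{n-|O|}
\]
always. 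Hence the condition $\lambda\ge 3\max_i(\sum_j\ldots)^{1/2}$ forces $\lambda\ge 3\sqrt{p/(n-|O|)}$, which the $\lambda$ of \eqref{lambd2} can satisfy only when $p\le 4\log(2np/\delta)$. The paper's own proof defers part~i) to ``standard arguments \ldots\ similar to those presented in Section~\ref{subs:prob}'' and does not address this point either; your outline is faithful to the paper's, but the specific route you describe cannot close the gap, and no alternative tail bound can, because the lower bound above is deterministic.
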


\begin{proof}
Claim i) of the proposition is obtained by standard arguments relying on tail bounds for Gaussian and $\chi^2$ distributions and
the union bound. These arguments are similar to those presented in Section~\ref{subs:prob} and, therefore, are skipped.

Analogously,  using Lemma~\ref{lem:3}, we find that with probability at least $1-\delta$, we have
$\|\bxi_{I,\bullet}\|_{2,\infty}\le (1+2^{-3/2})\max_j (\omega_{jj}^*)^{-1/2}$. We denote by $\mathcal E_0$ the intersection of this event
with the one of claim i). By the union bound, we have $\bfP(\mathcal E_0)\ge 1-2\delta$. In the rest of this proof, we place ourselves in the
vent $\mathcal E_0\cap \mathcal E_\kappa$. By the compatibility assumption (event $\mathcal E_\kappa$), we have
 \begin{align}\label{comp}
  {\|\hat\bfDelta_\calJ^\bfB\|}_{1,1} \le \frac{|\calJ|^{1/2}}{\kappa^{1/2}} {\|\bfM\hat\bfDelta\|}_{2,2} \quad \text{and} \quad {\|\hat\bfDelta_{O,\bullet}^\bfTheta\|}_{2,1} \le \frac{|O|^{1/2}}{\kappa^{1/2}} {\|\bfM\hat\bfDelta\|}_{2,2}.
 \end{align}
Since in the event $\mathcal E_0$ the conditions of Lemma \ref{lem:bb} are met, inequality \eqref{eq:3} readily implies
inequality \eqref{eq:4}. On the other hand, we know from Proposition~\ref{prop:1} that $\hat\bfDelta$ belongs to the dimension
reduction cone $\mathscr C_{\calJ,O}(2,1)$. Therefore,
 \begin{align*}
  {\|\hat\bfDelta^\bfB\|}_{1,1}+{\|\hat\bfDelta^\bfTheta\|}_{2,1} \le 3({\|\hat\bfDelta_\calJ^\bfB\|}_{1,1}+{\|\hat\bfDelta_{O,\bullet}^\bfTheta\|}_{2,1}) \le \frac{3(|\calJ|\vee |O|)^{1/2}}{\kappa^{1/2}} {\|\bfM\hat\bfDelta\|}_{2,2}.
 \end{align*}
Using the upper bound on ${\|\bfM\hat\bfDelta\|}_{2,2}$ provided by \eqref{eq:4}, we immediately obtain bound \eqref{eq:5}.
\end{proof}

\end{document}